\documentclass[a4paper,10pt,twoside]{article}

\usepackage[margin=1.5in, marginparwidth=1.2in]{geometry}
\geometry{a4paper,left=3.5cm,right=3.5cm,top=4cm,bottom=4cm}

\setlength{\parindent}{0mm}
\setlength{\parskip}{6pt}

\usepackage[numbers]{natbib}
\setlength{\bibsep}{0pt}

\usepackage[utf8]{inputenc}

\usepackage{amsmath}  
\usepackage{mathrsfs}
\usepackage{amsthm} 
\usepackage{amsfonts}
\usepackage{amssymb, latexsym}
\usepackage{mhequ}
\usepackage{verbatim}

\usepackage{kerkis}

\usepackage{graphicx}
\usepackage{color}
\usepackage{xcolor}
\usepackage{tikz}
\usetikzlibrary{calc,intersections,through,backgrounds}
\usetikzlibrary{decorations.pathreplacing}
\usetikzlibrary{shapes}

\usepackage{float} 
\usepackage{enumerate}

\usepackage[pdfauthor={}, pdftitle={}, pdftex]{hyperref}
\hypersetup{
  colorlinks=false,
  pdfborder={0 0 0}
  }

\usepackage{tocloft}
\setlength{\cftbeforesecskip}{0mm}

\setcounter{tocdepth}{1}

\usepackage{fancyhdr}

\usepackage[]{appendix}

\usepackage{hyperref}

\hypersetup{
    colorlinks=true,       
        linkcolor=blue,          
    citecolor=red,        
    filecolor=blue,     
    urlcolor=cyan,
}

\usepackage[colorinlistoftodos,prependcaption,color=yellow,textsize=tiny]{todonotes}

\definecolor{darkred}{rgb}{0.5,0.0,0.0}

\theoremstyle{plain}
\newtheorem{theorem}{Theorem}[section]
\newtheorem*{theorem*}{Theorem}

\newtheorem{corollary}[theorem]{Corollary}
\newtheorem{lemma}[theorem]{Lemma}

\theoremstyle{definition}
\newtheorem{definition}[theorem]{Definition}

\theoremstyle{remark}
\newtheorem{remark}[theorem]{Remark}

\theoremstyle{example}
\newtheorem{example}[theorem]{Example}

\numberwithin{equation}{section}

\title{Higher Order Fluctuation Expansions for Nonlinear Stochastic Heat Equations in Singular Limits}
\author{Benjamin Gess, Zhengyan Wu, Rangrang Zhang}
\date{}

\pagestyle{fancy}
\fancyhf{}
\fancyfoot[C]{\thepage}

\fancypagestyle{plain}{

  \fancyhf{}
  \fancyfoot[L]{\footnotesize \today}
  \fancyfoot[C]{\thepage}
}

\renewenvironment{abstract}
{
\begin{minipage}{.9\textwidth}\small\textbf{Abstract}.\noindent
}
{
\end{minipage}
}

\newenvironment{keywords}
{
\begin{minipage}{.9\textwidth}\small\textbf{Keywords}:\noindent
}
{
\end{minipage}
}

\newenvironment{msc}
{
\begin{minipage}{.9\textwidth}\small\textbf{MSC 2010}:\noindent
}
{
\end{minipage}
}

\begin{document}

\maketitle

\begin{center}
\begin{abstract}
Higher order fluctuation expansions for stochastic heat equations (SHE) with nonlinear, non-conservative and conservative noise are obtained. These Edgeworth-type expansions describe the asymptotic behavior of solutions in suitable joint scaling regimes  of small noise intensity ($\varepsilon\rightarrow0$) and diverging singularity ($\delta\rightarrow0$). The results include both the case of the SHE with regular and irregular diffusion coefficients.  In particular, this includes the correlated Dawson-Watanabe and Dean-Kawasaki SPDEs, as well as SPDEs corresponding to the Fleming-Viot and symmetric simple exclusion processes.
\end{abstract}	
\end{center}

\bigskip

\begin{keywords}
Small noise asymptotic expansions, Edgeworth expansion, conservative SPDEs, irregular coefficients, higher order fluctuation.
\end{keywords}

\smallskip

\begin{msc} 60H15 \end{msc}

\tableofcontents

\section{Introduction}
Nonlinear stochastic heat equations (SHE) with irregular diffusion coefficients appear as fluctuating continuum models for the density profiles of stochastic interacting particle systems. For example, motivated from the theory of fluctuating hydrodynamics (see \cite{LL87}, \cite{S12}) and the fluctuation-dissipation relation (see \cite[Appendix A.3]{GGLS}, \cite{Ottinger}), in \cite{GG}, the following SPDE with conservative noise for the symmetric simple exclusion process (SSEP) has been introduced
\begin{equation}\label{SHE0.0}
\partial_tu^{\varepsilon}=\Delta u^{\varepsilon}+\varepsilon^{\frac{1}{2}}\nabla\cdot(\sqrt{u^{\varepsilon}(1-u^{\varepsilon})}\xi^{\varepsilon}),
\end{equation}
where $\xi^{\varepsilon}$ is a noise that is white in time and correlated in space with correlation length $\varepsilon>0$. Therefore, we are typically facing simultaneous scaling limits of small noise intensity ($\varepsilon^{\frac{1}{2}} \to 0$) and noise $\xi^{\varepsilon}$ converging to space time white noise $\xi$.

In \cite{DFG}, it has been shown that the central limit fluctuations and large deviations of solutions to 
\begin{equation}\label{SHE0.0-1}
\partial_tu^{\varepsilon,\delta}=\Delta u^{\varepsilon,\delta}+\varepsilon^{\frac{1}{2}}\nabla\cdot(\sqrt{u^{\varepsilon,\delta}(1-u^{\varepsilon,\delta})}\circ\xi^{\delta}),
\end{equation}
in appropriate joint scaling limits $(\varepsilon,\delta(\varepsilon))\to (0,0)$ are identical to those of the SSEP. Precisely, in \cite{DFG} the following first order expansion in the noise intensity $\varepsilon^{\frac{1}{2}}$ is shown, 
\begin{equation}\label{fluctuaionansatz-spde-intro}
   u^{\varepsilon,\delta(\varepsilon)}=\bar{u} + \varepsilon^{\frac{1}{2}} \bar{u}^{1,\delta(\varepsilon)} + o(\varepsilon^{\frac{1}{2}}),
\end{equation}
where $\bar{u}$ solves the (deterministic) heat equation and $\bar{u}^{1,\delta(\varepsilon)} \to \bar{u}^1$ for a Gaussian $\bar u^1$. 

Analogously, the central limit theorem for the empirical density field $\pi^\varepsilon$ of the SSEP derived in \cite{RK} for $d\geq2$ can be interpreted as the first order expansion
\begin{equation}\label{fluctuaionansatz-intro}
\pi^\varepsilon=\bar{u}+\varepsilon^{\frac{1}{2}}\bar{u}^{1,\delta(\varepsilon)}+o(\varepsilon^{\frac{1}{2}}), 
\end{equation}
where $\varepsilon = \frac{1}{N}$ with $N$ being the particle number, and $\bar{u}^{1,\varepsilon} \to \bar u^1$ with the same Gaussian limit $\bar u^1$ as in \eqref{fluctuaionansatz-spde-intro}. As argued in \cite{DFG}, this implies that $u^{\varepsilon,\delta(\varepsilon)}$ offers an improved order of approximation of the SSEP $\pi^\varepsilon$ in the sense that 
\begin{equation}\label{fluctuaionansatz-2}
 d(\pi^\varepsilon,u^{\varepsilon,\delta(\varepsilon)}) = o(\varepsilon^{\frac{1}{2}}),
\end{equation}
compared to the hydrodynamic limit $d(\pi^\varepsilon,\bar{u}) \approx \varepsilon^{\frac{1}{2}}.$ For further details see Section \ref{ssec:applications} below. 

Going beyond the order of approximation $o(\varepsilon^{\frac{1}{2}})$ in \eqref{fluctuaionansatz-2} requires the derivation of higher order small noise expansions than \eqref{fluctuaionansatz-spde-intro} and \eqref{fluctuaionansatz-intro}. Motivated from this, in the present paper, we prove higher order small noise expansions for nonlinear SHEs, both with non-conservative noise
\begin{equation}\label{SHE02}
\partial_tu^{\varepsilon,\delta}=\Delta u^{\varepsilon,\delta}+\varepsilon^{\frac{1}{2}}G(u^{\varepsilon,\delta})\xi^{\delta},\ \ u^{\varepsilon,\delta}(0)=u_0, 
\end{equation}
and conservative noise\footnote{With a small abuse of notation,
 $\xi^{\delta}$ in equation (\ref{SHE02}) refers to a scalar Gaussian field, while $\xi^{\delta}$ in equation (\ref{SHE01}) represents $d$-dimensional vector-valued Gaussian field.}
\begin{equation}\label{SHE01}
\partial_tu^{\varepsilon,\delta}=\Delta u^{\varepsilon,\delta}+\varepsilon^{\frac{1}{2}}\nabla\cdot(G(u^{\varepsilon,\delta})\xi^{\delta}),\ \ u^{\varepsilon,\delta}(0)=u_0,
\end{equation}
where $\xi^{\delta}=\xi\ast\eta_{\delta}$ is white in time and correlated in space, with $\eta_{\delta}$ specified in (\ref{etadelta-1}) below.

 Notably, with an eye on SPDEs arising from fluctuations in conservative and non-conservative particle systems, we include the case of irregular diffusion coefficients $G$, see, for example \eqref{SHE0.0-1}. Since fluctuation expansions rely on expansions of the nonlinearities in \eqref{SHE02}, resp.~\eqref{SHE01}, this causes additional challenges in the proof.

In a suitable scaling regime $(\varepsilon,\delta(\varepsilon))$ with $\delta(\varepsilon)\rightarrow0$ as $\varepsilon\rightarrow0$,
the higher order fluctuation (expansion) for (\ref{SHE02}) and (\ref{SHE01}) can be written in the unified form
\begin{equation}\label{expansion-2}
u^{\varepsilon,\delta(\varepsilon)}=\bar{u}^{0,\delta(\varepsilon)}+\varepsilon^{\frac{1}{2}}\bar{u}^{1,\delta(\varepsilon)}+\varepsilon^\frac{2}{2}\bar{u}^{2,\delta(\varepsilon)}
+\dots+\varepsilon^\frac{n}{2}\bar{u}^{n,\delta(\varepsilon)}+o(\varepsilon^\frac{n}{2}),
\end{equation}
where $\bar{u}^{0,\delta}$ is a solution to the (deterministic) heat equation \footnote{While $\bar{u}^{0,\delta}$ is independent of $\delta$ we choose this notation for the sake of consistency.}
\begin{equation}\label{HE}
  \partial_t\bar{u}^{0,\delta}=\Delta\bar{u}^{0,\delta},\ \ \bar{u}^{0,\delta}(0)=u_0,  
\end{equation}
and the coefficients $\bar{u}^{k,\delta}$, $k\geq 1$ satisfy separate equations in the case of non-conservative and conservative noise: For the case of non-conservative noise, $\bar{u}^{k,\delta}$ is iteratively defined as the mild solution to 
\begin{equation}\label{coe}
\partial_t\bar{u}^{k,\delta}=\Delta\bar{u}^{k,\delta}+\Big[\sum_{l=0}^{k-1}\frac{1}{l!}G^{(l)}(\bar{u}^{0,\delta})\mathcal{J}^{\delta}(k,l)\Big]\xi^{\delta},\quad
\bar{u}^{k,\delta}(0)=0,
\end{equation}
where $k\geq1$, $G^{(l)}(\cdot),l\in\mathbb{N}$ is the $l-$th derivative of $G$, $G^{(0)}(\cdot)=G$, and $\mathcal{J}^{\delta}(k,l)$ is given by
\begin{equation}\label{Jkl}
\mathcal{J}^{\delta}(k,l)=\sum_{(q_1,\dots,q_{k-l})\in\Lambda(k,l)}(\frac{l!}{q_1!\dots q_{k-l}!})\prod_{1\leq i\leq k-l}(\bar{u}^{i,\delta})^{q_i}.
\end{equation}
For $d=1$, we use the convention $u^{\varepsilon}=u^{\varepsilon,0}$, $\bar{u}^k=\bar{u}^{k,0}$.

Here, $\Lambda(k,l), k>l$ is the set of all integer solutions $(q_1, \dots, q_{k-l})$, $q_i\geq0$, $i=1,\dots,k-l$ satisfying
\begin{eqnarray}
\left\{
  \begin{array}{ll}\label{inteq}
   &q_1+q_2+\dots+q_{k-l}=l,\\
   &q_1+2q_2+\dots+(k-l)q_{k-l}=k-1.
  \end{array}
\right.
\end{eqnarray}
Moreover, if $\Lambda(k,l)=\emptyset$, then we set $\mathcal{J}^{\delta}(k,l)=0$. For example, $\mathcal{J}^{\delta}(k,0)=0$, for $k\geq1$. 

For notational efficiency we define 
\begin{eqnarray}\label{Kdelta}
K_i(\delta,d)=\left\{
  \begin{array}{ll}
   CI_{\{d=1\}} + \log(1/\delta)I_{\{d=2\}}+\delta^{-d+2}I_{\{d\geq3\}} & {\rm{if}}\ i=1,\ d\geq1,\\
   \delta^{-d} &{\rm{if}}\ i=2,\ d\geq1,
  \end{array}
\right.
\end{eqnarray}
corresponding to the speed of blow up due to the singularity of \eqref{SHE02}, \eqref{SHE01} in the limit $\delta\to0$. Here, $i=1$ corresponds to  the non-conservative case, and $i=2$ to the conservative case, and $C>0$ is a constant.  

\begin{theorem}[Non-conservative noise]\label{main-1}
Let $n\in\mathbb{N}=\{0,1,2,3,...\}$, $\varepsilon,\delta>0$. Assume that $G$ is regular with bounded $n$-th order derivatives, that is, $G^{(l)}(\cdot)\in C_b(\mathbb{R})$, for all $0\leq l\leq n$. Let $u^{\varepsilon,\delta}$ be the mild solution of (\ref{SHE02}) with initial data $u_0\in L^{\infty}(\mathbb{T}^d)$, $\bar{u}^{k,\delta}$ be the mild solution of (\ref{coe}), $k=1,...,n$. Assume that $\lim_{\varepsilon\rightarrow0}\varepsilon K_1(\delta(\varepsilon),d)^{(n+1)}=0.$ Then, for every $p\in[1,+\infty)$, there is a constant $C=C(G,p,n)$, such that, for every $\varepsilon>0$,  
\begin{align}\label{eq-22b}
\sup_{t\in[0,T],x\in\mathbb{T}^d}\mathbb{E}\Big|\Big(u^{\varepsilon,\delta(\varepsilon)}-\sum_{j=0}^{n}\varepsilon^\frac{j}{2}\bar{u}^{j,\delta(\varepsilon)}\Big)(t,x)\Big|^p\leq C\varepsilon^{\frac{np}{2}}\Big(\varepsilon K_1(\delta(\varepsilon),d)^{(n+1)}\Big)^{\frac{p}{2}}.
\end{align}

\end{theorem}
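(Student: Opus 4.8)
The plan is to proceed by induction on $n$ and estimate the remainder $R^{n,\varepsilon,\delta}:=u^{\varepsilon,\delta}-\sum_{j=0}^n\varepsilon^{j/2}\bar u^{j,\delta}$ directly via its mild formulation. Writing $P_t$ for the heat semigroup on $\mathbb{T}^d$, the solution $u^{\varepsilon,\delta}$ satisfies $u^{\varepsilon,\delta}(t)=P_tu_0+\varepsilon^{1/2}\int_0^t P_{t-s}\big(G(u^{\varepsilon,\delta}(s))\,\dd W^\delta_s\big)$, while each $\bar u^{k,\delta}$ satisfies the corresponding linear mild equation coming from \eqref{coe}. Subtracting, the key is to Taylor-expand $G(u^{\varepsilon,\delta})$ around $\bar u^{0,\delta}$ to order $n$: one writes $G(u^{\varepsilon,\delta})=\sum_{l=0}^{n}\frac{1}{l!}G^{(l)}(\bar u^{0,\delta})\big(u^{\varepsilon,\delta}-\bar u^{0,\delta}\big)^l+\text{(Taylor remainder)}$, then substitutes $u^{\varepsilon,\delta}-\bar u^{0,\delta}=\sum_{j=1}^n\varepsilon^{j/2}\bar u^{j,\delta}+R^{n,\varepsilon,\delta}$ and expands the powers via the multinomial theorem. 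The combinatorial bookkeeping is exactly what the sets $\Lambda(k,l)$ and the quantities $\mathcal{J}^\delta(k,l)$ in \eqref{Jkl}–\eqref{inteq} are designed to encode: collecting the coefficient of $\varepsilon^{k/2}$ for $k\le n$ reproduces precisely the driving term of \eqref{coe}, so those contributions cancel against $\sum_{k=1}^n\varepsilon^{k/2}\bar u^{k,\delta}$, and what survives in the stochastic integral for $R^{n,\varepsilon,\delta}$ is (i) terms of order $\varepsilon^{(n+1)/2}$ and higher built only from the lower-order profiles $\bar u^{j,\delta}$, $j\le n$, (ii) terms linear in $R^{n,\varepsilon,\delta}$ with bounded coefficients $G^{(l)}(\bar u^{0,\delta})$, (iii) genuinely nonlinear terms containing $R^{n,\varepsilon,\delta}$ to powers $\ge 2$, and (iv) the Taylor remainder, controlled by $\|G^{(n+1)}\|_\infty$ (or, in the regular case with only $n$ bounded derivatives, by a first-order Taylor estimate on $G^{(n)}$) times $|u^{\varepsilon,\delta}-\bar u^{0,\delta}|^{n+1}$.

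Next I would run a Burkholder–Davis–Gundy / factorization-lemma estimate on the mild stochastic convolution. The crucial input is the covariance bound for $\xi^\delta=\xi*\eta_\delta$: a stochastic convolution $\int_0^t P_{t-s}\big(f(s)\,\dd W^\delta_s\big)$ has $p$-th moment bounded, via BDG and the kernel estimate for $\int_0^t\|P_{t-s}(\,\cdot\,\eta_\delta)\|^2\,\dd s$, by $C\,K_1(\delta,d)^{p/2}\sup_{s,x}\mathbb{E}|f(s,x)|^p$ — this is where the blow-up speed $K_1(\delta,d)$ of \eqref{Kdelta} enters, and I expect this convolution/covariance estimate to already be available from earlier in the paper (it is the $i=1$ case). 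Applying this to the stochastic integral defining $R^{n,\varepsilon,\delta}$, denoting $\Phi^n(t):=\sup_{s\le t,x}\mathbb{E}|R^{n,\varepsilon,\delta}(s,x)|^p$, one obtains a Gronwall-type inequality of the schematic form
\begin{equation*}
\Phi^n(t)\le C K_1(\delta,d)^{p/2}\int_0^t\Big(\Phi^n(s)+\sum_{m\ge 2}(\text{higher powers of }\Phi^n(s))+\varepsilon^{(n+1)p/2}\,\Theta^{n,\delta}\Big)\,\dd s,
\end{equation*}
where $\Theta^{n,\delta}$ bundles the $p$-th moments of the lower-order profiles $\bar u^{j,\delta}$ and of the Taylor remainder; these profile moments are themselves controlled by (products of) $K_1(\delta,d)$ to the appropriate power using the same convolution estimate recursively, together with $L^\infty$ bounds on $u^{\varepsilon,\delta}$ and $\bar u^{0,\delta}$.

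The main obstacle is closing this estimate with the correct exponent on the right-hand side, namely $\varepsilon^{np/2}\big(\varepsilon K_1(\delta,d)^{n+1}\big)^{p/2}$ rather than something weaker. This requires two things done carefully: first, the \emph{a priori} smallness of $R^{n,\varepsilon,\delta}$ — by the induction hypothesis $R^{n-1,\varepsilon,\delta}$ is small, and $R^{n,\varepsilon,\delta}=R^{n-1,\varepsilon,\delta}-\varepsilon^{n/2}\bar u^{n,\delta}$ is of order $\varepsilon^{n/2}$ times something going to zero, which lets one absorb the superlinear-in-$R^{n,\varepsilon,\delta}$ terms into the linear one once $\varepsilon$ is small (using the hypothesis $\varepsilon K_1(\delta(\varepsilon),d)^{n+1}\to 0$ to guarantee the relevant prefactors stay bounded); second, tracking that the Taylor remainder term $\|G^{(n)}\|_\infty$-type contribution indeed produces the power $|u^{\varepsilon,\delta}-\bar u^{0,\delta}|^{n+1}$, whose leading piece after expansion is $\varepsilon^{(n+1)/2}$ times a bounded-moment profile combination — giving exactly the claimed order. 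After the superlinear terms are absorbed, Gronwall's lemma applied to $\Phi^n$ (with the $K_1(\delta,d)$-dependent constant, which is harmless since it appears under a bounded combination) yields \eqref{eq-22b}. The base case $n=0$ is the standard well-posedness/moment estimate $\sup_{t,x}\mathbb{E}|u^{\varepsilon,\delta}-\bar u^{0,\delta}|^p\le C(\varepsilon K_1(\delta,d))^{p/2}$, again a direct BDG-plus-Gronwall argument.
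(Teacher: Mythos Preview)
Your overall strategy---induction on $n$, Taylor-expand $G$ around $\bar u^{0,\delta}$, use the multinomial combinatorics encoded by $\Lambda(k,l)$ and $\mathcal{J}^\delta(k,l)$ to cancel against the $\bar u^{k,\delta}$, then apply the pointwise stochastic-convolution estimate---matches the paper's approach closely. However, the Gronwall step as you wrote it does not close, and this is not a cosmetic issue.

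You correctly state the convolution estimate as $\sup_{t,x}\mathbb{E}|\cdot|^p \le C K_1(\delta,d)^{p/2}\sup_{s,x}\mathbb{E}|f(s,x)|^p$, with \emph{no time integral}; but then you write a Gronwall inequality $\Phi^n(t)\le C K_1(\delta,d)^{p/2}\int_0^t(\Phi^n(s)+\ldots)\,\dd s$. These two statements are inconsistent. Worse, if one did have such an inequality with coefficient $K_1(\delta,d)^{p/2}$, Gronwall would produce a factor $\exp\big(C K_1(\delta,d)^{p/2}T\big)$, which diverges as $\delta(\varepsilon)\to 0$ for $d\ge 2$ and destroys the claimed bound; your remark that the constant is ``harmless since it appears under a bounded combination'' is not justified. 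The paper avoids this entirely: in its decomposition of the diffusion coefficient of $w_k^{\varepsilon,\delta}:=\varepsilon^{-k/2}R^{k,\varepsilon,\delta}$, the ``linear'' piece is written as $G^{(1)}(\bar u^{0,\delta})\,w_{k-1}^{\varepsilon,\delta}$, i.e.\ in terms of the \emph{previous}-order remainder, which is then controlled directly by the induction hypothesis---so there is no self-referential term and no Gronwall at all. Equivalently, if you insist on keeping the self-referential term $\varepsilon^{1/2}G^{(1)}(\bar u^{0,\delta})R^{n,\varepsilon,\delta}$, the convolution estimate gives a prefactor $(\varepsilon K_1(\delta,d))^{p/2}$ (note the $\varepsilon$), which tends to zero in the assumed scaling regime and can be \emph{absorbed} into the left-hand side; but this is absorption, not Gronwall, and the $\varepsilon^{1/2}$ must be tracked explicitly.

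A smaller point: you expand $G$ to order $n$, producing a Taylor remainder of order $n+1$ that requires control of $G^{(n+1)}$, which is not in the hypothesis. The paper instead, at step $k$, expands only to order $k-1$, so the Lagrange remainder $\frac{1}{k!}G^{(k)}(\xi)(u^{\varepsilon,\delta}-\bar u^{0,\delta})^k$ uses only $G^{(k)}\in C_b$, and at the final step $k=n$ this matches the hypothesis exactly. Since the definition of $\bar u^{k,\delta}$ for $k\le n$ involves only $G^{(l)}$ with $l\le k-1\le n-1$, the $l=n$ term in your expansion is superfluous and should be dropped.
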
 

Informally, this result is optimal in the sense that the exponents in \eqref{eq-22b} are consistent with the optimal regularity of the stochastic heat equation: For example, for $n=0$, and  $w^{\varepsilon,\delta(\varepsilon)}_0:=u^{\varepsilon,\delta(\varepsilon)}-\bar{u}^{0,\delta(\varepsilon)}$ we have 
\begin{align*}
  dw^{\varepsilon,\delta(\varepsilon)}_0 
  &= \Delta w^{\varepsilon,\delta(\varepsilon)}_0+\varepsilon^\frac{1}{2} G(u^{\varepsilon,\delta(\varepsilon)})\xi^{\delta(\varepsilon)} 
  \approx \Delta w^{\varepsilon,\delta(\varepsilon)}_0+\varepsilon^\frac{1}{2} G(\bar u^{0})\xi^{\delta(\varepsilon)}.
\end{align*}
Hence, $w^{\varepsilon,\delta(\varepsilon)}_0$ is at best as good as the stochastic heat equation, that is, see Section \ref{sec-3},
 $$\mathbb{E}|w^{\varepsilon,\delta(\varepsilon)}_0(t,x)|^2 \approx \varepsilon K_1(\delta(\varepsilon),d(\varepsilon)).$$
Analogously, for $n=1$, and  $w^{\varepsilon,\delta(\varepsilon)}_1:=\varepsilon^{-\frac{1}{2}}(u^{\varepsilon,\delta(\varepsilon)}-\bar{u}^{0,\delta(\varepsilon)})-\bar{u}^{1,\delta}$ we have that 
\begin{align*}
\partial_tw^{\varepsilon,\delta(\varepsilon)}_1=&\Delta w^{\varepsilon,\delta(\varepsilon)}_1+(G(u^{\varepsilon,\delta(\varepsilon)})-G(\bar{u}^{0,\delta(\varepsilon)}))\xi^{\delta(\varepsilon)}
\approx&\Delta w^{\varepsilon,\delta(\varepsilon)}_1+G'(\bar{u}^{0,\delta(\varepsilon)})w^{\varepsilon,\delta(\varepsilon)}_0\xi^{\delta(\varepsilon)},
\end{align*}
and $w^{\varepsilon,\delta(\varepsilon)}_1$ is of the size of the solutions to the stochastic heat equation with noise coefficient of order $\mathbb{E}|w^{\varepsilon,\delta(\varepsilon)}_0(t,x)|^2 = \varepsilon K_1(\delta(\varepsilon),d(\varepsilon))$. This yields
$\mathbb{E}|w^{\varepsilon,\delta(\varepsilon)}_1(t,x)|^2 \approx \varepsilon K_1(\delta(\varepsilon),d(\varepsilon))^{2}$, which is consistent with the exponents in (\ref{eq-22b}).

  A key challenge in the proof is the singularity of the stochastic PDEs considered here:  For $\delta=0$, the non-conservative SHE with $d\geq2$ and the conservative SHE with $d\geq1$ are super-critical in the sense of singular SPDEs. Therefore, treating (\ref{expansion-2}) with $\delta(\varepsilon)=0$ is currently out of reach for two main reasons: (1) Neither methods from classical It\^o calculus nor the theory of regularity structures \cite{H14}, para-controlled distributions \cite{GIP15} are applicable to super-critical, singular SPDE. More precisely, due to the low regularity of space time white noise it is unclear how to give meaning to the terms `$G(u^{\varepsilon})\xi$' and `$\nabla\cdot(G(u^{\varepsilon})\xi)$'. (2) For $\delta=0$ it is unclear how to give meaning to the coefficients $\{\bar{u}^{k,\delta}\}_{k\geq2}$ on the righthand side of (\ref{expansion-2}), even in a renormalized sense.  As a consequence, at the current state of theory, the  spatial regularization $\xi_\delta$ in the joint scaling regimes $(\varepsilon,\delta(\varepsilon))$ encountered above appear to be necessary.
 
 This shifts the focus to the relative scaling regimes with respect to $(\varepsilon,\delta)$ that can be treated. In the case of nonconservative noise, $L^p$ norms of the mollified space-time white noise $\xi_\delta$ diverge like $\delta^{-d}$, leading to simplest relative scaling regime $\varepsilon = o(\delta^{-d})$. However, it should be possible to exploit the regularizing effect of the viscosity present in \eqref{SHE02} to relax this assumption. Achieving this requires to use simultaneously the regularization of the noise by mollification, and the regularization of the noise by the heat semigroup in setting of multiplicative noise, like
 \begin{equation}\label{sto-convolution-intro}
    du=\Delta udt+\varepsilon^{\frac{1}{2}}g(t,x)\xi_\delta.
 \end{equation}
 While the well-established theory of maximal $L^p$-regularity for stochastic convolutions can be employed to obtain optimal estimates for \eqref{sto-convolution-intro} with respect to \textit{either} the regularization by the heat semigroup or  the regularization by mollification, it does not seem to be applicable in order to simutaneously exploit both. This is resolved in the present work by moving to less standard pointwise estimates, as in \eqref{eq-22b}, and by deriving heat kernel estimates manually.
 
 A second challenge addressed in the proof results from the Taylor expansions of the diffusion coefficients $G$. These result in combinatorial challenges with respect to the various occurring indices, as they already becomes apparent in \eqref{Jkl}. 

For the case of conservative noise\footnote{With a slight abuse of notation, we use the same notation $u^{\varepsilon,\delta}, \bar{u}^{k,\delta}$  for the cases of non-conservative and  conservative noise throughout the paper.} and $k\geq1$, let $\bar{u}^{k,\delta}$ be the mild solution to
\begin{equation}\label{ccoe}
\partial_t\bar{u}^{k,\delta}=\Delta\bar{u}^{k,\delta}+\nabla\cdot\Big(\Big[\sum_{l=0}^{k-1}\frac{1}{l!}G^{(l)}(\bar{u}^{0,\delta})\mathcal{J}^{\delta}(k,l)\Big]\xi^{\delta}\Big),
\quad \bar{u}^{k,\delta}(0)=0,
\end{equation}
where $\mathcal{J}^{\delta}(k,l)$ is defined by (\ref{Jkl}). For $k=0$, $\bar{u}^{0,\delta}$ is defined by (\ref{HE}).

\begin{theorem}[Conservative noise]\label{main-2}
Let $d\geq1$, and $n\in\mathbb{N}$. Assume that $G$ is regular with bounded $n$-order derivatives, that is, $G^{(l)}(\cdot)\in C_b(\mathbb{R})$, for all $0\leq l\leq n$. Let $\delta>0$, $\varepsilon_0=2\delta^d\|G^{(1)}(\cdot)\|_{L^{\infty}(\mathbb{R})}^{-2}>0$. For every $\varepsilon\in(0,\varepsilon_0)$, let $u^{\varepsilon,\delta}$ be the mild solution of (\ref{SHE01}) with initial data $u_0\in L^{\infty}(\mathbb{T}^d)$, $\bar{u}^{k,\delta}$ be the mild solution of (\ref{ccoe}), $k=1,...,n$.  Assume that $(\varepsilon,\delta(\varepsilon))$ satisfies $\lim_{\varepsilon\rightarrow0}\varepsilon K_2(\delta(\varepsilon),d)^{(n+1)}=0$. Then, for every $p\in[1,+\infty)$, there is a constant $C=C(G,p,n)$, such that, for every $\varepsilon>0$, 
\begin{align}\label{eq-22c}
\mathbb{E}\Big\|u^{\varepsilon,\delta(\varepsilon)}-\sum_{j=0}^{n}\varepsilon^\frac{j}{2}\bar{u}^{j,\delta(\varepsilon)}\Big\|_{L^p([0,T]\times\mathbb{T}^d)}^p\leq C\varepsilon^{\frac{np}{2}}\Big(\varepsilon K_2(\delta(\varepsilon),d)^{(n+1)}\Big)^{\frac{p}{2}}.
\end{align}

\end{theorem}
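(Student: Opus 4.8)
Here is how I would prove Theorem~\ref{main-2}; the plan follows the scheme of Theorem~\ref{main-1}, but replaces the pointwise heat‑kernel bounds used there by estimates adapted to the conservative structure, in which the gradient produced by $\nabla\cdot$ is absorbed by the heat semigroup while the mollifier contributes the factor $K_2(\delta,d)=\delta^{-d}$.

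Write $v^{\varepsilon,\delta}_n:=\sum_{j=0}^n\varepsilon^{j/2}\bar u^{j,\delta}$ and $R^{\varepsilon,\delta}_n:=u^{\varepsilon,\delta}-v^{\varepsilon,\delta}_n$; the goal is $\sup_{t\le T}\mathbb{E}\|R^{\varepsilon,\delta(\varepsilon)}_n(t)\|_{L^p(\mathbb{T}^d)}^p\le C\varepsilon^{np/2}(\varepsilon K_2(\delta(\varepsilon),d)^{n+1})^{p/2}$, from which \eqref{eq-22c} follows by integrating in $t$. First I would derive the equation for $R^{\varepsilon,\delta}_n$: inserting $u^{\varepsilon,\delta}=v^{\varepsilon,\delta}_n+R^{\varepsilon,\delta}_n$ into \eqref{SHE01}, Taylor‑expanding $G$ around $\bar u^{0,\delta}$ to order $n$ (only the derivatives $G^{(l)}$, $l\le n$, all bounded, are used), and subtracting the coefficient equations \eqref{ccoe}. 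The combinatorial identity already exploited in the proof of Theorem~\ref{main-1} — that the $\varepsilon^{(k-1)/2}$‑terms of the expansion of $G(v^{\varepsilon,\delta}_n)$ reorganize precisely into $\sum_{l}\frac1{l!}G^{(l)}(\bar u^{0,\delta})\mathcal J^\delta(k,l)$ with $\mathcal J^\delta$, $\Lambda(k,l)$ as in \eqref{Jkl} — shows that $R^{\varepsilon,\delta}_n$ is the mild solution of
$$\partial_t R^{\varepsilon,\delta}_n=\Delta R^{\varepsilon,\delta}_n+\varepsilon^{1/2}\nabla\cdot\big(\Phi^{\varepsilon,\delta}_n\,\xi^\delta\big),\qquad R^{\varepsilon,\delta}_n(0)=0,$$
with $\Phi^{\varepsilon,\delta}_n=\big(\int_0^1 G'(v^{\varepsilon,\delta}_n+\theta R^{\varepsilon,\delta}_n)\,d\theta\big)R^{\varepsilon,\delta}_n+B^{\varepsilon,\delta}_n$, where $B^{\varepsilon,\delta}_n$ collects the $n$‑th Taylor remainder together with all terms of order $\ge\varepsilon^{n/2}$ produced by the multinomial expansion; it is an explicit polynomial in $\bar u^{0,\delta},\dots,\bar u^{n,\delta}$ with coefficients bounded in terms of $\|G^{(l)}\|_{L^\infty}$, $l\le n$. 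Using the a priori moment bounds $\sup_{s\le T}\mathbb{E}\|\bar u^{j,\delta}(s)\|_{L^q(\mathbb{T}^d)}^q\le C_q K_2(\delta,d)^{jq/2}$ for \eqref{ccoe} (obtained, as in the non‑conservative case, by an induction in $j$ together with the estimate of the next step) and Hölder's inequality, this yields $\sup_{s\le T}\mathbb{E}\|B^{\varepsilon,\delta}_n(s)\|_{L^p(\mathbb{T}^d)}^p\le C\,(\varepsilon K_2(\delta,d))^{np/2}$; the leading term is $\varepsilon^{n/2}(\bar u^{1,\delta})^n$. For $n=0$ no linearization is possible and one simply takes $\Phi^{\varepsilon,\delta}_0=G(u^{\varepsilon,\delta})$, $\|\Phi^{\varepsilon,\delta}_0\|_{L^\infty}\le\|G\|_{L^\infty}$.

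The second and central step is the estimate for the conservative stochastic convolution: for an adapted field $\Phi$ and $p\in[1,\infty)$, the mild solution $Z(t)=\varepsilon^{1/2}\int_0^t P_{t-s}\nabla\cdot(\Phi(s)\,\xi^\delta(s))\,ds$ satisfies
$$\sup_{t\le T}\mathbb{E}\|Z(t)\|_{L^p(\mathbb{T}^d)}^p\le C_p\,\big(\varepsilon K_2(\delta,d)\big)^{p/2}\,\sup_{s\le T}\mathbb{E}\|\Phi(s)\|_{L^p(\mathbb{T}^d)}^p.$$
I would prove this from the mild representation, commuting $P_{t-s}$ past $\nabla\cdot$, applying the Burkholder--Davis--Gundy inequality, and reducing matters (via Plancherel on $\mathbb{T}^d$) to the manual heat‑kernel bound $\int_0^t\|\nabla P_{t-s}f\|_{L^2(\mathbb{T}^d)}^2\,ds\le\tfrac12\|f\|_{L^2(\mathbb{T}^d)}^2$, which holds because $\int_0^t|k|^2 e^{-2|k|^2(t-s)}\,ds\le\tfrac12$ uniformly in the Fourier mode $k$, together with $\sum_{k}|(\eta_\delta* e_k)(x)|^2=\|\eta_\delta\|_{L^2(\mathbb{T}^d)}^2\sim K_2(\delta,d)$. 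This is precisely where the regularization by the heat semigroup (which neutralizes the gradient — without it \eqref{SHE01} is $\delta$‑supercritical for $d\ge1$) and the regularization by mollification (which supplies $K_2$) are exploited simultaneously; the time dependence of $\Phi$ is dealt with by interchanging $\mathbb{E}$ with the time integral and bounding $\mathbb{E}\|\Phi(s)\|^p_{L^p}$ by its supremum over $s$.

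Applying this estimate to $\Phi^{\varepsilon,\delta}_n=A^{\varepsilon,\delta}_n+B^{\varepsilon,\delta}_n$ produces, for the $B$‑part, a contribution $\le C(\varepsilon K_2)^{p/2}\cdot C(\varepsilon K_2)^{np/2}=C\varepsilon^{np/2}(\varepsilon K_2^{n+1})^{p/2}$, which is exactly the right‑hand side of \eqref{eq-22c}; for the $A$‑part, using $|A^{\varepsilon,\delta}_n|\le\|G'\|_{L^\infty}|R^{\varepsilon,\delta}_n|$, a contribution $\le C\,(\varepsilon K_2\|G'\|_{L^\infty}^2)^{p/2}\sup_{s\le T}\mathbb{E}\|R^{\varepsilon,\delta}_n(s)\|_{L^p(\mathbb{T}^d)}^p$. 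Here the hypothesis $\varepsilon<\varepsilon_0=2\delta^d\|G'\|_{L^\infty}^{-2}$, equivalently $\varepsilon K_2(\delta,d)\|G'\|_{L^\infty}^2<2$, makes this feedback term subcritical, so that it can be absorbed into the left‑hand side — after, if necessary, shrinking the admissible range of $\varepsilon$ by an absolute constant, or iterating the estimate over short time intervals via Gronwall — and one obtains $\sup_{t\le T}\mathbb{E}\|R^{\varepsilon,\delta(\varepsilon)}_n(t)\|_{L^p(\mathbb{T}^d)}^p\le C\varepsilon^{np/2}(\varepsilon K_2(\delta(\varepsilon),d)^{n+1})^{p/2}$; integrating in $t$ gives \eqref{eq-22c}. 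The assumption $\varepsilon K_2(\delta(\varepsilon),d)^{n+1}\to0$ enters only to ensure that the right‑hand side is a genuine error term and (since $K_2\ge1$) that the threshold $\varepsilon K_2\|G'\|^2_{L^\infty}<2$ is eventually met. The main obstacle is the estimate of the second step in the singular regime: because \eqref{SHE01} is supercritical at $\delta=0$ in every dimension, no bound that places the derivative of $\nabla\cdot$ on the mollifier or treats the noise as an $L^2(\mathbb{T}^d)$‑valued forcing can succeed — these yield the worse rate $\delta^{-d-2}$ or a logarithmic loss — so one must use the oscillation of $\nabla P_{t-s}$ through the manual heat‑kernel bound, and in the presence of a multiplicative, time‑dependent, and (in the applications) irregular coefficient $\Phi^{\varepsilon,\delta}_n$ whose $L^q(\mathbb{T}^d)$‑norms themselves diverge as $\delta\to0$. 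Balancing this against keeping the feedback $A^{\varepsilon,\delta}_n$ below $\varepsilon_0$, and carrying out the bookkeeping of the indices generated by the Taylor and multinomial expansions (the sets $\Lambda(k,l)$ and the terms $\mathcal J^\delta(k,l)$), is the technical heart of the argument.
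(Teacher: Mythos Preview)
Your overall strategy is close in spirit to the paper's, but there is a real gap in the ``central step'' --- the stochastic convolution bound you state with $\sup_{t\le T}$ on both sides. The argument you sketch (BDG, then ``via Plancherel on $\mathbb{T}^d$ reduce to the manual heat-kernel bound $\int_0^t\|\nabla P_{t-s}f\|_{L^2}^2\,ds\le\tfrac12\|f\|_{L^2}^2$'') is an $L^2$-in-space argument and does not give the pointwise-in-$x$ control needed after BDG for $p>2$: you are left with $\int_{\mathbb{T}^d}\mathbb{E}\big(\int_0^t\sum_k|(\nabla P_{t-s}(\Phi(s)\,\eta_\delta*e_k))(x)|^2\,ds\big)^{p/2}dx$, and Plancherel cannot be applied inside the $p/2$-power. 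In the non-conservative case (Lemma~\ref{maximalLp-noncons}) the analogous step works because the kernel $p(t-s,x-y_1)p(t-s,x-y_2)R_\delta(y_1-y_2)$ is nonnegative, so Minkowski in the time and space variables preserves the inequality; for $\nabla p\cdot\nabla p$ this nonnegativity fails. The paper therefore abandons the sup-in-time norm and uses stochastic maximal $L^p$-regularity (Lemma~\ref{gen-littlewoodpaley}, the generalized Littlewood--Paley inequality), which yields only the space-time bound $\mathbb{E}\|Z\|_{L^p([0,T]\times\mathbb{T}^d)}^p\le C(\varepsilon K_2)^{p/2}\mathbb{E}\|\Phi\|_{L^p([0,T]\times\mathbb{T}^d)}^p$ (Lemma~\ref{maximalLp-cons}); this is precisely why the theorem is stated in $L^p([0,T]\times\mathbb{T}^d)$ rather than with a supremum in $t$.

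There is also a structural difference worth noting. The paper never sets up a feedback term $A^{\varepsilon,\delta}_n\sim G'(\cdot)R^{\varepsilon,\delta}_n$ on the right-hand side. Instead it argues by induction on the order: by Lemma~\ref{sigman} the coefficient $\sigma^{\varepsilon,\delta}_k$ of $w^{\varepsilon,\delta}_k=\varepsilon^{-k/2}R^{\varepsilon,\delta}_k$ is decomposed (proof of Theorem~\ref{n-CLT}) into pieces $I_1,\dots,I_4$ and then $\mathcal K_1,\dots,\mathcal K_6$, each of which involves only the \emph{lower-order} remainders $w^{\varepsilon,\delta}_0,\dots,w^{\varepsilon,\delta}_{k-1}$ and the coefficients $\bar u^{j,\delta}$, all controlled by the induction hypothesis and Theorem~\ref{div-speed-cons}. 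This sidesteps the absorption step altogether --- which matters, since your feedback constant is only $C_p(\varepsilon K_2\|G'\|_\infty^2)^{p/2}<C_p\cdot 2^{p/2}$ under $\varepsilon<\varepsilon_0$, not $<1$, and your Gronwall-on-short-intervals fix does not help because the conservative convolution estimate gains no smallness from short time ($\int_0^t\alpha_k e^{-2\alpha_k s}\,ds\le\tfrac12$ uniformly in $t$).
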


Compared to the case of nonconservative noise, we establish the higher order fluctuations expansion for conservative SHE in the $L^p([0,T]\times\mathbb{T}^d)$-norm, as opposed to taking the supremum over $t$ and $x$ outside of the expectation as in \eqref{eq-22b}. The essential difference to the case of nonconservative noise is that the regularization offered by the heat semigroup is entirely used to compensate the gradient in front of the noise in \eqref{SHE01}, so that no interaction between the heat semigroup and the singularity in $\delta$ appears. For this reason, in the conservative case, stochastic maximal $L^p$-regularity (see \cite{krylovlp, JML}) can be used.

We next address the case of irregular coefficients $G$. In this case, we face two additional problems: (1) The well-posedness of (\ref{SHE02}) and (\ref{SHE01}) are challenging problems. In the case of non-conservative noise, this has been addressed in  \cite{LEA, ML, CLE}. In the case of conservative Stratonovich noise, a well-posedness theory has been developed in \cite{FG21}. In contrast, the case of conservative It\^o noise with irregular coefficients $G$, as considered in this work, remained an open problem. Due to the resulting It\^o correction terms, this case lacks a stochastic coercivity condition, leading to problems to even construct solutions. (2) higher order singular expansions rely on taking derivatives of coefficients. For the case of irregular diffusion coefficients,  this is impossible at the points of their irregularity.

To address these problems, we develop a new local in time well-posedness approach to \eqref{SHE01}: Notably, if the initial data of (\ref{SHE01}) is bounded away from the singularities of $G$, one expects that locally in time the solution stays away from the singularities as well. As long as this is the case, $G$ behaves as a Lipschitz continuous function, which ensures the local in time well-posedness of (\ref{SHE01}). A key part in making this idea rigorous is to show that the resulting time interval is nontrivial. This is achieved in the present work by deriving new regularity estimates on the solutions. More precisely, a novel $L^{\infty}([0,T]\times\mathbb{T}^d)$-estimate is obtained by Moser iteration. As a consequence, we obtain the local-in-time existence and uniqueness of a solution $(u^{\varepsilon,\delta},\tau^{\varepsilon,\delta}_{\gamma})$ to (\ref{SHE02}), resp.~(\ref{SHE01}).

For convenience, we define the extension 
\begin{equation}\label{extension-main}
u^{\varepsilon,\delta}(t)=u^{\varepsilon,\delta}(\tau^{\varepsilon,\delta}_{\gamma}),\ \  \text{if}\  t\in[\tau^{\varepsilon,\delta}_{\gamma},T],
\end{equation}
after the stopping time
\begin{equation}\begin{split}\label{stop-intro}
\tau^{\varepsilon,\delta}_{\gamma}:=\inf\Big\{t\in[0,T];~&{\rm{ess\sup}}_{x\in\mathbb{T}^d}u^{\varepsilon,\delta}(t,x)>{\rm{ess\sup}u_0}+\gamma,\\
&\text{or }{\rm{ess\inf}}_{x\in\mathbb{T}^d}u^{\varepsilon,\delta}(t,x)<{\rm{ess\inf}} u_0-\gamma\Big\}.
\end{split}\end{equation}

The following result provides the high order fluctuation expansion for the local in time solution $u^{\varepsilon,\delta}$.   

\begin{theorem}\label{sbm2}
Assume that Hypothesis H3 holds for $u_0$, $G$, for some $\gamma>0$. Let $n\in\mathbb{N}$, and $\varepsilon,\delta>0$. Consider the local in time solution $(u^{\varepsilon,\delta},\tau^{\varepsilon,\delta}_{\gamma})$ of  (\ref{SHE02}) (resp. (\ref{SHE01})), where $\tau^{\varepsilon,\delta}_{\gamma}$ is defined by (\ref{stop-intro}).  
Then we have the following high order fluctuation expansion. 
\begin{description}
\item[(i)] ({\rm{Non-conservative\ noise}})\quad Assume that 
\begin{equation}\label{localscale-3}
\varepsilon(\delta(\varepsilon)^{-d}+K_1(\delta(\varepsilon),d)^{(n+1)})\rightarrow0 \quad {\rm{as\ \varepsilon\rightarrow0}},
\end{equation}

then, for almost every $(t,x)\in[0,T)\times\mathbb{T}^d$,
\begin{align*}
\varepsilon^{-\frac{n}{2}}\Big(u^{\varepsilon,\delta(\varepsilon)}-\sum_{i=0}^{n}\varepsilon^{\frac{i}{2}}\bar{u}^{i,\delta(\varepsilon)}\Big)(t,x)\rightarrow0,
\end{align*}
in probability, as $\varepsilon\rightarrow0$.

  \item[(ii)] ({\rm{Conservative\ noise}})\quad Assume that  
\begin{equation}\label{localscale-4}
\varepsilon(\delta(\varepsilon)^{-d-2}+K_2(\delta(\varepsilon),d)^{(n+1)})\rightarrow0\quad {\rm{as\ \varepsilon\rightarrow0}},
\end{equation}
then, for every $p\in[1,+\infty)$,  
\begin{align*}
\Big\|\varepsilon^{-\frac{n}{2}}\Big(u^{\varepsilon,\delta(\varepsilon)}-\sum_{i=0}^{n}\varepsilon^{\frac{i}{2}}\bar{u}^{i,\delta(\varepsilon)}\Big)\Big\|_{L^p([0,T]\times\mathbb{T}^d)}\rightarrow0,
\end{align*}
in probability, as $\varepsilon\rightarrow0$.
\end{description}
\end{theorem}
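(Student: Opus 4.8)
The plan is to deduce Theorem~\ref{sbm2} from Theorems~\ref{main-1} and~\ref{main-2} by a localization argument: on the event $\{\tau^{\varepsilon,\delta(\varepsilon)}_\gamma = T\}$ the local solution coincides with the (globally defined) solution of an equation with a \emph{Lipschitz} coefficient $\tilde G$, obtained by modifying $G$ outside the $\gamma$-neighbourhood of $[\operatorname{ess\,inf}u_0,\operatorname{ess\,sup}u_0]$; since by Hypothesis~H3 the coefficient $G$ is smooth with bounded derivatives on that neighbourhood, one may choose $\tilde G\in C_b^n(\mathbb R)$ agreeing with $G$ there. The coefficients $\bar u^{k,\delta}$ defined by \eqref{coe}, resp.~\eqref{ccoe}, only involve $G^{(l)}(\bar u^{0,\delta})$, and $\bar u^{0,\delta}$ takes values in $[\operatorname{ess\,inf}u_0,\operatorname{ess\,sup}u_0]$ by the maximum principle for the heat equation, so they are unchanged if $G$ is replaced by $\tilde G$. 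Hence on $\{\tau^{\varepsilon,\delta(\varepsilon)}_\gamma=T\}$ the quantity $\varepsilon^{-n/2}(u^{\varepsilon,\delta(\varepsilon)}-\sum_{i=0}^n\varepsilon^{i/2}\bar u^{i,\delta(\varepsilon)})$ agrees with the corresponding quantity for the regular-coefficient problem, to which Theorem~\ref{main-1} (resp.~\ref{main-2}) applies under the assumed scaling $\varepsilon K_i(\delta(\varepsilon),d)^{(n+1)}\to 0$, giving convergence to $0$ in $L^p$ and hence in probability.

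It then remains to show $\Prob(\tau^{\varepsilon,\delta(\varepsilon)}_\gamma < T)\to 0$ as $\varepsilon\to 0$; the convergence in probability of the full expression follows by splitting according to this event and using that a sequence bounded in probability on a set of vanishing probability contributes negligibly. To control the stopping time I would use the $L^\infty([0,T]\times\mathbb T^d)$-regularity estimate obtained via Moser iteration (announced before the theorem) together with the difference estimate $u^{\varepsilon,\delta(\varepsilon)} - \bar u^{0,\delta(\varepsilon)} = w_0^{\varepsilon,\delta(\varepsilon)}$, whose size is of order $\varepsilon^{1/2} K_i(\delta(\varepsilon),d)^{1/2}$ in the appropriate norm. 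Since $\bar u^{0,\delta(\varepsilon)}$ stays within $[\operatorname{ess\,inf}u_0,\operatorname{ess\,sup}u_0]$, the solution exceeds the threshold $\gamma$ only if $\|w_0^{\varepsilon,\delta(\varepsilon)}\|_\infty > \gamma$ (or a suitable $L^p$-surrogate thereof), and by Chebyshev's inequality this has probability bounded by $\gamma^{-p}\,\Exp\|w_0^{\varepsilon,\delta(\varepsilon)}\|^p \lesssim \gamma^{-p}(\varepsilon\delta(\varepsilon)^{-d})^{p/2}$ in the non-conservative case, resp.\ $\gamma^{-p}(\varepsilon\delta(\varepsilon)^{-d-2})^{p/2}$ in the conservative case. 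This is precisely where the extra scaling conditions $\varepsilon\,\delta(\varepsilon)^{-d}\to0$ in \eqref{localscale-3} and $\varepsilon\,\delta(\varepsilon)^{-d-2}\to0$ in \eqref{localscale-4} enter, beyond the conditions $\varepsilon K_i(\delta(\varepsilon),d)^{(n+1)}\to0$ already needed for the expansion itself.

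A subtle point is consistency between the truncated problem and the original one: one must verify that the local solution $(u^{\varepsilon,\delta},\tau^{\varepsilon,\delta}_\gamma)$ of \eqref{SHE01} (resp.~\eqref{SHE02}) coincides, up to $\tau^{\varepsilon,\delta}_\gamma$, with the unique global solution $\tilde u^{\varepsilon,\delta}$ of the $\tilde G$-equation, and that $\tilde\tau^{\varepsilon,\delta}_\gamma = \tau^{\varepsilon,\delta}_\gamma$ on the relevant events — this is a uniqueness statement that follows from the local well-posedness theory used to construct $(u^{\varepsilon,\delta},\tau^{\varepsilon,\delta}_\gamma)$, since $G$ and $\tilde G$ agree on the region visited before the stopping time. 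With this identification, on $\{\tilde\tau^{\varepsilon,\delta(\varepsilon)}_\gamma = T\} = \{\tau^{\varepsilon,\delta(\varepsilon)}_\gamma = T\}$ the extension \eqref{extension-main} is irrelevant and one simply reads off the estimate from the regular-coefficient theorem applied to $\tilde u^{\varepsilon,\delta(\varepsilon)}$.

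The main obstacle I expect is not the localization bookkeeping but the \emph{quantitative $L^\infty$-bound on $w_0^{\varepsilon,\delta(\varepsilon)}$} needed to estimate $\Prob(\tau^{\varepsilon,\delta(\varepsilon)}_\gamma<T)$: in the non-conservative case one needs pointwise-in-$(t,x)$ moment control of the stochastic convolution against $\xi^{\delta(\varepsilon)}$ with its $\delta$-dependent blow-up, which forces the manual heat-kernel estimates alluded to in the introduction rather than off-the-shelf maximal regularity; in the conservative case the gradient in front of the noise is absorbed by the semigroup, so stochastic maximal $L^p$-regularity suffices, but the resulting bound is only in $L^p([0,T]\times\mathbb T^d)$, so the stopping time must be formulated (and the Moser iteration invoked) carefully to pass from an $L^p$-smallness of $w_0$ to genuine pointwise control near the threshold $\gamma$. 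Handling the interplay of the Moser-iteration $L^\infty$-estimate with the $\delta$-singular noise bound is the technical heart of the argument.
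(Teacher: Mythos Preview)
Your proposal is correct and follows essentially the same route as the paper's proof in Section~\ref{sec-7}: truncate $G$ to a globally smooth $G_0$ (your $\tilde G$), observe that the expansion coefficients $\bar u^{i,\delta}$ are unchanged since $\bar u^{0,\delta}$ stays in $[\operatorname{ess\,inf}u_0,\operatorname{ess\,sup}u_0]$, identify the local solution with the global $G_0$-solution $\rho^{\varepsilon,\delta}$ up to $\tau^{\varepsilon,\delta}_\gamma$ via local uniqueness, apply the regular-coefficient expansion (Theorem~\ref{n-CLT}) to $\rho^{\varepsilon,\delta}$, and control $\mathbb P(\tau^{\varepsilon,\delta(\varepsilon)}_\gamma\le t)$ via the Moser-iteration $L^\infty$-bound (Lemma~\ref{lem-moser-1}) combined with Chebyshev. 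One small discrepancy: the paper does not bound $\|w_0\|_{L^\infty}$ but rather applies Moser iteration directly to $(\rho^{\varepsilon,\delta}-K)^+$ and $(\rho^{\varepsilon,\delta}-K')^-$, obtaining $\mathbb E\|(\rho^{\varepsilon,\delta}-K)^+\|_{L^\infty}\lesssim (\varepsilon\delta^{-d})^{\gamma'}$ (resp.\ $(\varepsilon\delta^{-d-2})^{\gamma'}$) for some $\gamma'>0$ that is \emph{not} $p/2$ but an iteration-dependent exponent; your stated bound $\mathbb E\|w_0\|^p\lesssim(\varepsilon\delta^{-d})^{p/2}$ is not what the Moser argument delivers, though the conclusion (vanishing under the scaling \eqref{localscale-3}/\eqref{localscale-4}) is the same.
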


\subsection{Applications}\label{ssec:applications}

In this section, we further elaborate on the relation of the results of this work to several conservative and non-conservative interacting particle systems.

\subsubsection{Non-conservative models}

The following stepping stone model from population genetics is introduced in \cite{BD}. We track two alleles, $a$ and $A$, the population subject to these two alleles is divided into discrete demes, indexed by $i\in\mathbb{Z}^d$. Let $u_i(t)$ be the proportion of allele $a$ in the $i$th deme, which is governed by the following system of SDEs
\begin{align}\label{ctssm0}
du_i(t)=&D\sum_{j\in\mathbb{Z}^d}m_{ij}(u_j(t)-u_i(t))dt-\mu_1 u_i(t)dt\notag\\
+&\mu_2(1-u_i(t))dt+\sqrt{\gamma u_i(t)(1-u_i(t))}dB_i(t).
\end{align}
Here, $\mu_1$ is the mutation rate from type $a$ to $A$ and $\mu_2$ is the mutation rate from type $A$ to $a$, $\{B_i\}_{i\in\mathbb{Z}^d}$ are independent Brownian motions and $\{m_{ij}\}_{i,j\in\mathbb{Z}^d}$ are migration rates from deme $j$ to deme $i$. The constants $D$ and $\gamma$ will be addressed below. See \cite{BD} for more details on this model.

In the following, we introduce the structured coalescent process, which will play a role in making some calculations for the stepping stone model. The structured coalescent takes values in ${(n_i)_{i\in\mathbb{Z}^d}: n_i\in\mathbb{N}}$, and its dynamics are stated as follows. Migration from site $i$ to $j$ decreases the number of individuals at site $i$ by one, $n_i\rightarrow n_i-1$, and increases the number of individuals at site $j$ by one, $n_j\rightarrow n_j+1$, at rate $Dn_im_{ij}$. Death at site $i$ decreases the number of individuals at site $i$ by one, $n_i\rightarrow n_i-1$, at rate $\mu_2n_i$. Coalescence at site $i$ decreases the number of individuals at site $i$ by one, $n_i\rightarrow n_i-1$, at rate $\frac{1}{2}\gamma n_i(n_i-1)$. The duality between the stepping stone model and the structured coalescent process is stated as below, for any $t>0$, 
\begin{align*}
\mathbb{E}(u_i(t)^{n_i(0)})=\mathbb{E}\Big[u_i(0)^{n_i(t)}\exp\Big\{-\int_0^t\mu_1\Big(\sum_{i\in\mathbb{Z}^d}n_i(s)\Big)ds\Big\}\Big],	\ \ i\in\mathbb{Z}^d, 
\end{align*}
where $(u_i)_{i\in\mathbb{Z}^d}$ is the solution of (\ref{ctssm0}), $n$ is the structured coalescent process. See \cite[Lemma 2.3]{BD} for more details and see \cite{Shiga} for the proof.

We track two individual samples started from deme $i$ and deme $j$, which are two random walks on $\mathbb{Z}^d$ denoted by $X(t)$ and $Y(t)$, respectively, with initial positions $X(0)=i$ and $Y(0)=j$, and with transition rates $\{m_{ij}\}_{i,j\in\mathbb{Z}^d}$ and coalescence rate $\gamma$ when they occupy the same position. According to the dynamics of the structured coalescent process above, the coalescence rate of $X$ and $Y$ depends on where they meet as well, denoted by $\phi(i,j)$, and the values of $n_{\phi(i,j)}$. 

Let $\tau_{\gamma}(i,j)$ be the stopping time when $X$ and $Y$ coalesce, more precisely,
\begin{equation*}
\tau_{\gamma}(i,j)=\inf\{t>0: X(t)=Y(t) \text{ and } X(t), Y(t) \text{ coalesce}\}.
\end{equation*}
One is then interested in approximating the law of $\tau_{\gamma}$ by approximating its Laplace transform 
\begin{equation}\label{laplace-time}
\mathbb{E}[e^{-2(\mu_1+\mu_2)\tau_{\gamma}(i,j)}]. 	
\end{equation}

In order to estimate (\ref{laplace-time}), stochastic PDE models are employed: The Fleming-Viot process
\begin{equation}\label{e-6}
\partial_t u=\frac{\sigma^2}{2}\Delta u-\mu_1u+\mu_2(1-u)+\sqrt{\gamma u(1-u)}\xi,
\end{equation}
is introduced as a fluctuating continuum model for the stepping stone model (\ref{ctssm0}), where $\xi$ is the space time white noise, see \cite[Lemma 2.7]{BD}. As indicated above, giving rigorous meaning to \eqref{e-6} is an open problem in $(d\geq2)$-spatial dimension, due to the irregularity of space time white noise. Therefore, in \cite{BD} a linear expansion of (\ref{e-6}) is derived, which leads to the (affine-)linear SPDE
\begin{equation*}
\partial_t\bar{u}^1=\Delta\bar{u}^1-\mu_1\bar{u}^1+\mu_2(1-\bar{u}^1)+\sqrt{\gamma\bar{u}(1-\bar{u})}\xi.
\end{equation*}
Here $\bar{u}$ is the constant solving
\begin{equation*}
\partial_t\bar{u}=\frac{\sigma^2}{2}\Delta^{(1)} \bar{u}-\mu_1\bar{u}+\mu_2(1-\bar{u}),
\end{equation*}
with constant initial data $\bar{u}(0)=\frac{\mu_2}{\mu_1+\mu_2}$, and $\Delta^{(1)}$ is the infinitesimal generator of the random walk with transition rates $\{m_{ij}\}$.

As pointed out in \cite{BD}, the covariance $\mathbb{E}(\bar{u}^1(t,x)\bar{u}^1(t,y))$ can be used to approximate (\ref{laplace-time}) when the population is in equilibrium. More precisely, this covariance is identified as the first term in the series expansion with respect to $\gamma$ of (\ref{laplace-time}). 

Motivated by the aim to derive a higher order approximation in $\gamma$ of (\ref{laplace-time}), we devote to deriving a higher order approximation of  (\ref{e-6}) when $\gamma\rightarrow0$. The latter is given by the expansion formula for non-conservative SHEs with irregular diffusion coefficients, which is derived in the present work.

\subsubsection{Conservative models}

Let $\eta^N$ be the SSEP with initial measure $\mu^N$ given by a smooth local equilibrium profile $u_0$, which means that $\mathbb{E}_{\mu^N}(\eta^N_0(x))=\mu^N(\eta^N_0(x)=1)=u_0(x/N)$, where $\mathbb{E}_{\mu^N}$ denotes the expectation with respect to $\mu^N$. For details see Ravishankar \cite{RK} and Kipnis and Landim \cite[Chapter 2]{KL99}. The corresponding empirical density $\pi^N$ is defined by
\begin{align*}
\pi^N(t,x)=\frac{1}{N^d}\sum_{x\in(\mathbb{Z}^d/N\mathbb{Z}^d)}\delta_{x/N}\eta_{N^2t}(x).
\end{align*}
Let $\bar{u}$ be the hydrodynamic limits of SSEP, that is, the solution to \eqref{HE}. In Ravishankar \cite{RK}, Kipnis, Varadhan \cite{KV2}, Rezakhanlou \cite{RF} the central limit fluctuations for non-equilibrium of SSEP has been proven. We refer to \cite{RK}, the result of the fluctuations therein shows that the law of the fluctuation density fields converges to a Gaussian
\begin{equation*}
\mathcal{L}\Big(N^{\frac{1}{2}}(\pi^N-\bar{u})\Big)\rightharpoonup\mathcal{L}(\bar{u}^1), \ \ N\rightarrow\infty,
\end{equation*}
where $\bar{u}^1$ solves
\begin{equation}\label{CLTssep}
\partial_t\bar{u}^1=\Delta\bar{u}^1+\nabla\cdot(\sqrt{\bar{u}(1-\bar{u})}\xi),\quad \bar{u}^1(0)=0,
\end{equation}
with $\xi$ being space time white noise. In other words, it is shown that
\begin{align*}
\pi^N=\bar{u}+N^{-\frac{1}{2}}\bar{u}^1+o(N^{-\frac{1}{2}}).
\end{align*}

The dynamical large deviation principle for the empirical density $\pi^N$ has been shown by Quastel, Rezakhanlou and Varadhan \cite{QRV} and Kipnis and Landim \cite{KL99}, with rate function given by 
\begin{equation}\label{ratefunc}
I_{u_0}(\pi):=\frac{1}{2}\inf\Big\{\int_0^T\int_{\mathbb{T}^d}|g|^2dxdt:\partial_tu=\Delta u+\nabla\cdot(\sqrt{u(1-u)}g), u(0)=u_0\Big\},
\end{equation}
for $\pi(dx)=udx$ and $I_{u_0}(\pi)<\infty$, where $u_0$ is the initial density profile of SSEP. 

In \cite{DFG}, the fluctuating continuum model 
\begin{equation}\label{GessSSEP}
\partial_t u^{\varepsilon}=\Delta u^{\varepsilon}+\varepsilon^{\frac{1}{2}}\nabla\cdot(\sqrt{u^{\varepsilon}(1-u^{\varepsilon})}\circ\xi^{\delta(\varepsilon)}) 
\end{equation} 
for SSEP has been proposed, which is a modification of  (\ref{SHE0.0}). Here, $\circ$ denotes the Stratonovich integral, and $\xi^{\delta}$ is a regularized space time white noise. In \cite{DFG} the large deviations for $u^{\varepsilon}$ are proved with the same rate function (\ref{ratefunc}) as for SSEP. In addition, \cite{DFG} proves the central limit theorem of $u^{\varepsilon}$, that is, $\varepsilon^{-\frac{1}{2}}(u^{\varepsilon}-\bar{u})\rightarrow\bar{u}^1$, where $\bar{u}^1$ is the same Gaussian as in \eqref{CLTssep}. This shows that fluctuations and rare events of SSEP can be predicted by the SPDE (\ref{GessSSEP}). The main results of the present paper go beyond this by establishing higher order fluctuation expansions.

\subsection{Comments on the literature}
Early works on small noise asymptotic expansions for SPDEs comprise  Albeverio, Di Persio, and Mastrogiacomo \cite{ADM}, and Albeverio, Mastrogiacomo, and Smii \cite{AMS} for stochastic reaction-diffusion equations. For related results, see also Albeverio and Smii \cite{AS} and Klosek, Matkowsky, and Schuss \cite{KMS}, where the authors consider small noise expansions for SDEs. Furthermore, Friz and Klose proved small noise expansions for the subcritical parabolic Anderson model in \cite{FK3}.

Non-conservative SHEs with irregular diffusion coefficients contain the correlated Fleming-Viot process and the Dawson-Watanabe equation. The Fleming-Viot process was introduced by Fleming and Viot in \cite{FV} in population genetics. See also Biswas, Etheridge, and Klimek \cite{BEK}, Champagnat and Villemonais \cite{CV}, Kouritzin and L\^{e} \cite{KL}, and Altomare, Cappelletti, and Leonessa \cite{ACL}, among others. For various studies of the Dawson-Watanabe equation, we refer to the works of Mandler and Overbeck \cite{MO}, L\^{e} \cite{LK}, Kallenberg \cite{KO}, Chen, Ren, and Wang \cite{CRW}, and the additional references provided in these sources. For the central limit fluctuations of the Fleming-Viot process and the Dawson-Watanabe equation, see, for example, C\'{e}rou, Delyon, Guyader, and Rousset \cite{CDGR}.

Conservative-type SHEs with irregular coefficients appear as fluctuating continuum models in a variety of settings, including the SSEP and the Dean-Kawasaki equation. The Dean-Kawasaki equation was proposed by Dean \cite{DD} and Kawasaki \cite{KK}. The existence of martingale solutions of the Dean-Kawasaki equation has been obtained by Konarovskyi, Lehmann and von Renesse \cite{KLV}. Regarding the well-posedness of the correlated Dean-Kawasaki equation, see Fehrman and Gess \cite{FG21}. Later on, a large deviation principle is obtained by \cite{FG22} and a central limits theorem is obtained by \cite{CF23}.  For the weak error estimates between the Dean-Kawasaki equation and interacting mean-field systems, see Cornalba and Fischer \cite{CF}, Cornalba, Fischer, Ingmanns, and Raithel \cite{CFIR}, and Djurdjevac, Kremp, and Perkowski \cite{DKP}. Further literature on the Dean-Kawasaki equation, we refer readers to Andres and von Renesse \cite{AV}, Konarovskyi and von Renesse \cite{KV}, and Konarovskyi, Lehmann, and von Renesse \cite{KLV2}. Regarding to the fluctuations of the symmetric simple exclusion process, we refer to Ravishankar \cite{RK}, Kipnis and Varadhan \cite{KV2}, Rezakhanlou \cite{RF}, and the references therein.

\subsection{Structure of the paper}
The structure of this paper is organized as follows. In Section \ref{sec-2}, we introduce notations and the precise framework of this work. In Section \ref{sec-3}, Two technical lemmas on estimates of the heat semigroup are presented. In Section \ref{sec-4}, the global in time well-posedness of SHE with smooth diffusion coefficients and the local in time well-posedness for the case of non-smooth diffusion coefficients  will be provided.  In Section \ref{sec-5}, we analyze the speed of divergence of the coefficients $\bar{u}^{k,\delta}$. In Section \ref{sec-6}, we prove the main results for the case of smooth diffusion coefficients. The small noise expansion for irregular coefficients is proven in Section \ref{sec-7}. In Section \ref{sec-8}, we apply the main results to the Fleming-Viot process, to the SSEP, the Dawson-Watanabe equation, and the Dean-Kawasaki equation.

\section{Preliminaries}\label{sec-2}

\subsection{Notations}

We fix $T>0$ in the whole paper, and let $(\Omega,\mathcal{F},\mathbb{P},\{\mathcal{F}_t\}_{t\in
[0,T]})$ be a standard filtered probability space, with  $\mathbb{E}$ being the corresponding expectation. Let $\mathbb{N}_+$ be the set of all positive natural numbers. For any $d\in\mathbb{N}_+$, let $\mathbb{T}^d$ denote the $d-$dimensional torus, with convention  $\mathbb{T}^d=[-1/2,1/2]^d$. The Laplace operator has the following eigenvalue decomposition 
\begin{equation*}
	\Delta e_{k,\theta}=-\alpha_ke_{k,\theta},\ k\geq0,\ \theta=1,2,
\end{equation*}
for some $0=\alpha_0<...<\alpha_k<...\rightarrow+\infty$ and  $\{e_{k,\theta}\}_{k\geq1,\theta=1,2}$ take the form $e_{k,1}(x)=\sqrt{2}\sin(2\pi\sqrt{\alpha_k}x)$, $e_{k,2}(x)=\sqrt{2}\cos(2\pi\sqrt{\alpha_k}x)$, $k\geq0$. By \cite[(2.2)]{CD17}, there exists $c>0$ such that  
\begin{equation}\label{eq:alpha_scaling}
c^{-1}k^{\frac{2}{d}}\leq\alpha_k\leq ck^{\frac{2}{d}},\ \ k\geq1. 	
\end{equation}
Let $W$ be a cylindrical Wiener process defined on the real-valued Lebesgue space $L^2(\mathbb{T}^d)$, given by $W(t)=\sum_{k\geq0,\theta=1,2}\beta_{k,\theta}(t) e_{k,\theta}$, $t\in[0,T]$, where $\{\beta_{k,\theta}(t)\}_{t\in[0,T]},k\geq0,\theta=1,2$ are independent $\{\mathcal{F}_t\}_{t\in [0,T]}-$Brownian motions. As mentioned in the introduction, in the case of non-conservative SHEs $\{\beta_{k,\theta}(t)\}_{t\in[0,T]},k\geq0,\theta=1,2$ are one-dimensional Brownian motions, while in the case of conservative SHEs $\{\beta_{k,\theta}(t)\}_{t\in[0,T]},k\geq0,\theta=1,2$ are $d$-dimensional Brownian motions. For $a,b\in\mathbb{R}$, the notation $a\lesssim b$ means that there exists a constant $C>0$, such that $a\leq Cb$.

\subsection{Assumptions}
For the initial data of the stochastic heat equation (\ref{SHE02}) and (\ref{SHE01}), we assume the following conditions. 
\begin{description}
  \item[Hypothesis H1] The initial data $u_0\in L^{\infty}(\mathbb{T}^d)$.
\end{description}
\begin{remark}
  The requirement of Hypothesis H1 is motivated by the regularity of the diffusion coefficient. For instance, the diffusion term in Fleming-Viot process is given by ``$\sqrt{u(1-u)}dW(t)$", which requires that the solution $u(t,x)\in[0,1]$, for every $(t,x)\in[0,T]\times\mathbb{T}^d$. Thus, we need to impose the condition $u_0(x)\in[0,1]$ for every $x\in\mathbb{T}^d$.
\end{remark}

In addition, we also need conditions on the derivatives of $G$ when deriving higher order small noise expansions for nonlinear stochastic heat equations.
\begin{description}
  \item[Hypothesis H2] The derivatives of $G$ satisfy
  \begin{eqnarray}\label{e-2}
    G^{(l)}(\cdot)\in C_b(\mathbb{R}),\ \forall l\geq 0.
  \end{eqnarray}
\end{description}
From Hypothesis H2, we deduce that for any $n\geq 1$, there exists a constant $K=K(n,G)$ such that
\begin{equation*}
\Big|G(a)-G(b)-\sum_{l=1}^{n-1}\frac{1}{l!}G^{(l)}(b)(a-b)^{l}\Big|\leq K|a-b|^n.
\end{equation*}

We note that neither the coefficient $G(u)=\sqrt{u}$ in the Dawson-Watanabe equation nor the coefficient $G(u)=\sqrt{u(1-u)}$ in the  Fleming-Viot process satisfies Hypothesis H2. In order to apply the results on higher order small noise expansions in these cases, we relax Hypothesis H2 to the following Hypothesis H3.
\begin{description}
  \item[Hypothesis H3]   The initial data $u_0\in L^{\infty}(\mathbb{T}^d)$, and there exists a constant $\gamma>0$, such that for every $l\geq 0$,
  \begin{eqnarray*}
    G^{(l)}\in C_b([{\rm{ess\inf}} u_0-\gamma,{\rm{ess\sup}} u_0+\gamma]).
  \end{eqnarray*}
\end{description}
In Section \ref{sec-7}, we apply the main results to several particle models with diffusion coefficient $G$ and initial data $u_0$ satisfying Hypothesis H3.

\subsection{The approximation of space time white noise}

In the following we fix once and for all an integer $n>(\frac{d-2}{4})\vee0$. For every $\delta>0$, let $\eta_{\delta}$ be the fundamental solution of 
\begin{align}\label{etadelta-1}
(I-\delta^2\Delta)^n\eta_{\delta}=\delta_0. 	
\end{align}
We denote the heat semigroup by $\{S(t)\}_{t\geq0}$. Thanks to the fact that the resolvent of the Laplace operator can be represented as the Laplace transform of the heat semi-group \cite[Chapter 1, Remark 5.4]{Pazy83}, it follows that 
\begin{align*}
\eta_{\delta}=(I-\delta^2\Delta)^{-n}\delta_0=\int_{\mathbb{R}_+^n}\frac{1}{\delta^{2n}}e^{-\frac{t_1+...+t_n}{\delta^2}}S(t_1)...S(t_n)\delta_0dt_1...dt_n\geq0,	
\end{align*}
This shows that the convolution kernel $\eta_{\delta}$ is nonnegative. Furthermore, we are able to see that 
\begin{align}\label{propertyofeta}
\langle\eta_{\delta},e_{j,\theta}\rangle\lesssim \frac{1}{1+(\delta^2\alpha_j)^n}, 	\ \ j\geq0,\ \theta=1,2. 
\end{align}
Indeed, by using the representation of the resolvent again, we find that 
\begin{align*}
\langle\eta_{\delta},e_{j,\theta}\rangle=&((I-\delta^2\Delta)^{-n}e_{j,\theta})(0)\\
=&\Big(\int_{\mathbb{R}_+^n}\frac{1}{\delta^{2n}}e^{-\frac{t_1+...+t_n}{\delta^2}}S(t_1+...+t_n)e_{j,\theta}dt_1...dt_n\Big)(0)\\
\leq&\Big(\int_{\mathbb{R}_+^n}\frac{1}{\delta^{2n}}e^{-\frac{t_1+...+t_n}{\delta^2}}e^{-(t_1+...+t_n)\alpha_j}dt_1...dt_n\Big)\\
=&(1-\delta^2\alpha_j)^n\lesssim\frac{1}{1+(\delta^2\alpha_j)^n}. 
\end{align*}
By rescaling we have $\eta_{\delta}(\cdot)=\frac{1}{\delta^d}\eta_1(\frac{\cdot}{\delta})$.

We next introduce a smooth spatial approximation of the infinite dimensional Brownian motion. We say $W$ is an $L^2(\mathbb{T}^d)$-cylindrical Wiener process, if it is a sequence of  bounded linear operator $W(t): L^2(\mathbb{T}^d)\rightarrow L^2(\Omega)$, $t\geq0$ such that 

\begin{description}
	\item {(i)} for all $t\geq0$, $h\in L^2(\mathbb{T}^d)$, $W(t)h$ is centred Gaussian and $\mathcal{F}(t)$-measurable;  
	\item {(ii)} we have that $\mathbb{E}(W(t_1)h_1\cdot W(t_2)h_2)=t_1\wedge t_2\langle h_1,h_2\rangle$, for all $t_1,t_2\geq0$, $h_1,h_2\in L^2(\mathbb{T}^d)$;
	\item {(iii)} for every $0\leq t_1\leq t_2$, $h\in L^2(\mathbb{T}^d)$, the random variable $W(t_2)h-W(t_1)h$ is independent of $\mathcal{F}(t_1)$.  
\end{description}
Furthermore, an $L^2(\mathbb{T}^d)$-cylindrical Winer process can be represented by 
\begin{align*}
	W(t)=\sum_{k\geq0,\theta=1,2}\beta_{k,\theta}(t)\langle e_{k,\theta},\cdot\rangle, 
\end{align*}
where $\beta_{k,\theta}(t)=W(t)e_{k,\theta}$. A spatially smooth Brownian motion $W^{\delta}$ is defined as follows. For all $x\in\mathbb{T}^d$, 
\begin{equation}\label{smooth-1}
W^{\delta}(t,x):=W(t)\eta_{\delta}(x-\cdot)=\sum_{k\geq0,\theta=1,2}\beta_{k,\theta}(t)(\eta_{\delta}\ast e_{k,\theta})(x),
\end{equation}
where $\langle\cdot,\cdot\rangle$ denotes the $L^2(\mathbb{T}^d)$-inner product. $W^{\delta}(t)$ is correlated in space with correlation length $\delta$, in the sense that for $t,s>0$, $x,y\in\mathbb{T}^d$, we have
\begin{align*}
\mathbb{E}(W^{\delta}(t,x)W^{\delta}(s,y))=(t\wedge s)R_{\delta}(x-y),	
\end{align*}
where $R_{\delta}(x-y)=0$, if $|x-y|>\delta$. Furthermore, we introduce a bilinear form induced by the covariance structure: For $f_1,f_2\in L^2(\mathbb{T}^d)$, set
\begin{equation}\label{innerproduct}
\langle f_1,f_2\rangle_{\delta}=\int_{\mathbb{T}^{2d}}f_1(x)f_2(y)R_{\delta}(x-y)dxdy,
\end{equation}
where $R_{\delta}$ can be calculated directly: 
\begin{eqnarray}\label{e-7}
R_{\delta}(x-y)=\int_{\mathbb{T}^d}\eta_{\delta}(x-y+z)\eta_{\delta}(z)dz.
\end{eqnarray}
By the definition of $R_{\delta}$, for any $f\in L^2(\mathbb{T}^d)$, we have
\begin{align*}
\langle f,f\rangle_{\delta}=\int_{\mathbb{T}^d}|(f\ast\eta_{\delta})(x)|^2dx\geq 0.
\end{align*}
As a consequence, 
\begin{equation}\label{norm}
\|f\|_{\delta}:=\langle f,f\rangle_{\delta}^{\frac{1}{2}}
\end{equation}
is a semi-norm. For every $h,g \in L^2(\mathbb{T}^d)$, $t,x\geq0$, we have the covariance structure  
\begin{align}\label{cov}
\mathbb{E}[\langle W^{\delta}(t),h\rangle\langle W^{\delta}(s),g\rangle]=&(t\wedge s)\langle h,g\rangle_{\delta}\notag\\
=&(t\wedge s)\int_{\mathbb{T}^{2d}}h(x)g(y)R_{\delta}(x-y)dxdy.
\end{align}
When $\delta=0$, $W^{\delta}(t)$ turns out to be the cylindrical Brownian motion.

Since $R_{\delta}$ is nonnegative,  we are able to  define the It\^o stochastic integration against $W^{\delta}(t)$ as in \cite{LR}.

Let $p$ represent the heat kernel on $\mathbb{T}^d$. Referring to \cite{Daprato}, mild solutions of (\ref{SHE02}) and (\ref{SHE01}) can be written as  
\begin{equation}\label{mild-0}
u^{\varepsilon,\delta}(t,x)=(p(t)\ast u_0)(x)+\varepsilon^{\frac{1}{2}}\int_0^t\langle p(t-s,x-\cdot),G(u^{\varepsilon,\delta}(s,\cdot))dW^{\delta}(s)\rangle,
\end{equation}
and
\begin{equation}\label{mild-0-cons}
u^{\varepsilon,\delta}(t,x)=(p(t)\ast u_0)(x)+\varepsilon^{\frac{1}{2}}\int_0^t\langle \nabla_x p(t-s,x-\cdot),G(u^{\varepsilon,\delta}(s,\cdot))dW^{\delta}(s)\rangle,
\end{equation}
respectively.  

For $d=1$, (\ref{SHE02}) is also well-posed when driving by space time white noise.  To ease notation, when considering the nonconservative case, $d=1$, we implicity understand $\delta=0$. 

\section{Maximal $L^p$-regularity and scaling regimes}\label{sec-3}

In this section, we provide two forms of $L^p$-estimates for stochastic convolutions: Let $g(t)$ be an $L^2(\mathbb{T}^d)$-progressively measurable process with 
\begin{equation}\label{Lpforg}
\mathbb{E}\|g\|_{L^p([0,T]\times\mathbb{T}^d)}^p<\infty,\ \ \text{for any }p\in[1,+\infty). 	
\end{equation}
For any $t\in[0,T]$, let $g_{\delta}(t):L^2(\mathbb{T}^d)\rightarrow L^2(\mathbb{T}^d)$  be the operator defined by $f\rightarrow g(t)(\eta_{\delta}\ast f)$. We consider the following two types of stochastic heat equations, 
\begin{equation}\label{sto-convolution-1}
du=\Delta udt+\varepsilon^{\frac{1}{2}}g_{\delta}(t)dW(t),\ \ u(0)=0,	
\end{equation}
and 
\begin{equation}\label{sto-convolution-2}
	du=\Delta udt+\varepsilon^{\frac{1}{2}}\nabla\cdot(g_{\delta}(t)dW(t)),\ \ u(0)=0, 
\end{equation}
corresponding to (\ref{SHE02}) and (\ref{SHE01}), respectively. Here, with  abuse of notation, $W$ in (\ref{sto-convolution-1}) denotes a scalar $L^2(\mathbb{T}^d)$-cylindrical process, while in (\ref{sto-convolution-2}), $W$ denotes a vector-valued $L^2(\mathbb{T}^d)$-cylindrical process. The mild solution to (\ref{sto-convolution-1}) and (\ref{sto-convolution-2}) are given by stochastic convolutions 
\begin{equation}\label{sto-convolution-3}
	u(t)=\varepsilon^{\frac{1}{2}}\int^t_0S(t-s)g_{\delta}(s)dW(s), 
\end{equation}
and 
\begin{equation}\label{sto-convolution-4}
	u(t)=\varepsilon^{\frac{1}{2}}\int^t_0\nabla S(t-s)g_{\delta}(s)dW(s),  
\end{equation}
respectively. The main aim of this section is the derivation of regularity estimates for these stochastic convolutions. As indicated in the introduction to this work, the cases of conservative and nonconservative noise are of distinct difficulty, and, thus, are treated separately.

\subsection{The case of nonconservative noise}\label{subsec-4-1}

The derivation of the expansion formula \eqref{expansion-2} requires estimates on the speed of divergence of the expansion coefficients $\bar{u}^{k,\delta}$ in \eqref{coe} and \eqref{ccoe} as $\delta\rightarrow0$. These will be derived in the present section.

For $x\in\mathbb{T}^d$, $t\in  [0,T]$, set
\begin{equation}\label{K2i}
K_{\delta}(t):=\int_0^t\int_{\mathbb{T}^{2d}}p(t-s,x-y_1)p(t-s,x-y_2)R_{\delta}(y_1-y_2)dy_1dy_2ds,
\end{equation}
where $p$ is the heat kernel. We note that the righthand side of (\ref{K2i}) does not depend on $x$. For $d=1$, it turns to the case of $\delta=0$ automatically, then $K_{\delta}(t)=\|p(t-\cdot,\cdot)\|_{L^2([0,t]\times\mathbb{T}^d)}^2<\infty$, which means the integration of (\ref{K2i}) does not blow up. Thus we only estimate the divergence speed of (\ref{K2i}) as the parameter $\delta\rightarrow0$ when $d\geq2$. 

\begin{lemma}\label{K2}
Let $d\geq2$. For every $\delta\in(0,1/2)$, let $K_{\delta}$, $K_1(\delta,d)$ be defined by (\ref{K2i}), (\ref{Kdelta}), respectively. Then 
  \begin{equation}\label{divkernel}
\sup_{t\in[0,T]}K_{\delta}(t)\lesssim K_1(\delta,d).
\end{equation}

\end{lemma}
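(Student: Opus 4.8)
The plan is to estimate $K_\delta(t)$ by passing to Fourier space on $\mathbb{T}^d$, where both the heat kernel and the mollified covariance $R_\delta$ diagonalize. Writing $p(t-s,\cdot)$ in terms of the eigenfunctions $e_{k,\theta}$ with eigenvalues $-\alpha_k$, and using that the bilinear form $\langle\cdot,\cdot\rangle_\delta$ acts on the $k$-th mode by multiplication by $\langle\eta_\delta,e_{k,\theta}\rangle^2$ (equivalently by $|\widehat{\eta_\delta}(k)|^2$), the double spatial integral in \eqref{K2i} collapses to a single sum. Concretely, I expect
\begin{align*}
K_\delta(t) = \sum_{k\geq 0,\ \theta=1,2} \langle\eta_\delta,e_{k,\theta}\rangle^2 \int_0^t e^{-2\alpha_k(t-s)}\,ds
= \sum_{k\geq 0,\ \theta=1,2} \langle\eta_\delta,e_{k,\theta}\rangle^2 \cdot \frac{1-e^{-2\alpha_k t}}{2\alpha_k},
\end{align*}
so that $\sup_{t\in[0,T]} K_\delta(t) \lesssim \sum_{k\geq 1} \frac{1}{\alpha_k}\,\frac{1}{(1+(\delta^2\alpha_k)^n)^2}$, where I used the bound \eqref{propertyofeta} on $\langle\eta_\delta,e_{k,\theta}\rangle$ and absorbed the $k=0$ term (which is bounded by $t\leq T$, hence harmless, or is killed by $(1-e^{-2\alpha_k t})/(2\alpha_k)\to t$).

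Next I would convert the sum into an integral via the two-sided bound $c^{-1}k^{2/d}\leq\alpha_k\leq ck^{2/d}$ from \eqref{eq:alpha_scaling}: up to constants,
\begin{align*}
\sum_{k\geq 1} \frac{1}{\alpha_k\,(1+(\delta^2\alpha_k)^n)^2} \lesssim \sum_{k\geq 1} \frac{k^{-2/d}}{(1+(\delta^2 k^{2/d})^n)^2} \lesssim \int_1^\infty \frac{r^{-2/d}}{(1+(\delta^2 r^{2/d})^{n})^2}\,dr.
\end{align*}
Substituting $\rho = \delta^2 r^{2/d}$ (so $r \sim (\rho/\delta^2)^{d/2}$, $dr \sim \delta^{-d}\rho^{d/2-1}\,d\rho$) turns this into $\delta^{-d+2}\int_{\delta^2}^\infty \frac{\rho^{-d/2}\,\rho^{d/2-1}}{(1+\rho^n)^2}\,d\rho = \delta^{-d+2}\int_{\delta^2}^\infty \frac{d\rho}{\rho(1+\rho^n)^2}$ — modulo checking the exponent bookkeeping carefully. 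The integral $\int_{\delta^2}^\infty \rho^{-1}(1+\rho^n)^{-2}\,d\rho$ has a logarithmic divergence at the lower endpoint: it behaves like $\log(1/\delta^2) \sim \log(1/\delta)$ as $\delta\to 0$ (the upper tail converges since $n\geq 1$). Therefore the whole expression is $\lesssim \delta^{-d+2}\log(1/\delta)$ — but this needs to be reconciled with the claimed bound $K_1(\delta,d)$, which is $\log(1/\delta)$ for $d=2$ and $\delta^{-d+2}$ for $d\geq 3$. The resolution is that for $d=2$ the prefactor $\delta^{-d+2}=1$ and one genuinely gets $\log(1/\delta)$; for $d\geq 3$ one should \emph{not} pull the $\log$ out — instead, redo the substitution keeping track of the fact that for $d\geq3$ the integrand's behavior near $r\sim\delta^{-d}$ (i.e. $\rho\sim 1$) already gives convergence of $\int^\infty$, and the dominant contribution comes from $r\lesssim\delta^{-d}$ where $(1+(\delta^2 r^{2/d})^n)^2\approx 1$, giving $\int_1^{\delta^{-d}} r^{-2/d}\,dr \sim (\delta^{-d})^{1-2/d} = \delta^{-d+2}$ for $d\geq 3$ (since $1-2/d>0$), with the tail $r\gtrsim\delta^{-d}$ contributing a convergent, hence smaller, amount.

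The main obstacle I anticipate is precisely this endpoint analysis: getting the exact power of $\delta$ and the logarithmic factor right in each dimensional regime $d=2$ versus $d\geq 3$, and making sure the crossover is handled so that the $\log$ appears \emph{only} at $d=2$. The cleanest way to organize this is to split the sum at $k_* \sim \delta^{-d}$ (so that $\delta^2\alpha_{k_*}\sim 1$): on $k\leq k_*$ the factor $(1+(\delta^2\alpha_k)^n)^{-2}$ is bounded below by a constant, so that piece contributes $\sum_{k\leq k_*}\alpha_k^{-1}\lesssim \sum_{k\leq k_*} k^{-2/d}$, which is $\sim \log k_* \sim \log(1/\delta)$ when $d=2$ and $\sim k_*^{1-2/d}\sim\delta^{-d+2}$ when $d\geq 3$; on $k> k_*$ the factor $(\delta^2\alpha_k)^{-2n}$ provides decay $\sum_{k>k_*} \alpha_k^{-1}(\delta^2\alpha_k)^{-2n} \lesssim \delta^{-4n}\sum_{k>k_*}k^{-(1+4n)/d\cdot ...}$ — which, using $n > (d-2)/4$, i.e. $4n > d-2$, converges and is dominated by the first piece. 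Assembling the two pieces yields exactly $K_1(\delta,d)$, uniformly in $t\in[0,T]$. The constant $C$ in the $d=1$ branch of \eqref{Kdelta} is irrelevant here since the lemma assumes $d\geq 2$.
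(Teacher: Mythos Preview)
Your proposal is correct and follows essentially the same route as the paper: rewrite $K_\delta(t)$ via Parseval as $\sum_{k,\theta}\langle\eta_\delta,e_{k,\theta}\rangle^2\int_0^t e^{-2\alpha_k(t-s)}ds$, bound by $\sum_{k\geq 1}\alpha_k^{-1}(1+(\delta^2\alpha_k)^n)^{-2}$ plus the harmless $k=0$ contribution, and compare with an integral using \eqref{eq:alpha_scaling}. The only difference is in the final step: the paper performs the change of variables $x\mapsto \delta^d x$ in $\int_1^\infty x^{-2/d}(1+(\delta^2 x^{2/d})^n)^{-2}\,dx$ to pull out $\delta^{2-d}$ and then reads off $\log(1/\delta)$ for $d=2$ versus a convergent integral for $d\geq 3$, whereas you split the sum at $k_*\sim\delta^{-d}$ and estimate each piece directly. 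Both methods are standard and yield the same bound; your splitting argument has the minor advantage of making the role of the hypothesis $n>(d-2)/4$ (needed for the tail $k>k_*$ to be summable and of order $\delta^{-d+2}$) completely explicit, while the paper's substitution is slightly more compact.
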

\begin{proof}
Note that $K_{\delta}(t)$ can be written as
\begin{align*}
K_{\delta}(t)&=\int_0^t\int_{(\mathbb{T}^d)^2}p(t-s,x-y_1)\int_{\mathbb{T}^d}\eta_{\delta}(y_1+z)\eta_{\delta}(z+y_2)dz p(t-s,x-y_2)dy_1dy_2ds\\
&=\int_0^t\int_{\mathbb{T}^d}\Big(\int_{\mathbb{T}^d}p(t-s,x+z-y_1)\eta_{\delta}(y_1)dy_1\Big)^2dzds\\
&=\int^t_0\|P_{t-s}\eta_{\delta}\|_{L^2(\mathbb{T}^d)}^2ds.
\end{align*}

By Parseval's identity we have that
\begin{align*}
\int^t_0\|P_{t-s}\eta_{\delta}\|_{L^2(\mathbb{T}^d)}	^2ds=&\int^t_0\sum_{j\geq0,\theta=1,2}\langle P_{t-s}\eta_{\delta},e_{j,\theta}\rangle^2ds=\int^t_0\sum_{j\geq0,\theta=1,2}\langle\eta_{\delta},P_{t-s}e_{j,\theta}\rangle^2ds\\
=&\int^t_0\sum_{j\geq0,\theta=1,2}e^{-2\alpha_j(t-s)}\langle\eta_{\delta},e_{j,\theta}\rangle^2ds\\
=&\sum_{j\geq1,\theta=1,2}\frac{1}{2\alpha_j}(1-e^{-2\alpha_jt})\langle\eta_{\delta},e_{j,\theta}\rangle^2+t\\
\leq&\sum_{j\geq1,\theta=1,2}\frac{1}{2\alpha_j}\langle\eta_{\delta},e_{j,\theta}\rangle^2+t. 
\end{align*}
Due to \eqref{eq:alpha_scaling} and  (\ref{propertyofeta}), by changing variables, we obtain that
\begin{align*}
	\int^t_0\|P_{t-s}\eta_{\delta}\|_{L^2(\mathbb{T}^d)}	^2ds=&\sum_{j\geq1,\theta=1,2}\frac{1}{2\alpha_j}\langle\eta_{\delta},e_{j,\theta}\rangle^2+t\lesssim\sum_{j\geq1,\theta=1,2}\frac{1}{\alpha_j(1+(\delta^2\alpha_j)^n)^2}+t\\
	\lesssim&\sum_{j\geq1}\frac{1}{j^{\frac{2}{d}}(1+(\delta^2 j^{\frac{2}{d}})^n)^2}+t\lesssim\int_1^{\infty}\frac{1}{x^{\frac{2}{d}}(1+(\delta^2 x^{\frac{2}{d}})^n)^2}dx+t\\
	\lesssim&\delta^{2-d}\int_{\delta^d}^{\infty}\frac{1}{x^{\frac{2}{d}}(1+x^{\frac{2n}{d}})^2}dx+t. 
\end{align*}
Thanks to the choice of $n>(\frac{d-2}{4})\vee0$, the above integrals are finite. When $d=2$, this implies that 
\begin{align*}
	\delta^{-d+2}\int_{\delta^d}^{\infty}\frac{1}{x^{\frac{2}{d}}(1+x^{\frac{2n}{d}})^2}dx=\int_{\delta^2}^{\infty}\frac{1}{x(1+x^{\frac{2n}{2}})^2}dx\lesssim\log(1/\delta).
\end{align*}
When $d\geq3$, we arrive at 
\begin{align*}
	\delta^{-d+2}\int_{\delta^d}^{\infty}\frac{1}{x^{\frac{2}{d}}(1+x^{\frac{2n}{d}})^2}dx\lesssim\delta^{-d+2}.
\end{align*}
In conclusion,   
\begin{align*}
	\sup_{t\in[0,T]}\int^t_0\|P_{t-s}\eta_{\delta}\|_{L^2(\mathbb{T}^d)}	^2ds\lesssim \log(1/\delta)I_{\{d=2\}}+\delta^{-d+2}I_{\{d\geq3\}}. 
\end{align*}
\end{proof}

\begin{lemma}\label{maximalLp-noncons}
 Assume that $g$ satisfies (\ref{Lpforg}). Then there is a constant $C=C(p)$, such that, 
	\begin{equation*}
		\sup_{t\in[0,T],x\in\mathbb{T}^d}\mathbb{E}|u(t,x)|^p\leq C\cdot(\varepsilon K_1(\delta,d))^{\frac{p}{2}}\sup_{t\in[0,T],x\in\mathbb{T}^d}\mathbb{E}|g(t,x)|^p.
	\end{equation*}

\end{lemma}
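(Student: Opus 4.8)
The plan is to prove the pointwise bound via the Burkholder–Davis–Gundy inequality applied to the stochastic convolution \eqref{sto-convolution-3}, reducing everything to the kernel estimate already established in Lemma~\ref{K2}. First I would write, for fixed $(t,x)$,
\begin{equation*}
u(t,x)=\varepsilon^{\frac12}\int_0^t\big\langle p(t-s,x-\cdot),g_\delta(s)\,dW(s)\big\rangle
=\varepsilon^{\frac12}\int_0^t\big\langle (p(t-s,x-\cdot))\ast\eta_\delta,\; g(s)\,dW(s)\big\rangle,
\end{equation*}
where the second identity uses the definition of $g_\delta$ and the self-adjointness of convolution with $\eta_\delta$. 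This is a real-valued It\^o integral against the scalar cylindrical process $W$, so by BDG there is $C=C(p)$ with
\begin{equation*}
\mathbb{E}|u(t,x)|^p\leq C\,\varepsilon^{\frac p2}\,\mathbb{E}\Big(\int_0^t\big\|\, g(s,\cdot)\,\big((p(t-s,x-\cdot))\ast\eta_\delta\big)(\cdot)\,\big\|_{L^2(\mathbb{T}^d)}^2\,ds\Big)^{\frac p2}.
\end{equation*}

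Next I would bound the $L^2(\mathbb{T}^d)$-norm inside by pulling out $g$ in sup-norm: since $\sup_{s,y}|g(s,y)|$ is not available deterministically, I instead keep $g$ under the time integral and use H\"older in $s$ together with the pointwise factorization $|g(s,y)|^2\le \big(\sup_{y}|g(s,y)|^p\big)^{2/p}\cdot(\dots)$ — more cleanly, I would first apply H\"older's inequality in $s$ on $[0,t]$ to the product, writing the integrand as $|g(s,y)|^2\,\omega_\delta(t-s,x-y)$ with $\omega_\delta(r,z):=\big((p(r,\cdot))\ast\eta_\delta\big)(z)^2$, integrate $y$ out, and then dominate $\int_0^t\!\!\int g^2\,\omega_\delta\,\le\, \big(\sup_{s,y}\mathbb{E}|g(s,y)|^p\big)^{2/p}$ after taking expectations and applying Minkowski's integral inequality in $(s,y)$ with the measure $\omega_\delta(t-s,x-y)\,dy\,ds$, whose total mass is exactly $K_\delta(t)$ as computed in the proof of Lemma~\ref{K2}. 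Concretely: by Minkowski's integral inequality,
\begin{equation*}
\Big\|\int_0^t\!\!\int_{\mathbb{T}^d} |g(s,y)|^2\,\omega_\delta(t-s,x-y)\,dy\,ds\Big\|_{L^{p/2}(\Omega)}
\leq \int_0^t\!\!\int_{\mathbb{T}^d} \big\||g(s,y)|^2\big\|_{L^{p/2}(\Omega)}\,\omega_\delta(t-s,x-y)\,dy\,ds,
\end{equation*}
and $\||g(s,y)|^2\|_{L^{p/2}(\Omega)}=\big(\mathbb{E}|g(s,y)|^p\big)^{2/p}\le \sup_{s,y}\big(\mathbb{E}|g(s,y)|^p\big)^{2/p}$, so the right-hand side is at most $\sup_{s,y}(\mathbb{E}|g(s,y)|^p)^{2/p}\cdot K_\delta(t)$. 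Combining with Lemma~\ref{K2}, $\sup_{t}K_\delta(t)\lesssim K_1(\delta,d)$, and raising to the power $p/2$ gives the claim.

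The main obstacle is the interchange of the $\Omega$-norm with the deterministic space-time integral against the kernel: one must be careful that the BDG estimate produces $\big(\int_0^t\|\cdot\|_{L^2}^2 ds\big)^{p/2}$ with the $L^{p/2}(\Omega)$-norm \emph{outside}, and then Minkowski's integral inequality is exactly the tool to move it inside against the finite measure $\omega_\delta(t-s,x-y)\,dy\,ds$ of total mass $K_\delta(t)$; this requires $p\ge 2$, while for $p\in[1,2)$ one first applies Jensen/H\"older in $\Omega$ to reduce to the case $p=2$. A secondary point to check is that $\omega_\delta(t-s,x-y)=\big((p(t-s,\cdot))\ast\eta_\delta\big)(x-y)^2\ge 0$ and that $\int_0^t\!\!\int_{\mathbb{T}^d}\omega_\delta(t-s,x-y)\,dy\,ds$ is precisely $\int_0^t\|P_{t-s}\eta_\delta\|_{L^2(\mathbb{T}^d)}^2\,ds=K_\delta(t)$, which is the identity already derived in the proof of Lemma~\ref{K2} (using $\int_{\mathbb{T}^d} p(r,z-y)\,\eta_\delta(y)\,dy=(P_r\eta_\delta)(z)$ and Fubini); no new heat-kernel computation is needed beyond what Lemma~\ref{K2} supplies. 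The uniformity in $x$ is automatic since $K_\delta(t)$ is independent of $x$, as remarked after \eqref{K2i}.
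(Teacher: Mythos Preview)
Your overall architecture (BDG/$L^p$-isometry, then Minkowski's integral inequality to move the $L^{p/2}(\Omega)$-norm inside against a nonnegative deterministic kernel of total mass $K_\delta(t)$, then Lemma~\ref{K2}) is exactly the paper's strategy. However, there is a genuine error in your first step that propagates to the wrong kernel.

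You claim, by ``self-adjointness of convolution with $\eta_\delta$'', that
\[
\big\langle p(t-s,x-\cdot),\,g_\delta(s)\,e_{k,\theta}\big\rangle
=\big\langle (p(t-s,x-\cdot))\ast\eta_\delta,\,g(s)\,e_{k,\theta}\big\rangle .
\]
This is false: since $g_\delta(s)f=g(s)\,(\eta_\delta\ast f)$, the adjoint is $g_\delta(s)^*h=\eta_\delta\ast(g(s)\,h)$, so the convolution lands on the \emph{product} $g(s)\,p(t-s,x-\cdot)$, not on $p$ alone. The correct quadratic variation is therefore
\[
\big\|p(t-s,x-\cdot)\,g(s,\cdot)\big\|_\delta^2
=\int_{\mathbb{T}^{2d}}p(t-s,x-y_1)\,p(t-s,x-y_2)\,g(s,y_1)\,g(s,y_2)\,R_\delta(y_1-y_2)\,dy_1\,dy_2,
\]
and \emph{not} $\int_{\mathbb{T}^d}|g(s,y)|^2\,\omega_\delta(t-s,x-y)\,dy$ with your $\omega_\delta(r,z)=((p(r,\cdot)\ast\eta_\delta)(z))^2$. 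There is no general inequality allowing you to pass from the former to the latter.

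Once you replace your single-variable kernel $\omega_\delta$ by the correct nonnegative two-variable measure $p(t-s,x-y_1)p(t-s,x-y_2)R_\delta(y_1-y_2)\,dy_1\,dy_2\,ds$ (nonnegativity of $R_\delta$ is recorded below~\eqref{e-7}), your Minkowski step goes through verbatim, with the additional input
\[
\big(\mathbb{E}|g(s,y_1)g(s,y_2)|^{p/2}\big)^{2/p}\leq\big(\mathbb{E}|g(s,y_1)|^p\big)^{1/p}\big(\mathbb{E}|g(s,y_2)|^p\big)^{1/p}\leq\sup_{s,y}\big(\mathbb{E}|g(s,y)|^p\big)^{2/p},
\]
and the total mass of this measure is precisely $K_\delta(t)$ by definition~\eqref{K2i}. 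This corrected argument is the paper's proof.
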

\begin{proof}
It is sufficient to consider the case $d\geq2$, since $d=1$ can treated analogously. For $p\geq2$, thanks to the $L^p$-isometry of the stochastic integral (see \cite[Corollary 3.11]{NW}), and applying Minkowski's inequality, we observe that
\begin{align*}
&\mathbb{E}|u(t,x)|^p=\varepsilon^{\frac{p}{2}}\mathbb{E}\Big|\int^t_0\langle p(t-s,x-\cdot),g(s,\cdot)dW^{\delta}(s)\rangle\Big|^p\\
\leq&C(p)\varepsilon^{\frac{p}{2}}\mathbb{E}\Big|\int_0^t\int_{\mathbb{T}^{2d}}p(t-s,x-y_1)p(t-s,x-y_2)g(s,y_1)g(s,y_2)R_{\delta}(y_1-y_2)dy_1dy_2ds\Big|^{\frac{p}{2}}\\
\leq&C(p)\varepsilon^{\frac{p}{2}}\Big|\int_0^{t}\int_{\mathbb{T}^{2d}}|p(t-s,x-y_1)p(t-s,x-y_2)|(\mathbb{E}g(s,y_1)g(s,y_2)|^{\frac{p}{2}})^{\frac{2}{p}}R_{\delta}(y_1-y_2)dy_1dy_2ds\Big|^{\frac{p}{2}}.
\end{align*}
Using H\"older's inequality, we find that  
\begin{align}\label{holder-coefficient}
\Big(\mathbb{E}|g(s,y_1)g(s,y_2)|^{\frac{p}{2}}\Big)^{\frac{2}{p}}\leq\Big[(\mathbb{E}|g(s,y_1)|^p)^{\frac{1}{2}}(\mathbb{E}|g(s,y_2)|^p)^{\frac{1}{2}}\Big]^{\frac{2}{p}}\lesssim\Big(\sup_{s\in[0,T],y\in\mathbb{T}^d}\mathbb{E}|g(s,y)|^p\Big)^{\frac{2}{p}}.
\end{align}
Thanks to Lemma \ref{K2}, it follows that
\begin{align*}
\sup_{t\in[0,T],x\in\mathbb{T}^d}\mathbb{E}|u(t,x)|^p\leq&\varepsilon^{\frac{p}{2}}C(p)\Big(\sup_{t\in[0,T],x\in\mathbb{T}^d}\mathbb{E}|g(t,x)|^p\Big)\Big(\sup_{t\in[0,T]}K_{\delta}(t)^{\frac{p}{2}}\Big)\\
\leq&C(p)(\varepsilon K_1(\delta,d))^{\frac{p}{2}}\sup_{t\in[0,T],x\in\mathbb{T}^d}\mathbb{E}|g(t,x)|^p. 
\end{align*}
Then we complete the proof by using H\"older's inequality to see the same estimate holds for $p\in[1,2)$.  

\end{proof}

\subsection{The case of conservative noise}\label{subsec-4-2}
In the study of the conservative SHE (\ref{SHE01}), we will employ the framework introduced in \cite{krylovlp, JML, NW}, with a key role played by a generalization of the Littlewood-Paley inequality. We state the result below.
\begin{lemma}\label{gen-littlewoodpaley}\cite{krylovlp,JML}
	Let $\mathcal{H}$ be a Hilbert space, and $p\in(2,+\infty)$. Let $\{S(t)\}_{t\geq0}$ be the heat semi-group. For every $f\in L^p(\Omega\times[0,T]\times\mathbb{T}^d;\mathcal{H})$, there is a constant $C=C(p)$, such that, 
	\begin{align}\label{littlewood-paley}
	\mathbb{E}\Big(\int^T_0\int_{\mathbb{T}^d}\Big(\int^t_0\|(\nabla S(t-s)f(s))(x)\|_{\mathcal{H}}^2ds\Big)^{\frac{p}{2}}dx dt\Big)\leq C\mathbb{E}\|f\|_{L^p([0,T]\times\mathbb{T}^d;\mathcal{H})}^p.
\end{align}

\end{lemma}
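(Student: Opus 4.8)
The plan is to read the left-hand side as the $p$-th power of an $L^p([0,T]\times\mathbb T^d)$-norm of a single parabolic square-function operator applied to $f$, and to bound that operator by treating $p=2$ by a spectral identity and then $p\in(2,\infty)$ by vector-valued Calder\'on--Zygmund theory in the parabolic scaling. Since the operator is deterministic and linear in $f$, it is enough to prove a pathwise inequality and integrate over $\Omega$; so I would fix a deterministic $f\in L^p([0,T]\times\mathbb T^d;\mathcal H)$, extend it by zero outside $[0,T]$, put $\mathcal K:=L^2((0,T);\mathbb R^d\otimes\mathcal H)$ (again a separable Hilbert space; I write $\|\cdot\|$ for every Hilbert norm, identifying $\mathbb R^d\otimes\mathcal H\cong\mathcal H^d$), and set
\[ (\mathcal Tf)(t,x)(s):=\mathbf 1_{\{s<t\}}\,\bigl(\nabla S(t-s)f(s)\bigr)(x),\qquad s\in(0,T). \]
Then $\|(\mathcal Tf)(t,x)\|_{\mathcal K}^2=\int_0^t\|(\nabla S(t-s)f(s))(x)\|^2\,ds$, so the lemma reduces to $\|\mathcal Tf\|_{L^p([0,T]\times\mathbb T^d;\mathcal K)}\le C(p)\|f\|_{L^p([0,T]\times\mathbb T^d;\mathcal H)}$, which by causality is insensitive to whether time runs over $[0,T]$ or all of $\mathbb R$.

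For $p=2$ this should come out as an equality up to a constant. Writing $\nabla S(\tau)=\sum_{k,\theta}e^{-\alpha_k\tau}\langle e_{k,\theta},\cdot\rangle\,\nabla e_{k,\theta}$, using Parseval in $x$ together with $\langle\nabla e_{k,\theta},\nabla e_{k',\theta'}\rangle_{L^2(\mathbb T^d)}=\alpha_k\delta_{kk'}\delta_{\theta\theta'}$ (the zero mode dropping out), one gets $\int_{\mathbb T^d}\|(\nabla S(\tau)f(s))(x)\|^2\,dx=\sum_{k\ge1,\theta}\alpha_k e^{-2\alpha_k\tau}\|\langle f(s),e_{k,\theta}\rangle\|^2$, and then Fubini and $\int_0^\infty\alpha_k e^{-2\alpha_k\tau}\,d\tau=\tfrac12$ give
\[ \|\mathcal Tf\|_{L^2([0,T]\times\mathbb T^d;\mathcal K)}^2=\int_0^T\!\!\int_0^{T-s}\sum_{k\ge1,\theta}\alpha_k e^{-2\alpha_k\tau}\|\langle f(s),e_{k,\theta}\rangle\|^2\,d\tau\,ds\le\tfrac12\|f\|_{L^2([0,T]\times\mathbb T^d;\mathcal H)}^2. \]
For $p\in(2,\infty)$ I would regard $\mathcal T$ as a vector-valued singular integral of convolution type in the space--time variable $(t,x)$, built from the derivative heat kernel $\nabla p$ and causal in time, with respect to the parabolic dilations $(\tau,z)\mapsto(\lambda^2\tau,\lambda z)$, for which $\nabla p$ has exactly the critical homogeneity. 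The $L^2$-bound just obtained, together with the parabolic H\"ormander condition for the kernel --- which follows from the Gaussian estimates $|\nabla_x^\ell\partial_\tau^j p(\tau,z)|\lesssim\tau^{-(d+\ell+2j)/2}e^{-c|z|^2/\tau}$ for $\ell\in\{1,2\}$, $j\in\{0,1\}$ (on $\mathbb R^d$, inherited on $\mathbb T^d$ for bounded $\tau$), via integration over parabolic annuli --- should yield, by the Hilbert-space-valued Calder\'on--Zygmund theorem (the target $\mathcal K$ being Hilbert, there is no UMD or geometric obstruction), boundedness of $\mathcal T$ on $L^q$ for every $q\in(1,\infty)$, in particular $q=p$. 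The passage from $\mathbb R^d$ to $\mathbb T^d$ I would handle by periodizing the heat kernel (which keeps the local singularity and only adds a rapidly decaying, harmless tail on $[0,T]$) or by transference, and the restriction to $[0,T]$ is free since $\mathcal T$ is causal and $f$ was extended by zero. Alternatively one may quote this directly from \cite{krylovlp,JML}: $-\Delta$ on $L^p(\mathbb T^d)$ has a bounded $H^\infty$-calculus (e.g.\ from its Gaussian heat-kernel bounds) and $L^p$ has type $2$ for $p\ge2$, which is precisely what underlies the stochastic maximal $L^p$-regularity estimate stated in the lemma.

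The step I expect to be the main obstacle is the Calder\'on--Zygmund argument for $p\ne2$: $\mathcal T$ is not a classical CZ operator with a locally integrable $\mathcal B(\mathcal H,\mathcal K)$-valued kernel, because its kernel is ``diagonal'' in the square-function variable $s$ (a Dirac mass sitting at $s=t-(\text{convolution time})$), the familiar feature of Littlewood--Paley $g$-functions attached to a semigroup. To make this rigorous I would either run the square-function variant of the CZ argument (Stein's semigroup method) or first rewrite $\mathcal T$ through a reproducing-formula identity so that the kernel becomes an honest $\mathcal K$-valued function before the H\"ormander estimates are applied; and throughout, all kernel estimates must be carried out in the anisotropic metric $\rho((t,x),(s,y))=|t-s|^{1/2}+|x-y|$, with the scaling ``time like space squared'' tracked carefully in the cancellation against the decay of $\nabla p$. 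Once those kernel bounds are in place, the remaining steps are routine.
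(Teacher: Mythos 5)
The paper gives no internal proof of this lemma; it is stated with a citation to \cite{krylovlp,JML}, so there is nothing in the text to compare your argument against step by step. What you have written is, however, an accurate blueprint of the argument that lies behind those references, and it is in agreement with the way the lemma is actually established. The reduction from the stochastic statement to a deterministic, pathwise $L^p$-bound for the square-function operator $\mathcal T$ is exactly right, since $\mathcal T$ is linear and deterministic and the expectation is just an integral over $\Omega$. Your $p=2$ computation is correct: integration by parts gives $\langle\nabla e_{k,\theta},\nabla e_{k',\theta'}\rangle=\alpha_k\delta_{kk'}\delta_{\theta\theta'}$, Parseval and $\int_0^\infty\alpha_k e^{-2\alpha_k\tau}\,d\tau=\tfrac12$ yield the $L^2$-bound with constant $\tfrac12$, and the zero mode harmlessly drops out. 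Your framing of the $p>2$ case as a parabolic, $\mathcal B(\mathcal H,\mathcal K)$-valued singular integral, with the heat-kernel gradient $\nabla p$ having critical homogeneity under $(\tau,z)\mapsto(\lambda^2\tau,\lambda z)$ and Gaussian kernel bounds supplying the H\"ormander condition, is also the right picture; one minor bookkeeping point is that to literally obtain a convolution operator in $(t,x)$ you should index the square-function output by $\tau=t-s$ rather than $s$, an isometric relabelling of $\mathcal K$. Most importantly, you correctly identify and honestly flag the genuinely delicate step: the kernel is supported on the diagonal $\{\sigma=t-s\}$ of the square-function parameter, so its $\mathcal B(\mathcal H,\mathcal K)$-norm is infinite pointwise and the textbook H\"ormander condition cannot be applied verbatim. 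Your two proposed fixes are both viable, and the first one is essentially what Krylov does in \cite{krylovlp}: rather than dress the operator up as an abstract vector-valued CZ operator, he runs a Fefferman--Stein sharp-function estimate directly on the scalar square function $g(f)(t,x)=\bigl(\int_0^t\|(\nabla S(t-s)f(s))(x)\|^2ds\bigr)^{1/2}$, decomposing $f$ over a parabolic cube and its complement and using the Gaussian decay of $\nabla p$ for the far part; your second alternative (bounded $H^\infty$-calculus for $-\Delta$ on $L^p$ plus type~2 of $L^p$) is the functional-analytic route of van Neerven--Veraar--Weis. So the proposal is correct as a programme and consistent with the cited sources, but the $p>2$ step is left as an outline rather than a proof: to count as a full proof you would need to either carry out the sharp-function estimate for $g(f)$, or verify the variant H\"ormander condition appropriate to semigroup square functions, or properly invoke the $H^\infty$-calculus theorem with its hypotheses checked for $-\Delta$ on $L^p(\mathbb T^d)$.
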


Let $u$ be the stochastic convolution defined by (\ref{sto-convolution-4}). Using the $L^p$-isometry of the stochastic integral \cite[Corollary 3.11]{NW} to see that  
\begin{align}\label{Lpisometry}
\mathbb{E}\|u\|_{L^p([0,T]\times\mathbb{T}^d)}^p\leq&\varepsilon^{\frac{p}{2}}\mathbb{E}\Big\|\int^{\cdot}_0\nabla S(t-s)g_{\delta}(s)dW(s)\Big\|_{L^p([0,T]\times\mathbb{T}^d)}^p\notag\\
\leq&\varepsilon^{\frac{p}{2}}C(p)\mathbb{E}\Big(\int^T_0\int_{\mathbb{T}^d}\Big(\int^t_0\int_{\mathbb{T}^d}|(\nabla S(t-s)g_{\delta}(s,z))(x)|^2dzds\Big)^{\frac{p}{2}}dx dt\Big)\notag\\
\leq&\varepsilon^{\frac{p}{2}}C(p)\mathbb{E}\Big(\int^T_0\int_{\mathbb{T}^d}\Big(\int^t_0\int_{\mathbb{T}^d}|(\nabla S(t-s)g(s)\eta_{\delta}(\cdot-z))(x)|^2dzds\Big)^{\frac{p}{2}}dx dt\Big).	
\end{align}
Therefore, taking $f(s,x)=g(s,x)\eta_{\delta}(x-\cdot)$ in Lemma \ref{gen-littlewoodpaley}, we have the following estimate.

\begin{lemma}\label{maximalLp-cons}
For every $p\in[1,\infty)$, there is a constant $C=C(p)$, such that,  
	\begin{equation*}
		\mathbb{E}\|u\|_{L^p([0,T]\times\mathbb{T}^d)}^p\leq C\cdot(\varepsilon K_2(\delta,d))^{\frac{p}{2}}\mathbb{E}\|g\|_{L^p([0,T]\times\mathbb{T}^d)}^p.
	\end{equation*}
\end{lemma}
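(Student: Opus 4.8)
The plan is to combine the Littlewood--Paley type estimate of Lemma \ref{gen-littlewoodpaley} (applied along the chain of inequalities \eqref{Lpisometry}) with an $L^2$-estimate in time for the convolution of the regularizing kernel $\eta_\delta$ against the heat semigroup, precisely in analogy with the role Lemma \ref{K2} plays in the nonconservative case. Concretely, starting from the last line of \eqref{Lpisometry}, the spatial variable $z$ appears only through the translate $\eta_\delta(\cdot - z)$, so I would first carry out the $z$-integration: for fixed $s,x$,
\begin{align*}
\int_{\mathbb{T}^d}|(\nabla S(t-s)(g(s)\eta_\delta(\cdot-z)))(x)|^2\,dz
 &= |g(s,x)|^2\int_{\mathbb{T}^d}|(\nabla S(t-s)\eta_\delta(\cdot-z))(x)|^2\,dz
\end{align*}
after pulling out $g(s,\cdot)$ evaluated where it belongs (being careful that $g_\delta(s)$ acts as $f\mapsto g(s)(\eta_\delta\ast f)$, so that modulo the standard localization of the heat kernel the relevant factor is $|g(s,\cdot)|$); more robustly, one bounds $\int_{\mathbb{T}^d}|(\nabla S(t-s)(g(s)\eta_\delta(\cdot-z)))(x)|^2\,dz \lesssim \big(\sup|g(s,\cdot)|\big)^2 \,\|\nabla S(t-s)\eta_\delta\|_{L^2(\mathbb{T}^d)}^2$ is too lossy, so instead I keep $g$ inside and treat $f(s,y)=g(s,y)\eta_\delta(y-z)$ as prescribed just before the lemma statement, applying \eqref{littlewood-paley} with $\mathcal H = L^2(\mathbb{T}^d_z)$ and $\|f(s,y)\|_{\mathcal H}^2 = |g(s,y)|^2 \int_{\mathbb{T}^d}\eta_\delta(y-z)^2\,dz = |g(s,y)|^2\|\eta_\delta\|_{L^2(\mathbb{T}^d)}^2$.

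This is in fact the cleanest route: take $\mathcal H = L^2(\mathbb{T}^d)$ (the $z$-space) and $f(s,y) = g(s,y)\,\eta_\delta(y-\cdot) \in \mathcal H$ in Lemma \ref{gen-littlewoodpaley}. Then \eqref{Lpisometry} gives
\begin{align*}
\mathbb{E}\|u\|_{L^p([0,T]\times\mathbb{T}^d)}^p
 \leq \varepsilon^{\frac p2} C(p)\,\mathbb{E}\|f\|_{L^p([0,T]\times\mathbb{T}^d;\mathcal H)}^p
 = \varepsilon^{\frac p2} C(p)\,\|\eta_\delta\|_{L^2(\mathbb{T}^d)}^p\,\mathbb{E}\|g\|_{L^p([0,T]\times\mathbb{T}^d)}^p,
\end{align*}
since $\|f(s,y)\|_{\mathcal H} = |g(s,y)|\,\|\eta_\delta\|_{L^2(\mathbb{T}^d)}$. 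It therefore remains to estimate $\|\eta_\delta\|_{L^2(\mathbb{T}^d)}^2$. Using the rescaling $\eta_\delta(\cdot)=\delta^{-d}\eta_1(\cdot/\delta)$ noted in the preliminaries, one gets $\|\eta_\delta\|_{L^2(\mathbb{T}^d)}^2 = \delta^{-2d}\int \eta_1(x/\delta)^2\,dx = \delta^{-d}\|\eta_1\|_{L^2}^2 \lesssim \delta^{-d} = K_2(\delta,d)$ (alternatively, via Parseval and \eqref{propertyofeta}: $\sum_{j}\langle\eta_\delta,e_{j,\theta}\rangle^2 \lesssim \sum_j (1+(\delta^2\alpha_j)^n)^{-2} \lesssim \sum_j (1+(\delta^2 j^{2/d})^n)^{-2} \lesssim \delta^{-d}$ by comparison with an integral, using $n > (d-2)/4 \geq 0$, hence $2n > (d-2)/2$, which is more than enough for convergence of $\int_{\delta^d}^\infty x^{-2n/d}\,dx \cdot$ — one checks the exponent carefully). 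Plugging this in yields the claim for $p\in(2,\infty)$.

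Finally I would handle $p\in[1,2]$ by Hölder's inequality in $\Omega\times[0,T]\times\mathbb{T}^d$, exactly as at the end of the proof of Lemma \ref{maximalLp-noncons}: bounding the $L^p$ norm by the $L^{p'}$ norm for some $p'>2$ (the measure being finite) reduces to the case already proved, and the constants absorb the finite-measure factor. The main obstacle, such as it is, is purely bookkeeping: making sure the substitution $f(s,y)=g(s,y)\eta_\delta(y-\cdot)$ is legitimate in Lemma \ref{gen-littlewoodpaley} — i.e.\ that $f \in L^p(\Omega\times[0,T]\times\mathbb{T}^d;\mathcal H)$, which follows from \eqref{Lpforg} together with $\eta_\delta\in L^2(\mathbb{T}^d)$ — and that the operator $g_\delta(s)\,dW(s)$ in \eqref{sto-convolution-2} produces, under the $L^p$-isometry \cite[Corollary 3.11]{NW}, exactly the Hilbert--Schmidt-type expression with $\mathcal H = L^2(\mathbb{T}^d)$ appearing in the last line of \eqref{Lpisometry}; both are routine given the setup already laid out.
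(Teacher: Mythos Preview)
Your proposal is correct and follows essentially the same route as the paper: apply Lemma \ref{gen-littlewoodpaley} with $f(s,y)=g(s,y)\eta_\delta(y-\cdot)$ (exactly as the paper suggests just before the lemma), note $\|f(s,y)\|_{\mathcal H}=|g(s,y)|\,\|\eta_\delta\|_{L^2}$, and then bound $\|\eta_\delta\|_{L^2}^2\lesssim\delta^{-d}=K_2(\delta,d)$ via the rescaling $\eta_\delta=\delta^{-d}\eta_1(\cdot/\delta)$. Your explicit treatment of $p\in[1,2]$ via H\"older is a minor addition the paper's proof leaves implicit.
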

\begin{proof}
In combination of (\ref{Lpisometry}) and (\ref{littlewood-paley}), we deduce that 
	\begin{align}\label{computation-1}
\mathbb{E}\|u\|_{L^p([0,T]\times\mathbb{T}^d)}^p\leq C(p)\varepsilon^{\frac{p}{2}}\mathbb{E}\int^T_0\int_{\mathbb{T}^d}\Big(\int_{\mathbb{T}^d}g(s,y)^2\eta_{\delta}(y-z)^2dz\Big)^{\frac{p}{2}}dyds.
\end{align}
By the definition of $\eta_{\delta}$ and by changing variables, 
\begin{align}\label{computation-2}
\mathbb{E}\int^T_0\int_{\mathbb{T}^d}\Big(\int_{\mathbb{T}^d}g(s,y)^2\eta_{\delta}(y-z)^2dz\Big)^{\frac{p}{2}}dyds=&\varepsilon^{\frac{p}{2}}\mathbb{E}\int^T_0\int_{\mathbb{T}^d}|g(s,y)|^pdy\Big(\int_{\mathbb{T}^d}\eta_{\delta}(z)^2dz\Big)^{\frac{p}{2}}ds\notag\\
=&\varepsilon^{\frac{p}{2}}\mathbb{E}\int^T_0\int_{\mathbb{T}^d}|g(s,y)|^pdy\Big(\int_{\mathbb{R}^d}\delta^{-2d}\tilde{\eta}(z/\delta)^2dz\Big)^{\frac{p}{2}}ds\notag\\
=&\varepsilon^{\frac{p}{2}}\mathbb{E}\int^T_0\int_{\mathbb{T}^d}|g(s,y)|^pdy\Big(\int_{\mathbb{R}^d}\delta^{-d}\tilde{\eta}(z)^2dz\Big)^{\frac{p}{2}}ds\notag\\
\lesssim&(\varepsilon\delta^{-d})^{\frac{p}{2}}\mathbb{E}\int^T_0\int_{\mathbb{T}^d}|g(s,y)|^pdyds. 
\end{align}
This completes the proof. 
\end{proof}

\section{Well-posedness for SHE with smooth and non-smooth coefficients}\label{sec-4}
In this section, we will prove the well-posedness of (\ref{SHE02}) and (\ref{SHE01}) with smooth and non-smooth diffusion coefficients. In the case of smooth coefficients, global in time well-posedness results are provided in the Subsection \ref{subsec-smooth}. In the case of non-smooth coefficients, we will show results for local in time well-posedness in the Subsection \ref{subsec-nonsmooth}. 
\subsection{Global in time well-posedness for SHE with smooth diffusion coefficients}\label{subsec-smooth}
We first introduce the definition of mild solution for (\ref{SHE02}) (resp. (\ref{SHE01})). 
\begin{definition}[Mild solution]
	Let $\varepsilon,\delta>0$.  
		\begin{description}
		\item [(i)] An $L^2(\mathbb{T}^d)$-valued $\{\mathcal{F}(t)\}_{t\in[0,T]}$-adapted process $u^{\varepsilon,\delta}$ is called a mild solution of  (\ref{SHE02}) with initial data $u_0\in L^{\infty}(\mathbb{T}^d)$, if almost surely, $u^{\varepsilon,\delta}\in C([0,T];L^2(\mathbb{T}^d))$,  $G(u^{\varepsilon,\delta}) \in L^2([0,T]\times\Omega\times \mathbb{T}^d)$
and, $\mathbb{P}$-almost surely,
	\begin{equation}\label{mild-1}
u^{\varepsilon,\delta}(t,x)=(p(t)\ast u_0)(x)+\varepsilon^{\frac{1}{2}}\int_0^t\langle p(t-s,x-\cdot),G(u^{\varepsilon,\delta}(s,\cdot))dW^{\delta}(s)\rangle,
\end{equation}
for every $t\in[0,T]$.

\item [(ii)] An $L^2(\mathbb{T}^d)$-valued $\{\mathcal{F}(t)\}_{t\in[0,T]}$-adapted process $u^{\varepsilon,\delta}$ is called a mild solution of  (\ref{SHE01}) with initial data $u_0\in L^{\infty}(\mathbb{T}^d)$, if almost surely, $u^{\varepsilon,\delta}\in C([0,T];H^{-1}(\mathbb{T}^d))$, and we have that for almost every $t\in[0,T]$, $G(u^{\varepsilon,\delta}) \in L^2([0,T]\times\Omega\times \mathbb{T}^d)$

and, $\mathbb{P}$-almost surely,
	\begin{equation}\label{mild-1-cons}
u^{\varepsilon,\delta}(t,x)=(p(t)\ast u_0)(x)+\varepsilon^{\frac{1}{2}}\int_0^t\langle \nabla_x p(t-s,x-\cdot),G(u^{\varepsilon,\delta}(s,\cdot))dW^{\delta}(s)\rangle,
\end{equation}
for every $t\in[0,T]$.
	\end{description}
	
	\end{definition}

In the case of nonconservative noise, $d=1$, we take $\delta=0$ and let $u^{\varepsilon}=u^{\varepsilon,0}$, $W=W_{0}$. 

\begin{remark}
We emphasize that by using Lemma \ref{gen-littlewoodpaley} and Lemma \ref{maximalLp-cons}, the square integrability conditions on $G(u^{\varepsilon,\delta})$ are sufficient to guarantee that the stochastic integrals in (\ref{mild-1}) and (\ref{mild-1-cons}) are well-defined. 
\end{remark}

The existence and uniqueness of the mild solution to  (\ref{SHE02}) is well known, see, for example, \cite{Daprato}, \cite{walsh}. 

In the following, for this sake, we introduce an $H^{-1}$-variational framework. Recall that $\{e_{k,\theta}\}_{k\geq0,\theta=1,2}$ is the orthonormal basis of $L^2(\mathbb{T}^d)$.  For any $m\geq0$, let  
\begin{align}\label{sobolev-norm}
  	H^{m}(\mathbb{T}^d)=&\Big\{f\in L^2(\mathbb{T}^d):\sum_{k\geq0,\theta=1,2}\alpha_k^m|\langle f,e_{k,\theta}\rangle|^2<\infty\Big\},\notag\\
  	\|f\|_{H^{m}(\mathbb{T}^d)}^2=&\sum_{k\geq0,\theta=1,2}\alpha_k^m|\langle f,e_{k,\theta}\rangle|^2.
  \end{align}
 
For any $m>0$, let $H^{-m}(\mathbb{T}^d)=(H^{m}(\mathbb{T}^d))^*$. Extending the $L^2(\mathbb{T}^d)$-inner product $\langle\cdot,\cdot\rangle$ to the $H^{-m}(\mathbb{T}^d)-H^m(\mathbb{T}^d)$ duality denoted by $(\cdot,\cdot)$, and the norm of an element $f\in H^{-m}(\mathbb{T}^d)$ is given by 
\begin{align*}
\|f\|_{H^{-m}(\mathbb{T}^d)}^2=&\sum_{k\geq1,\theta=1,2}\alpha_k^{-m}|(f,e_{k,\theta})|^2+|(f,e_{0,\theta})|^2.	
\end{align*}

In order to apply the variational approach to (\ref{SHE01}), we introduce the following space. For any $m\geq0$, let 
\begin{align}\label{H-1norm}
  	H^{m}_0(\mathbb{T}^d)=&\Big\{f\in L^2(\mathbb{T}^d):\sum_{k\geq0,\theta=1,2}\alpha_k^m|\langle f,e_{k,\theta}\rangle|^2<\infty,\ \langle f,e_{0,\theta}\rangle=0,\ \theta=1,2\Big\},\notag\\
  	\|f\|_{H^{m}_0(\mathbb{T}^d)}^2=&\sum_{k\geq1,\theta=1,2}\alpha_k^m|\langle f,e_{k,\theta}\rangle|^2.
  \end{align}
  For any $m>0$, let $H^{-m}_0(\mathbb{T}^d)=(H^{m}_0(\mathbb{T}^d))^*$. We denote the extension of the $L^2(\mathbb{T}^d)$-inner product $\langle\cdot,\cdot\rangle$ to the $H^{-m}_0(\mathbb{T}^d)-H^m_0(\mathbb{T}^d)$ duality by $(\cdot,\cdot)$, and the norm of an element $f\in H^{-m}_0(\mathbb{T}^d)$ is given by 
\begin{align*}
\|f\|_{H^{-m}_0(\mathbb{T}^d)}^2=&\sum_{k\geq1,\theta=1,2}\alpha_k^{-m}|(f,e_{k,\theta})|^2.	
\end{align*}

Since for the moment the correlation length $\delta>0$ is fixed, for simplicity, we let $u^{\varepsilon}:=u^{\varepsilon,\delta}$ for $\varepsilon>0$.

\begin{definition}[Global in time $H^{-1}$-variational solution]\label{variation}
Let $\varepsilon>0$, and $u_0\in H^{-1}(\mathbb{T}^d)$. A continuous $H^{-1}(\mathbb{T}^d)$-valued $\{\mathcal{F}(t)\}_{t\in[0,T]}$-adapted process $(u^{\varepsilon}(t))_{t\in[0,T]}$ is called a global in time $H^{-1}(\mathbb{T}^d)$-variational solution of  (\ref{SHE01}) on $[0,T]$, if the following conditions hold.
\begin{description}
\item[(i)]For the $dt\otimes\mathbb{P}$-equivalence class of $u^{\varepsilon}$, denoted by $\hat{u}^{\varepsilon}$, we have  
\begin{equation*}
\hat{u}^{\varepsilon}\in L^2([0,T];L^2(\Omega;L^2(\mathbb{T}^d))).
\end{equation*}
\item[(ii)] $\mathbb{P}$-almost surely,
\begin{align}\label{variational-solu}
u^{\varepsilon}(t)=&u_0+\int^t_0\Delta \bar{u}^{\varepsilon}(s)ds+\varepsilon^{\frac{1}{2}}\int^t_0\nabla\cdot(G(\bar{u}^{\varepsilon}(s))dW^{\delta}(s))
\end{align}
holds in $(L^2(\mathbb{T}^d))^*$ for every $t\in[0,T]$, where $\bar{u}^{\varepsilon}$ is any $L^2(\mathbb{T}^d)$-valued progressively measurable $dt\otimes\mathbb{P}$-version of $\hat{u}^{\varepsilon}$.
\end{description}

\end{definition}
Due to the fact that (\ref{SHE01}) takes the form of a conservation law, the preservation of the mean value for the solution of (\ref{SHE01}) holds, by which we mean that for any $\varepsilon, \delta>0$, let $u^{\varepsilon,\delta}$ be a $H^{-1}$-variational solution of (\ref{SHE01}), then for every $t\in[0,T]$, it follows that almost surely, 
\begin{align*}
(u^{\varepsilon,\delta}(t),e_{0,\theta})=\int_{\mathbb{T}^d}u^{\varepsilon,\delta}(t,x)dx=\int_{\mathbb{T}^d}u_0(x)dx=(u_0,e_{0,\theta}). 
\end{align*}
Inspired by this property, the mean-zero Sobolev spaces $H^{m}_0(\mathbb{T}^d)$, $m\in\mathbb{R}$, are employed to study the well-posedness of (\ref{SHE01}). 
\begin{lemma}\label{variationwellposed}
Let $G$ and $u_0$ satisfy Hypothesis H1 and H2. Let $\delta>0$, $\varepsilon_0=2\delta^d\|G^{(1)}(\cdot)\|_{L^{\infty}(\mathbb{R})}^{-2}>0$. Then for every $\varepsilon\in(0,\varepsilon_0)$,  (\ref{SHE01}) admits a unique $H^{-1}(\mathbb{T}^d)$-variational solution in the sense of Definition \ref{variation}. Moreover, the variational solution $u^{\varepsilon}$ satisfies the mild form (\ref{mild-1-cons}). 
\end{lemma}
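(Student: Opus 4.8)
The plan is to apply the variational theory of monotone SPDEs (in the style of Liu--Röckner, \cite{LR}) in the Gelfand triple $H^1_0(\mathbb{T}^d) \hookrightarrow L^2(\mathbb{T}^d) \hookrightarrow H^{-1}_0(\mathbb{T}^d)$. Write \eqref{SHE01} in the abstract form $du = A(u)\,dt + B(u)\,dW^\delta$, where $A(u) = \Delta u$ is viewed as a bounded linear map $L^2(\mathbb{T}^d) \to H^{-2}_0(\mathbb{T}^d)$, but — crucially — the gradient structure of the noise means the natural state space is $H^{-1}_0$, not $L^2$. The key observation is that the It\^o correction generated by the conservative, $\delta$-mollified noise is itself a second-order operator: formally, the quadratic variation of $\varepsilon^{1/2}\nabla\cdot(G(u)\,dW^\delta)$ contributes a term of the form $\tfrac{\varepsilon}{2}\,\nabla\cdot\big(G'(u)^2\,(\text{something of order }\delta^{-d})\,\nabla u\big)$ plus lower-order terms. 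I would make this precise by expanding $\nabla\cdot(G(u)\eta_\delta * \cdot)$ in the cylindrical-noise representation \eqref{smooth-1} and computing $\sum_{k,\theta}\|\nabla\cdot(G(u)(\eta_\delta * e_{k,\theta}))\|_{H^{-1}_0}^2$. The leading term carries a factor $\|\eta_\delta\|_{L^2}^2 \sim \delta^{-d}$ times $\|G'\|_\infty^2\|\nabla u\|_{L^2}^2$ (this is exactly the same computation as in \eqref{computation-2}), which is why the smallness condition $\varepsilon < \varepsilon_0 = 2\delta^d\|G'\|_\infty^{-2}$ appears.

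The key steps, in order: (1) Verify \emph{hemicontinuity} of $\lambda \mapsto {}_{H^{-1}_0}\langle A(u+\lambda v), w\rangle_{H^1_0}$, which is immediate since $A$ is linear and bounded. (2) Verify \emph{weak monotonicity}: for $u, v \in L^2$, estimate $2{}_{H^{-1}_0}\langle \Delta u - \Delta v, u-v\rangle_{H^1_0} + \|B(u)-B(v)\|_{L_2}^2$ in the $H^{-1}_0$--$H^1_0$ pairing. The first term equals $-2\|u-v\|_{L^2}^2$ (integration by parts; here the mean-zero constraint makes $-\Delta$ invertible so the pairing is well-defined and the $H^{-1}_0$--$H^1_0$ duality reduces the $\Delta$ to an $L^2$-norm). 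The noise term, by the computation sketched above, is bounded by $\varepsilon\,\delta^{-d}\,\|G'\|_\infty^2\,\|u-v\|_{L^2}^2 + C\|u-v\|_{H^{-1}_0}^2$; since $\varepsilon\delta^{-d}\|G'\|_\infty^2 < 2$, the $L^2$-terms combine to something $\le -c\|u-v\|_{L^2}^2 + C\|u-v\|_{H^{-1}_0}^2$ with $c > 0$, giving weak monotonicity in $H^{-1}_0$. (3) Verify \emph{coercivity}: $2{}_{H^{-1}_0}\langle \Delta u, u\rangle_{H^1_0} + \|B(u)\|_{L_2}^2 \le -2\|u\|_{L^2}^2 + \varepsilon\delta^{-d}\|G'\|_\infty^2\|u\|_{L^2}^2 + C(1+\|u\|_{H^{-1}_0}^2)$ — again the smallness of $\varepsilon$ ensures the $\|u\|_{L^2}^2$ term has a good sign, yielding $\le -\kappa\|u\|_{L^2}^2 + C(1 + \|u\|_{H^{-1}_0}^2)$. (4) Verify \emph{boundedness/growth} of $A$ and $B$: $\|A(u)\|_{H^{-2}_0} \lesssim \|u\|_{L^2}$ and $\|B(u)\|_{L_2(\cdot;H^{-1}_0)}^2 \lesssim 1 + \|u\|_{L^2}^2$ using $\|G^{(l)}\|_\infty < \infty$ from Hypothesis H2. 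With (1)--(4) in hand, the standard existence-and-uniqueness theorem (\cite[Thm. 4.2.4]{LR} or similar) yields a unique continuous $H^{-1}_0$-valued variational solution with $\hat u^\varepsilon \in L^2([0,T]\times\Omega; L^2(\mathbb{T}^d))$, and since $u_0 \in L^\infty \subset L^2$ has a fixed mean, one reduces to mean-zero by subtracting the constant and works in $H^{-1}_0$; this gives the statement for $u_0 \in H^{-1}$ as claimed. (5) Finally, to show the variational solution coincides with the mild form \eqref{mild-1-cons}: apply the variational identity \eqref{variational-solu} tested against $S(t-s)$-smoothed test functions, or equivalently use the stochastic Fubini theorem together with the semigroup property to convert the integral equation in $H^{-1}_0$ into the convolution representation; the square-integrability $G(u^\varepsilon) \in L^2([0,T]\times\Omega\times\mathbb{T}^d)$ plus Lemma \ref{maximalLp-cons} justifies all the manipulations.

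I expect the \textbf{main obstacle} to be step (2)--(3): correctly identifying that the conservative It\^o noise contributes a \emph{second-order} dissipative-or-antidissipative operator rather than a zeroth-order term, and tracking the precise constant $\varepsilon_0 = 2\delta^d\|G'\|_\infty^{-2}$ through the computation $\sum_{k,\theta}\|\nabla\cdot(G(u)(\eta_\delta * e_{k,\theta}))\|_{H^{-1}_0}^2$. The subtlety is that $\|\nabla\cdot(G(u)\eta_\delta * \cdot)\|_{L_2(L^2;H^{-1}_0)}$ is \emph{not} controlled by $\|u\|_{H^{-1}_0}$ alone — it genuinely needs $\|u\|_{L^2}$ (or better), so the "coercivity gap" $-2\|u\|_{L^2}^2$ from $\Delta$ must strictly dominate the noise-induced $+\varepsilon\delta^{-d}\|G'\|_\infty^2\|u\|_{L^2}^2$, and this is exactly where the hypothesis $\varepsilon < \varepsilon_0$ is used (the factor $2$ in $\varepsilon_0$ matching the factor $2$ in $-2\|u\|_{L^2}^2$). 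A secondary technical point is handling the cross terms in the expansion of $\|\nabla\cdot(G(u)(\eta_\delta * e_{k,\theta}))\|^2$ involving $\nabla G(u) = G'(u)\nabla u$ against $G(u)\nabla(\eta_\delta * e_{k,\theta})$; these are lower order after integration by parts but require Hypothesis H2 and care with the $\delta$-dependence to absorb into the $C(1+\|u\|_{H^{-1}_0}^2)$ remainder.
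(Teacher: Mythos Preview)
Your approach is essentially the paper's --- Liu--R\"ockner variational framework, smallness of $\varepsilon$ so that the Laplacian absorbs the It\^o correction --- but two points deserve correction.

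First, the Gelfand triple is misstated. You write $H^1_0 \hookrightarrow L^2 \hookrightarrow H^{-1}_0$, but Definition~\ref{variation} asks for a solution in $C([0,T];H^{-1}) \cap L^2([0,T];L^2)$, so the correct choice is $V = H^0_0(\mathbb{T}^d) = L^2_0$, $H = H^{-1}_0(\mathbb{T}^d)$, with $V^*$ the dual of $L^2_0$ via the $H^{-1}_0$ Riesz isomorphism. Your computations in steps (2)--(3) are actually consistent with this (the pairing ${}_{V^*}\langle\Delta u, u\rangle_V = -\|u\|_{L^2}^2$ is right), but the opening line and the notation ${}_{H^{-1}_0}\langle\cdot,\cdot\rangle_{H^1_0}$ are misleading and would, if taken literally, require $u \in H^1$, which you do not have a priori.

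Second, and more substantively: your anticipated ``main obstacle'' --- cross terms from expanding $\nabla\cdot(G(u)(\eta_\delta * e_{k,\theta}))$ via the product rule, involving $G'(u)\nabla u$ --- does not arise. The paper never differentiates $G(u)$. One simply uses $\|\nabla\cdot f\|_{H^{-1}_0} \leq \|f\|_{L^2}$, so that
\[
\|B_\varepsilon(u^1)-B_\varepsilon(u^2)\|_{\mathcal{L}_2(V,H)}^2 \lesssim \varepsilon\sum_{k,\theta}\big\|(G(u^1)-G(u^2))(\eta_\delta * e_{k,\theta})\big\|_{L^2}^2,
\]
and then Lipschitz continuity of $G$ together with the pointwise identity $\sum_{k,\theta}(\eta_\delta * e_{k,\theta})^2(x) = \|\eta_\delta(x-\cdot)\|_{L^2}^2 \leq \delta^{-d}$ yield the clean bound $\tfrac{1}{2}\varepsilon\delta^{-d}\|G'\|_\infty^2\|u^1-u^2\|_{L^2}^2$ with no lower-order remainder $C\|u-v\|_{H^{-1}_0}^2$ and no need for $\nabla u \in L^2$; see \eqref{HSoperator}--\eqref{H-1unique}. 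This is where the precise value $\varepsilon_0 = 2\delta^d\|G'\|_\infty^{-2}$ falls out. Your reduction to mean-zero by subtracting $\int u_0$ and the final passage from variational to mild form (the paper invokes \cite[Thm.~3.2]{GM}) are both correct.
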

\begin{proof}
Consider the following equation
\begin{align}\label{zeromeaneq}
d\tilde{u}=\Delta\tilde{u}dt+\varepsilon^{\frac{1}{2}}\nabla\cdot(G(\tilde{u}+\int_{\mathbb{T}^d}u_0(x)dx)dW^{\delta}(t)), 	
\end{align}
with initial data $\tilde{u}(0)=u_0-\int_{\mathbb{T}^d}u_0(x)dx$. If $\tilde{u}$ is an $H^{-1}(\mathbb{T}^d)$-variational solution of (\ref{zeromeaneq}), then 
\begin{align*}
u=\tilde{u}+\int_{\mathbb{T}^d}u_0(x)dx	
\end{align*}
is an $H^{-1}(\mathbb{T}^d)$-variational solution of (\ref{SHE01}). Set $G_{u_0}(\zeta)=G(\zeta+\int_{\mathbb{T}^d}u_0(x)dx)$, $\zeta\in\mathbb{R}$, then $G_{u_0}(\cdot)$ satisfies Hypothesis H2 as well. Let $V=H^0_0(\mathbb{T}^d)$, $H=H^{-1}_0(\mathbb{T}^d)$, then $V\subset H\equiv H^*\subset V^*$ is a Gelfand triple, where we have used the Riesz isomorphism $(-\Delta)^{-1}:H\rightarrow H^*$ to identify $H$ with its dual $H^*$. The Laplacian operator $\Delta$ can be extended to a continuous map $\Delta: V\rightarrow V^*$, see \cite[Lemma 4.1.13]{LR}. For any $u\in V$, set $B_{\varepsilon}(u)\cdot=\varepsilon^{\frac{1}{2}}\nabla\cdot(G_{u_0}(u)\eta_{\delta}\ast\cdot)$. Let $\mathcal{L}_2(V,H)$ be the space of Hilbert-Schmidt operators from $V$ to $H$. For any $u^1,u^2\in V$, by \cite[Lemma 3.4 and (5.1)]{KGG} and the definition of the $H^{-1}(\mathbb{T}^d)$-norm, 
\begin{align}\label{HSoperator}
\|B_{\varepsilon}(u^1)-B_{\varepsilon}(u^2)\|_{\mathcal{L}_2(V,H)}^2\lesssim&\varepsilon\sum_{k\geq0,\theta=1,2}\|(G_{u_0}(u^1)-G_{u_0}(u^2))(\eta_{\delta}\ast e_{k,\theta})\|_{V}^2\notag\\
\leq&\frac{\varepsilon\|G_{u_0}^{(1)}(\cdot)\|_{L^{\infty}(\mathbb{R})}^2}{2}\sum_{k\geq0,\theta=1,2}\Big(\int_{\mathbb{T}^d}|u^1-u^2|^2(\eta_{\delta}\ast e_{k,\theta})^2dx\Big).  
\end{align}
By a direct calculation,
\begin{equation*}
\sum_{k\geq0,\theta=1,2}(\eta_{\delta}\ast e_{k,\theta})^2=\sum_{k\geq0,\theta=1,2}\langle\eta_{\delta}(x-\cdot),e_{k,\theta}\rangle^2=\|\eta_{\delta}(x-\cdot)\|_{V}^2\leq\delta^{-d}.
\end{equation*}
Therefore, it follows that   
\begin{align}\label{absorb}
\|B_{\varepsilon}(u^1)-B_{\varepsilon}(u^2)\|_{\mathcal{L}_2(V,H)}^2\lesssim& \frac{\varepsilon\|G_{u_0}^{(1)}(\cdot)\|_{L^{\infty}(\mathbb{R})}^2\delta^{-d}}{2}\|u^1-u^2\|_{V}^2.
\end{align}
The Laplacian term can be treated as a special case of the porous medium operator in \cite{LR}, thus the hemi-continuity condition  holds, that is, $\langle u^1,\Delta(u^1+\lambda u^2)\rangle$ is continuous in $\lambda\in\mathbb{R}$, and the growth condition holds, that is, $\|\Delta u^1\|_{V^*}\lesssim\|u^1\|_V$. See \cite[page 87, (H1), page 88, (H4)]{LR} for more details. Furthermore, we have that 
\begin{align}\label{laplacemonotone}
{}_{V^*}\langle\Delta(u^1-u^2),(u^1-u^2)\rangle_V=-\|u^1-u^2\|_{V}^2.
\end{align}
Choosing $\varepsilon$ such that $\frac{\varepsilon\|G_{u_0}^{(1)}(\cdot)\|_{L^{\infty}(\mathbb{R})}^2\delta^{-d}}{2}<1$, together with (\ref{absorb}), it follows that 
\begin{align}\label{H-1unique}
&{}_{V^*}\langle\Delta(u^1-u^2),(u^1-u^2)\rangle_V+\|B_{\varepsilon}(u^1)-B_{\varepsilon}(u^2)\|_{\mathcal{L}_2(V,H)}^2\notag\\
&\lesssim -\Big(1-\frac{\varepsilon\|G_{u_0}^{(1)}(\cdot)\|_{L^{\infty}(\mathbb{R})}^2\delta^{-d}}{2}\Big)\|u^1-u^2\|_V^2,
\end{align}
which implies the weak monotonicity condition (see \cite[page 70, (H2)]{LR}).  Similar to (\ref{HSoperator}) and (\ref{laplacemonotone}), for $u\in V$, choosing $\varepsilon$ such that $\frac{\varepsilon\|G_{u_0}^{(1)}(\cdot)\|_{L^{\infty}(\mathbb{R})}^2\delta^{-d}}{2}<1$, it follows that
\begin{align*}
{}_{V^*}\langle\Delta u,u\rangle_V+\|B_{\varepsilon}(u)\|_{\mathcal{L}_2(V,H)}^2\leq -\Big(1-\frac{\varepsilon\|G_{u_0}^{(1)}(\cdot)\|_{L^{\infty}(\mathbb{R})}^2\delta^{-d}}{2}\Big)\|u\|_V^2,
\end{align*}
which implies the coercivity condition in \cite[page 70, (H3)]{LR}. As a consequence,  (\ref{zeromeaneq}) fulfills the conditions required in the well-posedness framework of \cite{LR} (also see \cite{Pardoux} \cite{Krylovoriginal}). More precisely, due to \cite[Definition 5.1.2, Theorem 5.1.3]{LR}, (\ref{zeromeaneq})  has a unique $H$-variational solution $\tilde{u}$ in the sense of \cite[Definition 4.2.1]{LR} with $\tilde{u}\in L^2([0,T];L^2(\Omega;V))$ and $\mathbb{P}$-almost surely $\tilde{u}\in C([0,T];H)$. Therefore (\ref{SHE01}) has a unique $H^{-1}(\mathbb{T}^d)$-variational solution $u$ in the sense of Definition \ref{variation}. 
 
Moreover, a direct verification shows that the variational solution $u$ satisfies a weak form of (\ref{SHE01}) given by \cite[Definition 3.1]{GM}. By the assumption of $G$ and \cite[Theorem 3.2]{GM}, it follows that  $u$ satisfies the mild form (\ref{mild-1}). 
\end{proof}

\subsection{Local in time well-posedness for SHE with non-smooth  coefficients}\label{subsec-nonsmooth}
In this section, we prove the local in time well-posedness of  (\ref{SHE02}) and  (\ref{SHE01}) with non-Lipschitz diffusion coefficients in $d$-dimension.

We first introduce the definitions of local in time $H^{-1}(\mathbb{T}^d)$-variational solution of  (\ref{SHE01}), local in time mild solutions of  (\ref{SHE02}),  (\ref{SHE01}), and local in time uniquness.

\begin{definition}[Local in time $H^{-1}$-variational solution]\label{variation-local}
	Let $\varepsilon,\delta>0$, a couple $(u^{\varepsilon,\delta},\tau^{\varepsilon,\delta})$ is called a local in time $H^{-1}$-variational solution of  (\ref{SHE01}) with initial data $u_0\in H^{-1}(\mathbb{T}^d)$, if 
	\begin{description}
\item[(i)]$\tau^{\varepsilon,\delta}$ is an $\{\mathcal{F}(t)\}_{t\in[0,T]}$-stopping time with $\tau^{\varepsilon,\delta}\in(0,T]$, $\mathbb{P}$-almost surely, and $(u^{\varepsilon,\delta}(t))_{t\in[0,\tau^{\varepsilon,\delta}]}$ is an $H^{-1}(\mathbb{T}^d)$-valued $\{\mathcal{F}(t)\}_{t\in[0,T]}$-adapted stochastic process in the sense that 
\begin{eqnarray*}
\tilde{u}^{\varepsilon,\delta}(t)=\left\{
  \begin{array}{ll}
   u^{\varepsilon,\delta}(t)
  &  {\rm{if}}\ t\in[0,\tau^{\varepsilon,\delta}),\\
      u^{\varepsilon,\delta}(\tau^{\varepsilon,\delta}) &{\rm{if}}\ t\in[\tau^{\varepsilon,\delta},T],
  \end{array}
\right.
\end{eqnarray*}
is $H^{-1}(\mathbb{T}^d)$-valued $\{\mathcal{F}(t)\}_{t\in[0,T]}$-adapted.

\item[(ii)]We have $\mathbb{P}$-almost surely,  $u^{\varepsilon,\delta}\in C([0,\tau^{\varepsilon,\delta}];H^{-1}(\mathbb{T}^d))\cap L^2([0,\tau^{\varepsilon,\delta}];L^2(\mathbb{T}^d)),
$
and
\begin{align*}
u^{\varepsilon,\delta}(t\wedge\tau^{\varepsilon,\delta})=&u_0+\int^{t\wedge\tau^{\varepsilon,\delta}}_0\Delta \bar{u}^{\varepsilon,\delta}(s)ds+\varepsilon^{\frac{1}{2}}\int^{t\wedge\tau^{\varepsilon,\delta}}_0\nabla\cdot(G(\bar{u}^{\varepsilon,\delta}(s))dW^{\delta}(s))
\end{align*}
holds in $(L^2(\mathbb{T}^d))^*$ for every $t\in[0,T]$, where $\bar{u}^{\varepsilon,\delta}$ is any $L^2(\mathbb{T}^d)$-valued progressively measurable $dt\otimes\mathbb{P}$-version of $\hat{u}^{\varepsilon,\delta}$. 
\end{description}
\end{definition}

\begin{definition}[Local in time mild solution]\label{mild-local}
	Let $\varepsilon,\delta>0$. A tuple $(u^{\varepsilon,\delta},\tau^{\varepsilon,\delta})$ is called a local in time mild solution of  (\ref{SHE02}) (resp. (\ref{SHE01})) with initial data $u_0\in L^{\infty}(\mathbb{T}^d)$, if 
	\begin{description}
\item[(i)]$\tau^{\varepsilon,\delta}$ is an $\{\mathcal{F}(t)\}_{t\in[0,T]}$-stopping time with $\tau^{\varepsilon,\delta}\in(0,T]$, $\mathbb{P}$-almost surely, and $u^{\varepsilon,\delta}\in C([0,\tau^{\varepsilon,\delta}];L^2(\mathbb{T}^d))$ (resp.\ $u^{\varepsilon,\delta}\in C([0,\tau^{\varepsilon,\delta}];H^{-1}(\mathbb{T}^d))\cap L^2([0,\tau^{\varepsilon,\delta}];L^2(\mathbb{T}^d))$) almost surely, and $(u^{\varepsilon,\delta}(t))_{t\in[0,\tau^{\varepsilon,\delta}]}$ is an $L^2(\mathbb{T}^d)$-valued (resp.\ $H^{-1}(\mathbb{T}^d)$-valued) $\{\mathcal{F}(t)\}_{t\in[0,T]}$-adapted stochastic process.

\item[(ii)] We have that $1_{ [0,\tau^{\varepsilon,\delta}]}G(u^{\varepsilon,\delta}(s,\cdot)) \in L^2([0,T]\times\Omega\times\mathbb{T}^d)$
and, $\mathbb{P}$-almost surely, 
		\begin{equation*}
u^{\varepsilon,\delta}(t\wedge\tau^{\varepsilon,\delta},x)=(p(t\wedge\tau^{\varepsilon,\delta})\ast u_0)(x)+\varepsilon^{\frac{1}{2}}\int_0^{t\wedge\tau^{\varepsilon,\delta}}\langle p(t\wedge\tau^{\varepsilon,\delta}-s,x-\cdot),G(u^{\varepsilon,\delta}(s,\cdot))dW^{\delta}(s)\rangle,
\end{equation*}
holds for every $t\in[0,T]$ and almost every $x\in\mathbb{T}^d$, resp.\ for (\ref{SHE01}),
		\begin{equation*}
u^{\varepsilon,\delta}(t\wedge\tau^{\varepsilon,\delta},x)=(p(t\wedge\tau^{\varepsilon,\delta})\ast u_0)(x)+\varepsilon^{\frac{1}{2}}\int_0^{t\wedge\tau^{\varepsilon,\delta}}\langle \nabla_x p(t\wedge\tau^{\varepsilon,\delta}-s,x-\cdot),G(u^{\varepsilon,\delta}(s,\cdot))dW^{\delta}(s)\rangle.
\end{equation*}
\end{description}

\end{definition}

\begin{definition}[Uniqueness up to time $\tau^{\varepsilon,\delta}$]\label{localuniqueness}
	Let $\varepsilon,\delta>0$, and let $\tau^{\varepsilon,\delta}$ be an $\{\mathcal{F}(t)\}_{t\in[0,T]}$-stopping time such that $\tau^{\varepsilon,\delta}\in(0,T]$, $\mathbb{P}$-almost surely. We say that the local in time solution of (\ref{SHE02}) (resp. (\ref{SHE01})) with initial data $u_0\in L^{\infty}(\mathbb{T}^d)$ is unique up to time $\tau^{\varepsilon,\delta}$, if for any two local in time solutions $(u_1,\tau_1)$, $(u_2,\tau_2)$, we have $\mathbb{P}$-almost surely, $u_1(t,x)=u_2(t,x)$ for almost every $x\in\mathbb{T}^d$ and every $t\in[0,\tau^1\wedge\tau^2\wedge\tau^{\varepsilon,\delta}]$. 
\end{definition}

In order to study the local in time well-posedness of  (\ref{SHE01}), we first introduce an approximation. Assume that Hypothesis H3 holds for the initial data $u_0$ and the diffusion coefficient $G$ , let $\gamma$ be a fixed suitable constant that appears in Hypothesis H3. Let $G_0\in C^{\infty}(\mathbb{R})$ such that
\begin{equation}\label{G0}
G_0(\zeta)=G(\zeta),\ \text{for } \zeta\in\Big[{\rm{ess\inf}}u_0-\gamma,{\rm{ess\sup}}u_0+\gamma\Big],
\end{equation}
and $G_0=0$ on $({\rm{ess\inf}}u_0-\gamma-\gamma',{\rm{ess\sup}}u_0+\gamma+\gamma')^c$, for some $\gamma'>0$.

We then consider the following approximation of (\ref{SHE01})
\begin{equation}\label{approxeq}
d\rho^{\varepsilon,\delta}=\Delta \rho^{\varepsilon,\delta}dt+\varepsilon^{\frac{1}{2}}\nabla\cdot(G_0(\rho^{\varepsilon,\delta})dW^{\delta}(t)),
\quad \rho^{\varepsilon,\delta}(0)=u_0.
\end{equation}
By the $H^{-1}(\mathbb{T}^d)$-variational approach, we obtain the global in time well-posedness for (\ref{approxeq}), see Lemma \ref{variationwellposed}. A similar procedure can be applied to the case of non-conservative noise (\ref{SHE02}). In this case, $\rho^{\varepsilon,\delta}$ is the mild solution of  
\begin{equation}\label{approxeq-2}
d\rho^{\varepsilon,\delta}=\Delta\rho^{\varepsilon,\delta}dt+\varepsilon^{\frac{1}{2}}G_0(\rho^{\varepsilon,\delta})dW^{\delta}(t),\quad \rho^{\varepsilon,\delta}(0)=u_0,
\end{equation}
 where $G_0$ is defined by (\ref{G0}).
 Referring to \cite{walsh}, \cite{Daprato}, we have the well-posedness of (\ref{approxeq-2}). More precisely, the following lemma holds. 

\begin{lemma}\label{variationwellposed-2}
Assume that Hypothesis H3 on the initial data $u_0$ and the diffusion coefficient $G$ holds for some $\gamma>0$. Let $G_0$ be defined by (\ref{G0}). Let $\delta>0$, $\varepsilon_0=2\delta^d\|G_0^{(1)}(\cdot)\|_{L^{\infty}(\mathbb{R})}^{-2}>0$. Then for every $\varepsilon\in(0,\varepsilon_0)$,  there exists a unique $H^{-1}(\mathbb{T}^d)$-variational solution for (\ref{approxeq}) in the sense of Definition \ref{variation}. Moreover, the variational solution is a mild solution of (\ref{approxeq}) as well. Furthermore, for every $\varepsilon>0$, there exists a unique mild solution for (\ref{approxeq-2}).  
\end{lemma}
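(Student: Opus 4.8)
The plan is to reduce Lemma \ref{variationwellposed-2} to two cases that are both almost already in place. For the conservative equation \eqref{approxeq}, the point is that $G_0$ by construction satisfies Hypothesis H2: it is in $C^\infty(\mathbb{R})$, and since $G_0$ is compactly supported (it vanishes outside the bounded interval $({\rm ess\inf}u_0-\gamma-\gamma',{\rm ess\sup}u_0+\gamma+\gamma')$), all its derivatives $G_0^{(l)}$ are bounded. Thus $G_0\in C_b(\mathbb{R})$ together with all derivatives, and in particular $\|G_0^{(1)}(\cdot)\|_{L^\infty(\mathbb{R})}<\infty$, so $\varepsilon_0=2\delta^d\|G_0^{(1)}(\cdot)\|_{L^\infty(\mathbb{R})}^{-2}$ is a well-defined positive number. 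Then Lemma \ref{variationwellposed}, applied with $G$ replaced by $G_0$, yields for every $\varepsilon\in(0,\varepsilon_0)$ a unique $H^{-1}(\mathbb{T}^d)$-variational solution of \eqref{approxeq} in the sense of Definition \ref{variation}, which moreover satisfies the mild form \eqref{mild-1-cons}. This is essentially verbatim the conclusion we want for \eqref{approxeq}, so that half of the statement is immediate.

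For the non-conservative approximation \eqref{approxeq-2}, the plan is to invoke the classical well-posedness theory for semilinear SHEs with globally Lipschitz (indeed bounded, smooth) coefficients. Since $G_0$ and all its derivatives are bounded, $G_0$ is in particular globally Lipschitz continuous and of linear (here, bounded) growth, and the noise $W^\delta$ is a spatially smooth, finite-trace-type perturbation of the cylindrical Wiener process. Hence the standard fixed-point argument in the mild formulation — as in Walsh \cite{walsh} or Da Prato--Zabczyk \cite{Daprato} — applies and gives a unique mild solution $\rho^{\varepsilon,\delta}$ of \eqref{approxeq-2} in $C([0,T];L^2(\mathbb{T}^d))$ for every $\varepsilon>0$. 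I would spell out that the relevant stochastic convolution is well-defined: by Lemma \ref{maximalLp-noncons} with $g(t)\equiv G_0(\rho^{\varepsilon,\delta}(t))$ (which is bounded, hence trivially in all $L^p$), the convolution term has finite $p$-th moments uniformly in $(t,x)$, so the map whose fixed point we seek is a well-defined contraction on a suitable complete metric space (a ball in $C([0,T];L^p(\mathbb{T}^d))$ or with an exponential weight in time), by the Lipschitz bound on $G_0$ together with the smoothing estimate of Lemma \ref{maximalLp-noncons}.

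I would organize the write-up as: (1) observe $G_0$ satisfies Hypothesis H2, so $\|G_0^{(1)}(\cdot)\|_{L^\infty(\mathbb{R})}<\infty$ and $\varepsilon_0>0$; (2) apply Lemma \ref{variationwellposed} to $G_0$ to get existence, uniqueness, and the mild representation for \eqref{approxeq}; (3) for \eqref{approxeq-2}, set up the mild-solution fixed-point map, use boundedness/Lipschitz continuity of $G_0$ and Lemma \ref{maximalLp-noncons} to verify the contraction and a priori bounds, and conclude existence and uniqueness of a mild solution for all $\varepsilon>0$. The main obstacle, to the extent there is one, is purely bookkeeping: one must check that the estimate of Lemma \ref{maximalLp-noncons} — which is stated for the linear equation with a given progressively measurable $g$ — transfers to the quasilinear fixed-point setting, i.e.\ that replacing $g$ by $G_0(\rho_1)-G_0(\rho_2)$ and using $|G_0(a)-G_0(b)|\le \|G_0^{(1)}(\cdot)\|_{L^\infty(\mathbb{R})}|a-b|$ produces a genuine contraction constant (after possibly shrinking $T$ or inserting a time-weight $e^{-\lambda t}$), and that progressive measurability and the required square-integrability $G_0(\rho^{\varepsilon,\delta})\in L^2([0,T]\times\Omega\times\mathbb{T}^d)$ hold — all of which follow from the boundedness of $G_0$. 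Since this is standard and the coefficients here are smoother than in the references cited, I would keep this step brief and refer to \cite{walsh,Daprato}.
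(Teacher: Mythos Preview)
Your proposal is correct and matches the paper's approach exactly: the paper does not give a separate proof of this lemma but simply notes in the surrounding text that $G_0$ satisfies Hypothesis~H2, invokes Lemma~\ref{variationwellposed} for the conservative equation~\eqref{approxeq}, and refers to \cite{walsh,Daprato} for the non-conservative equation~\eqref{approxeq-2}. Your additional remarks on verifying the contraction via Lemma~\ref{maximalLp-noncons} are more detailed than what the paper provides, but entirely in the same spirit.
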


In the following, we provide a regularity estimate for (\ref{approxeq}). 
 \begin{lemma}\label{lem-moser-1}
Under the same hypotheses as Lemma \ref{variationwellposed-2}. Let $\varepsilon,\delta>0$. In the conservative case, we further assume that $\varepsilon<\varepsilon_0$, where $\varepsilon_0$ is the constant that appears in Lemma \ref{variationwellposed-2}. Let $\rho^{\varepsilon,\delta}$ be the variational solution (resp.  mild solution) of  (\ref{approxeq}) (resp.  (\ref{approxeq-2})) with initial data $u_0$. Then we have the following results. 

\begin{description}
\item[(i)]({\rm{Non-conservative\ noise}}) There exists $\tilde{\gamma}=\tilde{\gamma}(d)>0$, $C=C(T)>0$ with $\lim_{T\rightarrow0}C(T)<\infty$, and $\gamma'>0$ independent of $\varepsilon,\delta,T,G_0$, such that 
\begin{align}\label{Linfinity-2}
\mathbb{E}\Big(\|(\rho^{\varepsilon,\delta}-K)^+\|_{L^{\infty}(\mathbb{T}^d\times[0,T])}\Big)+\mathbb{E}\Big(\|(\rho^{\varepsilon,\delta}-K')^-\|_{L^{\infty}(\mathbb{T}^d\times[0,T])}\Big)\leq C(T)\cdot(\varepsilon\delta^{-d}T^{\tilde{\gamma}})^{\gamma'}.
\end{align}
\item[(ii)] ({\rm{Conservative\ noise}}) There exists $\tilde{\gamma}=\tilde{\gamma}(d)>0$, $C=C(T)>0$ with $\lim_{T\rightarrow0}C(T)<\infty$, and $\gamma'>0$ independent of $\varepsilon_0,\delta,T,G_0$, such that 
\begin{align}\label{Linfinity}
\mathbb{E}\Big(\|(\rho^{\varepsilon,\delta}-K)^+\|_{L^{\infty}(\mathbb{T}^d\times[0,T])}\Big)+\mathbb{E}\Big(\|(\rho^{\varepsilon,\delta}-K')^-\|_{L^{\infty}(\mathbb{T}^d\times[0,T])}\Big)\leq C(T)\cdot(\varepsilon \delta^{-d-2}T^{\tilde{\gamma}})^{\gamma'}.
\end{align}
\end{description}

\end{lemma}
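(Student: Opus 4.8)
The plan is to run a Moser iteration directly on the mild formulations \eqref{approxeq} and \eqref{approxeq-2}, exploiting the boundedness of $G_0$ and all its derivatives, which holds globally on $\mathbb{R}$ by construction \eqref{G0}. Write $K = \mathrm{ess\,sup}\,u_0$ and $K' = \mathrm{ess\,inf}\,u_0$; since $G_0 = G$ on $[K'-\gamma, K+\gamma]$ and $G_0 \in C_b^\infty(\mathbb{R})$, all the constants produced below depend only on $d$, $T$, and the sup-norms of $G_0$ and finitely many of its derivatives (hence on $\gamma$, $\gamma'$), but \textit{not} on $\varepsilon$ or $\delta$ except through the explicit prefactor. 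First I would reduce to estimating the positive part $(\rho^{\varepsilon,\delta}-K)^+$; the negative part $(\rho^{\varepsilon,\delta}-K')^-$ is handled symmetrically by replacing $G_0$ with $\zeta \mapsto -G_0(2K'-\zeta)$ or simply by the same argument applied to $-\rho^{\varepsilon,\delta}$ with initial data $-u_0$. Note that $p(t)\ast u_0 \le K$ pointwise by the maximum principle, so $(\rho^{\varepsilon,\delta}-K)^+$ is controlled by the positive part of the stochastic convolution alone.

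The core is an $L^{2^m} \to L^{2^{m+1}}$ energy estimate for the truncated solution. Fix a level $k \ge K$, set $v_k := (\rho^{\varepsilon,\delta}-k)^+$, and apply It\^o's formula (in the variational framework of \cite{LR} for the conservative case, directly for the non-conservative case) to $\|v_k\|_{L^{2q}}^{2q}$ for $q = 2^m$. The parabolic term $\int \Delta \rho^{\varepsilon,\delta} \cdot v_k^{2q-1}$ produces a good coercive term $-c\,q\,\|\nabla (v_k^q)\|_{L^2}^2$ (up to constants polynomial in $q$), the It\^o correction term from the noise is bounded using $|G_0(\rho^{\varepsilon,\delta})| = |G_0(\rho^{\varepsilon,\delta}) - G_0(k)| \le \|G_0'\|_\infty\, v_k$ on $\{v_k > 0\}$ together with the bound $\sum_{j,\theta}(\eta_\delta \ast e_{j,\theta})^2 \le \delta^{-d}$ from the proof of Lemma \ref{variationwellposed} (in the conservative case one extra $\nabla$ falls on the heat kernel, contributing the additional $\delta^{-2}$, i.e.\ the passage from $K_1$-type scaling $\delta^{-d}$ to $K_2$-type $\delta^{-d-2}$ at the level of the iteration — this is where the distinction between \eqref{Linfinity-2} and \eqref{Linfinity} enters), and the martingale term vanishes in expectation after localization. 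Combining with the Gagliardo–Nirenberg–Sobolev embedding $\|w\|_{L^{2\chi}}^2 \lesssim \|\nabla w\|_{L^2}^2 + \|w\|_{L^2}^2$ with $\chi = 1 + 2/d$ yields a recursive inequality of the form
\begin{equation*}
A_{m+1} \le \big(C\, b^m\, \varepsilon\, \delta^{-d}\, T^{\tilde\gamma}\big)^{1/2^m} A_m, \qquad A_m := \Big(\mathbb{E}\,\sup_{t\in[0,T]}\|v_{k_m}(t)\|_{L^{2^m}}^{2^m} + \text{(space-time term)}\Big)^{1/2^m},
\end{equation*}
along a sequence of levels $k_m \uparrow k_\infty$, with $b$ an absolute constant $>1$. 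Iterating and using $\sum_m m/2^m < \infty$, $\sum_m 1/2^m = 1$ gives $A_\infty \le C(T)\,(\varepsilon\,\delta^{-d}\,T^{\tilde\gamma})^{\gamma'}\,A_0$ for some $\gamma' \in (0,1)$, and $A_0$ is controlled since $u_0 \in L^\infty$; this is precisely \eqref{Linfinity-2}, and \eqref{Linfinity} follows by carrying the extra $\delta^{-2}$ through.

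The main obstacle I anticipate is making the It\^o/energy step rigorous for the \emph{conservative} equation: the gradient in front of the noise means the naive chain rule applied to $v_k^{2q}$ produces a term $\int \nabla\cdot(G_0(\rho^{\varepsilon,\delta})\,dW^\delta)\cdot v_k^{2q-1}$ whose quadratic variation, after integrating the gradient by parts onto $v_k^{2q-1}$, is $\varepsilon\int_0^t\!\int \big|\nabla(v_k^{2q-1})\big|^2 |G_0(\rho^{\varepsilon,\delta})|^2 \langle\text{kernel}\rangle_\delta$, and one must absorb this into the coercive term $q\|\nabla(v_k^q)\|_{L^2}^2$ while keeping the $q$-dependence polynomial — this forces the restriction $\varepsilon < \varepsilon_0 = 2\delta^d\|G_0'\|_\infty^{-2}$ for the $m=0$ (variational) layer, consistent with Lemma \ref{variationwellposed-2}, and requires careful bookkeeping of how $\delta^{-2}$ (from the extra derivative on $\eta_\delta$, giving $\|\nabla\eta_\delta\|$-type bounds $\lesssim \delta^{-d-2}$) enters at each iteration level so that it does not compound across $m$. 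A secondary technical point is justifying It\^o's formula for $\|v_k\|_{L^{2q}}^{2q}$ for non-integer-friendly but even integer $2q = 2^{m+1}$: this is standard via a Galerkin/regularization argument as in \cite{LR}, but must be stated, and the supremum in $t$ inside the expectation requires a Burkholder–Davis–Gundy step on the local martingale part, which is where the $\sup_{t}$ in \eqref{Linfinity-2}, \eqref{Linfinity} originates.
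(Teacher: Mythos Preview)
Your overall strategy --- apply It\^o's formula to powers of $(\rho-K)^+$, absorb the gradient-type correction into the coercive Laplacian term using $\varepsilon<\varepsilon_0$, then run a Moser iteration sending the exponent to infinity --- matches the paper's approach and is correct in outline. But the step where you bound the It\^o correction contains a genuine error: you claim $|G_0(\rho)| = |G_0(\rho)-G_0(k)| \le \|G_0'\|_\infty\, v_k$ on $\{v_k>0\}$, and this equality is simply false unless $G_0(k)=0$, which it is not for the relevant levels $k\ge K=\mathrm{ess\,sup}\,u_0$ (by construction $G_0=G$ on $[K'-\gamma,K+\gamma]$, and $G(K)$ need not vanish). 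Were your bound true, the recursion would close by Gronwall starting from $v_K(0)=0$ and yield $(\rho-K)^+\equiv 0$ almost surely for all times, which is absurd for a genuinely stochastic equation.

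The paper handles the zeroth-order It\^o correction differently: it uses only the global bound $|G_0|\le\|G_0\|_\infty$, so that this term becomes $c\,\alpha^2\,\varepsilon\,\delta^{-d-2}\int\psi^{\alpha-1}$ in the conservative case (respectively $c\,\alpha^2\,\varepsilon\,\delta^{-d}\int\psi^{\alpha-1}$ in the non-conservative case), with $\psi=(\rho-K)^+$ at the \emph{fixed} level $K$. The key point is that the right-hand side carries the \emph{lower} power $\alpha-1$, not $\alpha+1$; after the Sobolev/interpolation step this produces a recursion $\mathbb{E}\|\psi\|_{L^{\alpha_k}}\lesssim(\ldots)^{n_{\beta_k}}\bigl(\mathbb{E}\|\psi\|_{L^{\alpha_{k-1}}}\bigr)^{(\beta_k-1)/(\beta_k+1)}$ with $\alpha_k=\tfrac{e+d}{d}(\alpha_{k-1}+2)$, which starts from $\alpha_0=0$ --- so the base factor is $1$, and there is no separate $A_0$ to estimate. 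Your De Giorgi-style scheme of increasing levels $k_m\uparrow k_\infty$ is therefore unnecessary and, as written, does not supply a substitute for the missing bound. Your other technical anticipations --- the need for a preliminary $H^1$-regularity step via Galerkin to justify It\^o's formula in the conservative case, the BDG treatment of the martingale to obtain the time-supremum inside the expectation, and the identification of the extra $\delta^{-2}$ with $\sum_k|\nabla\eta_\delta\ast e_{k,\theta}|^2\lesssim\delta^{-d-2}$ --- are all correct and appear in the paper's proof.
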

\begin{proof}
In this proof, the case of conservative noise produces extra terms compared to the non-conservative case. Therefore, we focus on the conservative case; the proof of the non-conservative case is analogous. For simplicity, we denote the solution $\rho^{\varepsilon,\delta}$ of (\ref{approxeq}) by $\rho$. The proof is divided into two steps. First, we will obtain the $H^1(\mathbb{T}^d)$-regularity of the variational solution $\rho$. With this regularity, the Moser iteration technique can be employed to obtain the $L^{\infty}(\mathbb{T}^d)$-estimate.

{\bf{Step 1. $L^2(\mathbb{T}^d)$-estimate of the variational solution.}}  Recall that $\{e_{k,\theta}\}_{k\geq0,\theta=1,2}$ are the eigenvectors of the Laplacian operator. For every $k\geq0,\ \theta=1,2$, set $\bar{e}_{k,\theta}=\Delta e_{k,\theta}=-\alpha_ke_{k,\theta}$. For every $m\in\mathbb{N}_+$, set $H_m=\text{span}\{(\bar{e}_{k,\theta})_{k\leq m,\theta=1,2}\}$. Recall that $V=L^2(\mathbb{T}^d)$. Let the projection operator $P_m: V^*\rightarrow H_m$ be defined by 
\begin{equation*}
P_mf=\sum_{k\leq m,\theta=1,2}\ _{V^*}\langle f,\bar{e}_{k,\theta}\rangle_V \bar{e}_{k,\theta}, 	\ \ f\in V^*.
\end{equation*}
For every $K\in\mathbb{N}_+$, consider the project equation on  $H_m\subset V^*$,  
\begin{align}\label{m-proj}
	d\rho_{m}=P_m\Delta\rho_{m}dt+\varepsilon^{\frac{1}{2}}P_m\nabla\cdot(G_0(\rho_{m})dW^{\delta,m}),\ \ \rho_{m}(0)=P_mu_0,  
\end{align}
where $W^{\delta,m}=\sum_{k\leq m,\theta=1,2}\beta_{k,\theta}(\eta_{\delta}\ast e_{k,\theta})$. Then for every $k\leq m$, we have that 
\begin{align*}
_{V^*}\langle\rho_m(t),\bar{e}_{k,\theta}\rangle_V=&_{V^*}\langle P_mu_0,\bar{e}_{k,\theta}\rangle_V+\int^t_0\ _{V^*}\langle P_m\Delta \rho_m(s),\bar{e}_{k,\theta}\rangle_Vds\\
&+\varepsilon^{\frac{1}{2}}\ _{V^*}\langle\int^t_0 P_m\nabla\cdot(G_0(\rho_m(s))dW^{\delta^m}(s)),\bar{e}_{k,\theta}\rangle_V.
\end{align*}
This implies that for every $k\leq m,\ \theta=1,2$, 
\begin{align}\label{SDEsystem}
\langle\rho_m(t),e_{k,\theta}\rangle=&\langle P_mu_0,e_{k,\theta}\rangle+\int^t_0\langle \rho_m(s),\Delta e_{k,\theta}\rangle ds\notag\\
&+\varepsilon^{\frac{1}{2}}\langle\int^t_0 \nabla\cdot(G_0(\rho_m(s))dW^{\delta,m}(s)),e_{k,\theta}\rangle, 
\end{align}
where $\langle\cdot,\cdot\rangle$ denotes the $L^2(\mathbb{T}^d)$-inner product. Notice that (\ref{SDEsystem}) can be rewritten into a finite-dimensional stochastic differential equation of $(\rho_{m,k,\theta}=\langle\rho_m,e_{k,\theta}\rangle)_{k\leq m,\theta=1,2}$. Solving (\ref{m-proj}) is equivalent to solving (\ref{SDEsystem}). By the classical theory of stochastic differential equations, see for example, \cite{KS0}, there exists a unique solution $(\rho_{m,k})_{k\leq m}$ of (\ref{SDEsystem}), and thus (\ref{m-proj}) is well-posed in $H_m$. Applying It\^o's formula to $\rho_{m}$, it follows that for every $t\in[0,T]$, 
\begin{align*}
&\frac{1}{2}\|\rho_{m}(t)\|_{L^2(\mathbb{T}^d)}^2+\int_0^{t}\|\nabla\rho_{m}(s)\|_{L^2(\mathbb{T}^d)}^2ds	\\
=&\frac{1}{2}\|P_mu_0\|_{L^2(\mathbb{T}^d)}^2+\varepsilon^{\frac{1}{2}}\int_0^{t}\int_{\mathbb{T}^d}\rho_{m}P_m\nabla\cdot(G_0(\rho_{m})dW^{\delta,m}(s))dx\\
&+\frac{\varepsilon}{2}\int_0^{t}\int_{\mathbb{T}^d}G_0^{(1)}(\rho_{m})^2|\nabla\rho_{m}|^2F_1^m(\delta)dxds+\frac{\varepsilon}{2}\int_0^{t}\int_{\mathbb{T}^d}G_0(\rho_{m})^2F_3^m(\delta)dxds,
\end{align*}
where
\begin{align*}
&F_1^m(\delta)=\sum_{k\leq m,\theta=1,2}(\eta_{\delta}\ast e_{k,\theta})^2\lesssim\delta^{-d},\quad F_3^m(\delta)=\sum_{k\leq m,\theta=1,2}|\nabla \eta_{\delta}\ast e_{k,\theta}|^2\lesssim\delta^{-d-2}.
\end{align*}
Thanks to the boundedness of $G_0$ and $G^{(1)}_0(\cdot)$, taking expectations, choosing $\varepsilon_0=\|G^{(1)}_0\|_{L^{\infty}(\mathbb{R})}^{-1}\delta^{d}$, we find that for every $\varepsilon\in(0,\varepsilon_0)$,  
\begin{align}\label{H1-reg-N}
	\frac{1}{2}\mathbb{E}\|\rho_{m}(t)\|_{L^2(\mathbb{T}^d)}^2+\mathbb{E}\int_0^{t}\|\nabla\rho_{m}(s)\|_{L^2(\mathbb{T}^d)}^2ds\lesssim\frac{1}{2}\|u_0\|_{L^2(\mathbb{T}^d)}^2+\frac{\varepsilon}{2}\|G_0\|_{L^{\infty}(\mathbb{R})}^2\delta^{-d-2}T.
\end{align}
With the help of the proof of \cite[Theorem 5.1.3]{LR}, a compactness argument and the uniqueness of (\ref{approxeq}) show that there exists a subsequence (still denotes by $(\rho_m)_{m\in\mathbb{N}_+}$), such that 
\begin{align*}
\rho_m\rightharpoonup\rho,	
\end{align*}
weakly in $L^2(\Omega;L^2([0,T];L^2(\mathbb{T}^d)))$, as $m\rightarrow\infty$. With the help of (\ref{H1-reg-N}), there exists a subsequence (still denotes by $(\rho_m)_{m\in\mathbb{N}_+}$) and $f\in L^2(\Omega;L^2([0,T];L^2(\mathbb{T}^d)))$ such that
\begin{align*}
\nabla\rho_{m}\rightharpoonup f,	
\end{align*}
weakly in $L^2(\Omega;L^2([0,T];L^2(\mathbb{T}^d)))$, as $m\rightarrow\infty$. As a consequence, for every $\varphi\in C^{\infty}(\mathbb{T}^d)$, $A\in\mathcal{F}$, the integration by parts formula implies that  
\begin{align*}
	&\mathbb{E}\Big(I_A\int_0^T\langle f(s),\varphi\rangle ds\Big)=\lim_{m\rightarrow+\infty}\mathbb{E}\Big(I_A\int_0^T\langle \nabla\rho_m(s),\varphi\rangle ds\Big)\\
	=&-\lim_{m\rightarrow+\infty}\mathbb{E}\Big(I_A\int_0^T\langle \rho_m,\nabla\varphi\rangle ds\Big)=-\mathbb{E}\Big(I_A\int_0^T\langle \rho(s),\nabla\varphi\rangle ds\Big).
\end{align*}
This shows that 
\begin{align*}
\nabla\rho_{m}\rightharpoonup\nabla\rho,	 
\end{align*}
weakly in $L^2(\Omega;L^2([0,T];L^2(\mathbb{T}^d)))$, as $m\rightarrow\infty$. Then the lower semi-continuity of the $L^2(\Omega;L^2([0,T];L^2(\mathbb{T}^d)))$-norm implies that  
\begin{align}\label{H1-reg}
\nabla\rho\in L^2(\Omega;L^2([0,T];L^2(\mathbb{T}^d))). 	
\end{align}

{\bf{Step 2. $L^{\infty}(\mathbb{T}^d)$-estimate of the variational solution.}} Let $\psi(\zeta)=(\zeta-K)^+$ and $\psi_n=\psi\ast\eta_n$, where
 $\eta_n$ is a standard convolution kernel for $n\geq1$.  For every $\alpha\in[1,\infty)$, applying It\^o's formula (see \cite[Theorem 3.1]{Kryito}) to $\psi_n(\rho)^{\alpha+1}$, combining with  (\ref{H1-reg}), by a similar procedure in \cite[Theorem 3.9]{DFG} and passing to the limit  $n\rightarrow\infty$, we obtain that
\begin{align}\label{moser-ito}
d\int_{\mathbb{T}^d}\psi^{\alpha+1}(\rho)dx=&-\int_{\mathbb{T}^d}|\nabla\psi^{\frac{\alpha+1}{2}}(\rho)|^2 dxdt+\varepsilon^{\frac{1}{2}}\int_{\mathbb{T}^d}(\alpha+1)\psi^{\alpha}(\rho)\nabla\cdot(G_0(\rho)dW^{\delta}(t))dx\notag\\
&+\frac{\varepsilon}{2}\int_{\mathbb{T}^d}\alpha(\alpha+1)\psi^{\alpha-1}(\rho)G_0^{(1)}(\rho)^2|\nabla\rho|^2F_1(\delta)dxdt\notag\\
&+\frac{\varepsilon}{2}\int_{\mathbb{T}^d}\alpha(\alpha+1)\psi^{\alpha-1}(\rho)G_0(\rho)^2F_3(\delta)dxdt,
\end{align}
where by using the definition of $\eta_{\delta}$, we have that 
\begin{align*}
&F_1(\delta)=\sum_{k\geq0,\theta=1,2}(\eta_{\delta}\ast e_{k,\theta})^2\lesssim\delta^{-d},\quad F_3(\delta)=\sum_{k\geq0,\theta=1,2}|\nabla \eta_{\delta}\ast e_{k,\theta}|^2\lesssim\delta^{-d-2}.
\end{align*}
For the martingale term, the chain rule implies that  
\begin{align*}
	&\varepsilon^{\frac{1}{2}}\int_0^{t}\int_{\mathbb{T}^d}(\alpha+1)\psi^{\alpha}(\rho)\nabla\cdot(G_0(\rho)dW^{\delta}(t))dx=I_1+I_2,
\end{align*}
where
\begin{align*}
	I_1=&\varepsilon^{\frac{1}{2}}\int_0^{t}\int_{\mathbb{T}^d}(\alpha+1)\psi^{\alpha}(\rho)G_0^{(1)}(\rho)\nabla\rho\cdot dW^{\delta}(t)dx,\\
	I_2=&\varepsilon^{\frac{1}{2}}\int_0^{t}\int_{\mathbb{T}^d}(\alpha+1)\psi^{\alpha}(\rho)G_0(\rho)\nabla\cdot dW^{\delta}(t)dx.
\end{align*}
We denote the conditional expectation by $\mathbb{E}_{\mathcal{F}_0}(\cdot)=\mathbb{E}(\cdot|\mathcal{F}_0)$. For any $\{\mathcal{F}(t)\}_{t\in[0,T]}$-stopping time $\tau$ with $\tau\in[0,T]$, $\mathbb{P}$-almost surely, applying Burkholder-Davis-Gundy's inequality to $I_1$, by H\"older's inequality and the boundedness of $G_0^{(1)}$, 
\begin{align*}
&\mathbb{E}_{\mathcal{F}_0}\Big[\sup_{t\in[0,\tau]}I_1\Big]\\
=&\mathbb{E}_{\mathcal{F}_0}\Big[\sup_{t\in[0,\tau]}\Big|\varepsilon^{\frac{1}{2}}\int_0^{t}\int_{\mathbb{T}^d}(\alpha+1)\psi^{\alpha}(\rho)G_0^{(1)}(\rho)\nabla\rho\cdot dW^{\delta}(s)dx\Big|\Big]\\
=&\mathbb{E}_{\mathcal{F}_0}\Big[\sup_{t\in[0,\tau]}\Big|2\varepsilon^{\frac{1}{2}}\int_0^t\int_{\mathbb{T}^d} G_0^{(1)}(\rho)\psi^{\frac{\alpha+1}{2}}(\rho)\nabla\psi^{\frac{\alpha+1}{2}}(\rho)\cdot dW^{\delta}(s)dx\Big|\Big]\\
\leq&2\|G_0^{(1)}\|_{L^{\infty}(\mathbb{R})}^{\frac{1}{2}}\varepsilon^{\frac{1}{2}}F_1(\delta)^{\frac{1}{2}}\mathbb{E}_{\mathcal{F}_0}\Big[\Big(\int_0^{\tau}\Big(\int_{\mathbb{T}^d}\psi^{\alpha+1}(\rho)dx\Big)\Big(\int_{\mathbb{T}^d}|\nabla\psi^{\frac{\alpha+1}{2}}(\rho)|^2dx\Big)ds\Big)^{\frac{1}{2}}\Big]\\
\leq&2\|G_0^{(1)}\|_{L^{\infty}(\mathbb{R})}^{\frac{1}{2}}\varepsilon^{\frac{1}{2}}F_1(\delta)^{\frac{1}{2}}\mathbb{E}_{\mathcal{F}_0}\Big[\Big(\sup_{t\in[0,\tau]}\int_{\mathbb{T}^d}\psi^{\alpha+1}(\rho)dx\Big)^{\frac{1}{2}}\Big(\int_0^{\tau}\int_{\mathbb{T}^d}|\nabla\psi^{\frac{\alpha+1}{2}}(\rho)|^2dxds\Big)^{\frac{1}{2}}\Big].
\end{align*}
By Young's inequality, it follows that 
\begin{align*}
	&\mathbb{E}_{\mathcal{F}_0}\Big[\sup_{t\in[0,\tau]}\Big|\varepsilon^{\frac{1}{2}}\int_0^t\int_{\mathbb{T}^d}(\alpha+1)\psi^{\alpha}(\rho)G_0^{(1)}(\rho)\nabla\rho\cdot dW^{\delta}(s)dx\Big|\Big]\\
	\leq&\frac{1}{2}\mathbb{E}_{\mathcal{F}_0}\Big[\sup_{t\in[0,\tau]}\int_{\mathbb{T}^d}\psi^{\alpha+1}(\rho)dx\Big]+\varepsilon\|G_0^{(1)}\|_{L^{\infty}(\mathbb{R})}F_1(\delta)\mathbb{E}_{\mathcal{F}_0}\Big[\int_0^{\tau}\int_{\mathbb{T}^d}|\nabla\psi^{\frac{\alpha+1}{2}}(\rho)|^2dxds\Big]. 
\end{align*}
Similarly, applying Burkholder-Davis-Gundy's inequality to $I_2$, by H\"older's inequality and the boundedness of $G_0^{(1)}$, 
\begin{align*}
\mathbb{E}_{\mathcal{F}_0}\Big[\sup_{t\in[0,\tau]}|I_2|\Big]\leq&\frac{1}{4}\mathbb{E}_{\mathcal{F}_0}\Big[\sup_{t\in[0,\tau]}\int_{\mathbb{T}^d}\psi^{\alpha+1}(\rho)dx\Big]+\varepsilon\|G_0\|_{L^{\infty}(\mathbb{R})}F_3(\delta)\mathbb{E}_{\mathcal{F}_0}\Big[\int_0^{\tau}\int_{\mathbb{T}^d}\psi^{\alpha-1}(\rho)dxdt\Big].
\end{align*}

Combining with (\ref{moser-ito}), by the boundedness of $G_0$ and $G^{(1)}_0(\cdot)$, choosing $\varepsilon_0=\|G^{(1)}_0\|_{L^{\infty}(\mathbb{R})}^{-1}\delta^{d}$, then there exists a constant $c=c(G_0)$ independent of $\varepsilon$ such that for every $\varepsilon\in(0,\varepsilon_0)$,
\begin{align*}
\mathbb{E}_{\mathcal{F}_0}\Big[\sup_{t\in[0,\tau]}\int_{\mathbb{T}^d}\psi^{\alpha+1}(\rho)dx+\int^{\tau}_0\int_{\mathbb{T}^d}|\nabla\psi^{\frac{\alpha+1}{2}}(\rho)|^2dxdt\Big]\leq c\alpha^2\varepsilon\delta^{-d-2}\mathbb{E}_{\mathcal{F}_0}\int_0^{\tau}\int_{\mathbb{T}^d}\psi^{\alpha-1}(\rho)dxdt.
\end{align*}

For every $\alpha\in[1,+\infty)$, set $n_{\alpha}=(\alpha+1)^{-1}$. By \cite[Chapter 4, Proposition 4.7, Exercise 4.3]{RY99}, there exists a constant $c=c(G_0)$ such that  
\begin{align}\label{moser-1}
&\mathbb{E}\Big[\Big(\sup_{t\in[0,T]}\int_{\mathbb{T}^d}\psi^{\alpha+1}(\rho)dx+\int^T_0\int_{\mathbb{T}^d}|\nabla\psi^{\frac{\alpha+1}{2}}(\rho)|^2dxdt\Big)^{\frac{1}{\alpha+1}}\Big]\notag\\
\leq&\frac{n_{\alpha}^{-n_{\alpha}}}{1-n_{\alpha}}(c\alpha^2\varepsilon \delta^{-d-2})^{n_{\alpha}}\mathbb{E}\Big[\|\psi(\rho)\|_{L^{\alpha-1}(\mathbb{T}^d\times[0,T])}\Big]^{\frac{\alpha-1}{\alpha+1}}.
\end{align}
For $d>2$, set $e\in(0,2)$,  
\begin{align}\label{thetaq}
\theta=\frac{d}{2+d},\ \ q=\frac{(e+d)(\alpha+1)}{d}.	
\end{align}
By H\"older's inequality and the $L^p$-interpolation inequality, we find that 
\begin{align}\label{Lq-moser}
\|\psi(\rho)\|_{L^q(\mathbb{T}^d\times[0,T])}=&\Big(\int_0^T\int_{\mathbb{T}^d}\psi(\rho)^{\frac{(e+d)(\alpha+1)}{d}}dx dt\Big)^{\frac{d}{(d+e)(\alpha+1)}}\notag\\
\leq&\Big(\int_0^T\|\psi(\rho)\|_{L^{\frac{(e+d)(\alpha+1)}{d}}(\mathbb{T}^d)}^{\frac{(e+d)(\alpha+1)}{d}}dt\Big)^{\frac{d}{(d+e)(\alpha+1)}}\notag\\
\leq&\Big(\int_0^T\|\psi(\rho)\|_{L^{\frac{(2+d)(\alpha+1)}{d}}(\mathbb{T}^d)}^{\frac{(2+d)(\alpha+1)}{d}}dt\Big)^{\frac{d}{(d+2)(\alpha+1)}}\Big(\int_0^T1dt\Big)^{\frac{d(2-e)}{(d+e)(\alpha+1)(2+d)}}\notag\\
\leq&\Big(T^{\frac{d(2-e)}{(d+e)(2+d)}}\Big)^{\frac{1}{\alpha+1}}\Big(\int_0^T\|\psi(\rho)\|_{L^{\alpha+1}(\mathbb{T}^d)}^{\frac{(2+d)(\alpha+1)}{d}\theta'}\|\psi(\rho)\|_{L^{\frac{d(\alpha+1)}{d-2}}(\mathbb{T}^d)}^{\frac{(2+d)(\alpha+1)(1-\theta')}{d}}dt\Big)^{\frac{d}{(d+2)(\alpha+1)}},	
\end{align}

where $\theta'$ satisfies that 
\begin{align*}
\frac{d}{(2+d)(\alpha+1)}=\frac{\theta'}{\alpha+1}+\frac{1-\theta'}{(\frac{d}{d-2})(\alpha+1)}. 
\end{align*}
By a direct calculation, $\theta'=\frac{2}{2+d}$, $1-\theta'=\frac{d}{2+d}$, and therefore 
\begin{align*}
	\|\psi(\rho)\|_{L^q(\mathbb{T}^d\times[0,T])}\leq\Big(T^{\frac{d(2-e)}{(d+e)(2+d)}}\Big)^{\frac{1}{\alpha+1}}\|\psi(\rho)\|_{L^{\infty}([0,T];L^{\alpha+1}(\mathbb{T}^d))}^{1-\theta}\|\psi(\rho)\|_{L^{\alpha+1}([0,T];L^{\frac{d(\alpha+1)}{d-2}}(\mathbb{T}^d))}^{\theta}.
\end{align*}
Combining with the Sobolev's embedding theory, let $c=\sqrt{T}+1$, it follows that 
\begin{align*}
	\|\psi(\rho)\|_{L^q(\mathbb{T}^d\times[0,T])}\leq&\Big(T^{\frac{d(2-e)}{(d+e)(2+d)}}\Big)^{\frac{1}{\alpha+1}}\|\psi(\rho)\|_{L^{\infty}([0,T];L^{\alpha+1}(\mathbb{T}^d))}^{1-\theta}\|\psi(\rho)^{\frac{\alpha+1}{2}}\|_{L^2([0,T];L^{\frac{2d}{d-2}}(\mathbb{T}^d))}^{\frac{2\theta}{\alpha+1}}\\
	\leq&\Big(T^{\frac{d(2-e)}{(d+e)(2+d)}}\Big)^{\frac{1}{\alpha+1}}\|\psi(\rho)\|_{L^{\infty}([0,T];L^{\alpha+1}(\mathbb{T}^d))}^{1-\theta}\\
	&\cdot\Big(c\Big(\|\psi(\rho)\|_{L^{\infty}([0,T];L^{\alpha+1}(\mathbb{T}^d))}^{\frac{\alpha+1}{2}}+\|\nabla\psi^{\frac{\alpha+1}{2}}(\rho)\|_{L^2([0,T];L^2(\mathbb{T}^d))}\Big)\Big)^{\frac{2\theta}{\alpha+1}}. 
\end{align*}
By H\"older's inequality, the inequality $(x+y)^2\leq2x^2+2y^2$, for $x,y\geq0$, and by $\theta\in(0,1)$, it follows from (\ref{moser-1}) that 
\begin{align*}
	\mathbb{E}\|\psi(\rho)\|_{L^{\frac{(e+d)(\alpha+1)}{d}}(\mathbb{T}^d\times[0,T])}\lesssim\Big(\frac{n_{\alpha}^{-n_{\alpha}}}{1-n_{\alpha}}\Big)\Big(c\alpha^2\varepsilon \delta^{-d-2}T^{\frac{d(2-e)}{(d+e)(2+d)}}\Big)^{n_{\alpha}}\mathbb{E}\Big[\|\psi(\rho)\|_{L^{\alpha-1}(\mathbb{T}^d\times[0,T])}\Big]^{\frac{\alpha-1}{\alpha+1}}. 
\end{align*}
Now we are ready to employ a standard Moser iteration argument. Set 
\begin{align*}
\alpha_0=0,\ \ \alpha_k=\frac{e+d}{d}(\alpha_{k-1}+2),\ \ \beta_k=\alpha_{k-1}+1,\ \text{for }k\in\mathbb{N}/\{0\},  	
\end{align*}
a recursive computation implies that for $k\in\mathbb{N}/\{0\}$, there exists a constant $c>0$ such that 
\begin{align*}
&\mathbb{E}\|\psi(\rho)\|_{L^{\alpha_k}(\mathbb{T}^d\times[0,T])}\\
\leq&\prod_{r=1}^k\Big(\frac{n_{\beta_r}^{-n_{\beta_r}}}{1-n_{\beta_r}}(c\beta_r)^{2n_{\beta_r}}\Big)^{\prod_{s=r+1}^k\frac{\beta_s-1}{\beta_s+1}}\Big(\varepsilon \delta^{-d-2}T^{\frac{d(2-e)}{(d+e)(2+d)}}\Big)^{\sum_{r=1}^kn_{\beta_r}\Pi^k_{s=r+1}\frac{\beta_s-1}{\beta_s+1}}.
\end{align*}
For more details of the computation, and an analysis of the convergence of series, see the proof of \cite[Theorem 3.9]{DFG}. By using the same argument therein, taking $k\rightarrow\infty$, we obtain that there exists a constant $C=C(T)$ depending on $T$ with $1\lesssim\lim_{T\rightarrow0}C(T)<\infty$, and a constant $\gamma'>0$, such that 
\begin{align*}
\mathbb{E}\|\psi(\rho)\|_{L^{\infty}(\mathbb{T}^d\times[0,T])}\leq C	\Big(\varepsilon \delta^{-d-2}T^{\frac{d(2-e)}{(d+e)(2+d)}}\Big)^{\gamma'}.
\end{align*}
For the cases of $d=1$ and $d=2$, we choose $b\in(0,d)$, let $\theta=\frac{d}{b+d}$ and set $q=\frac{(e+d)(\alpha+1)}{d}$ with some $e\in(0,b)$ in (\ref{thetaq}). Returning to (\ref{Lq-moser}), we have
\begin{align}\label{Lq-moser=2}
\|\psi(\rho)\|_{L^q(\mathbb{T}^d\times[0,T])}\leq&\Big(\int_0^T\|\psi(\rho)\|_{L^{\frac{(b+d)(\alpha+1)}{d}}(\mathbb{T}^d)}^{\frac{(b+d)(\alpha+1)}{d}}dt\Big)^{\frac{d}{(d+b)(\alpha+1)}}\Big(\int_0^T1dt\Big)^{\frac{d(b-e)}{(d+e)(\alpha+1)(b+d)}}\notag\\
\leq&\Big(T^{\frac{d(b-e)}{(d+e)(b+d)}}\Big)^{\frac{1}{\alpha+1}}\Big(\int_0^T\|\psi(\rho)\|_{L^{\alpha+1}(\mathbb{T}^d)}^{\frac{(b+d)(\alpha+1)}{d}\theta'}\|\psi(\rho)\|_{L^{\frac{d(\alpha+1)}{d-b}}(\mathbb{T}^d)}^{\frac{(b+d)(\alpha+1)(1-\theta')}{d}}dt\Big)^{\frac{d}{(d+b)(\alpha+1)}},	
\end{align}
where $\theta'$ satisfies that 
\begin{align*}
\frac{d}{(b+d)(\alpha+1)}=\frac{\theta'}{\alpha+1}+\frac{1-\theta'}{(\frac{d}{d-b})(\alpha+1)}. 
\end{align*}
By direct calculation, $\theta'=\frac{b}{b+d}$, $1-\theta'=\frac{d}{b+d}$ and therefore 
\begin{align*}
	\|\psi(\rho)\|_{L^q(\mathbb{T}^d\times[0,T])}\leq\Big(T^{\frac{d(b-e)}{(d+e)(2+d)}}\Big)^{\frac{1}{\alpha+1}}\|\psi(\rho)\|_{L^{\infty}([0,T];L^{\alpha+1}(\mathbb{T}^d))}^{1-\theta}\|\psi(\rho)\|_{L^{\alpha+1}([0,T];L^{\frac{d(\alpha+1)}{d-b}}(\mathbb{T}^d))}^{\theta}.
\end{align*}
Combining with Sobolev's embedding, with $c=\sqrt{T}+1$, it follows that 
\begin{align*}
	\|\psi(\rho)\|_{L^q(\mathbb{T}^d\times[0,T])}\leq&\Big(T^{\frac{d(b-e)}{(d+e)(b+d)}}\Big)^{\frac{1}{\alpha+1}}\|\psi(\rho)\|_{L^{\infty}([0,T];L^{\alpha+1}(\mathbb{T}^d))}^{1-\theta}\|\psi(\rho)^{\frac{\alpha+1}{2}}\|_{L^2([0,T];L^{\frac{2d}{d-b}}(\mathbb{T}^d))}^{\frac{2\theta}{\alpha+1}}\\
	\leq&\Big(T^{\frac{d(b-e)}{(d+e)(b+d)}}\Big)^{\frac{1}{\alpha+1}}\|\psi(\rho)\|_{L^{\infty}([0,T];L^{\alpha+1}(\mathbb{T}^d))}^{1-\theta}\\
	&\cdot\Big(c\Big(\|\psi(\rho)\|_{L^{\infty}([0,T];L^{\alpha+1}(\mathbb{T}^d))}^{\frac{\alpha+1}{2}}+\|\nabla\psi^{\frac{\alpha+1}{2}}(\rho)\|_{L^2([0,T];L^2(\mathbb{T}^d))}\Big)\Big)^{\frac{2\theta}{\alpha+1}}. 
\end{align*}
Then the same approach as for the Moser iteration argument can be applied.

Analogous arguments can be employed for the estimates of $(\rho-K')^{-}$ and for the case of non-conservative noise, which completes the proof.

\end{proof}

We note that, as long as $\rho^{\varepsilon,\delta}$ takes values in the interval $\Big[{\rm{ess\inf}}u_0-\gamma,{\rm{ess\sup}}u_0+\gamma\Big]$, by the definition of $G_0$, $\rho^{\varepsilon,\delta}$ is a local in time solution to (\ref{SHE01}). We can therefore develop a local-in-time well-posedness theory for (\ref{SHE01}) by deriving estimates on the stopping time
\begin{equation}\label{randomtime}
\tau^{\varepsilon,\delta}_{\gamma}:=\inf\Big\{t\in[0,T];{\rm{ess\sup}}_{x\in\mathbb{T}^d}\rho^{\varepsilon,\delta}(t,x)>K+\gamma,\text{or }{\rm{ess\inf}}_{x\in\mathbb{T}^d}\rho^{\varepsilon,\delta}(t,x)<K'-\gamma\Big\},
\end{equation}
where $K:={\rm{ess}}\sup u_0$, $K':={\rm{ess}}\inf u_0$.

With the help of the $L^{\infty}$-estimate (\ref{Linfinity}), we will next show that $\tau^{\varepsilon,\delta}_{\gamma}$ is an $\{\mathcal{F}(t)\}_{t\in[0,T]}$-stopping time and it is $\mathbb{P}$-almost surely positive. Additionally, the following Lemma provides estimates on the asymptotic behavior of the stopping time in the small noise regime.
\begin{lemma}\label{stopproperty}
Under the same hypotheses as Lemma \ref{variationwellposed-2}. Let $\varepsilon,\delta>0$. In the conservative case, we further assume that $\varepsilon<\varepsilon_0$, where $\varepsilon_0$ is the constant that appears in Lemma \ref{variationwellposed-2}. Let $\rho^{\varepsilon,\delta}$ be the $H^{-1}(\mathbb{T}^d)$-variational solution (resp. mild solution) for  (\ref{approxeq}) (resp. (\ref{approxeq-2})). Let $\tau^{\varepsilon,\delta}_{\gamma}$ be defined by (\ref{randomtime}). Then $\tau^{\varepsilon,\delta}_{\gamma}$ is a $\mathbb{P}$-almost surely positive stopping time with respect to $\{\mathcal{F}(t)\}_{t\in[0,T]}$. Moreover, if
\begin{align}\label{localscale-1}
	\lim_{\varepsilon\rightarrow0}\Big(\varepsilon \delta(\varepsilon)^{-d}I_{\{i=1\}}+\varepsilon \delta(\varepsilon)^{-d-2}I_{\{i=2\}}\Big)=0,
\end{align}
then 
\begin{equation}\label{asymtrandom}
\lim_{\varepsilon\rightarrow0}\mathbb{P}(\tau^{\varepsilon,\delta(\varepsilon)}_{\gamma}>t)=1.
\end{equation}

\end{lemma}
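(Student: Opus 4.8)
The plan is to derive \eqref{asymtrandom} as a direct consequence of the $L^\infty$-estimate in Lemma~\ref{lem-moser-1}, after first checking that $\tau^{\varepsilon,\delta}_\gamma$ is genuinely a stopping time. For the measurability claim, I would argue that the map $t\mapsto \mathrm{ess\,sup}_{x}\rho^{\varepsilon,\delta}(t,x)$ is lower semicontinuous (and $t\mapsto\mathrm{ess\,inf}_x\rho^{\varepsilon,\delta}(t,x)$ upper semicontinuous) on $[0,T]$: this follows from the continuity of $\rho^{\varepsilon,\delta}$ in $H^{-1}(\mathbb{T}^d)$ (resp.\ $L^2(\mathbb{T}^d)$) together with the $H^1$-regularity $\nabla\rho^{\varepsilon,\delta}\in L^2(\Omega;L^2([0,T];L^2))$ established in Step~1 of the proof of Lemma~\ref{lem-moser-1}, which upgrades weak convergence along a sequence of times to convergence strong enough in a space embedding into $C(\mathbb{T}^d)$ after the Moser bound; concretely, once we know $\rho^{\varepsilon,\delta}\in L^\infty([0,\tau]\times\mathbb{T}^d)$ the right-continuity of $\rho^{\varepsilon,\delta}(\cdot)$ in $L^2$ combined with a dominated-convergence argument gives that the hitting time of the open set $\{\mathrm{ess\,sup}>K+\gamma\}\cup\{\mathrm{ess\,inf}<K'-\gamma\}$ is an $\{\mathcal F(t)\}$-stopping time by the début theorem (the filtration being right-continuous and complete).

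The core estimate is a Chebyshev/Markov argument. By definition of $\tau^{\varepsilon,\delta}_\gamma$ in \eqref{randomtime}, on the event $\{\tau^{\varepsilon,\delta}_\gamma\le t\}$ we have either $\mathrm{ess\,sup}_{x\in\mathbb{T}^d}\rho^{\varepsilon,\delta}(\tau^{\varepsilon,\delta}_\gamma,x)\ge K+\gamma$ or $\mathrm{ess\,inf}_{x\in\mathbb{T}^d}\rho^{\varepsilon,\delta}(\tau^{\varepsilon,\delta}_\gamma,x)\le K'-\gamma$, hence in either case
\[
\big\|(\rho^{\varepsilon,\delta}-K)^+\big\|_{L^\infty(\mathbb{T}^d\times[0,T])}+\big\|(\rho^{\varepsilon,\delta}-K')^-\big\|_{L^\infty(\mathbb{T}^d\times[0,T])}\ge \gamma.
\]
Therefore
\[
\mathbb{P}(\tau^{\varepsilon,\delta}_\gamma\le t)\le \frac{1}{\gamma}\,\mathbb{E}\Big(\big\|(\rho^{\varepsilon,\delta}-K)^+\big\|_{L^\infty(\mathbb{T}^d\times[0,T])}+\big\|(\rho^{\varepsilon,\delta}-K')^-\big\|_{L^\infty(\mathbb{T}^d\times[0,T])}\Big),
\]
and Lemma~\ref{lem-moser-1} bounds the right-hand side by $\gamma^{-1}C(T)\,(\varepsilon\delta^{-d}T^{\tilde\gamma})^{\gamma'}$ in the non-conservative case, resp.\ $\gamma^{-1}C(T)\,(\varepsilon\delta^{-d-2}T^{\tilde\gamma})^{\gamma'}$ in the conservative case. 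Under hypothesis \eqref{localscale-1}, $\varepsilon\delta(\varepsilon)^{-d}\to0$ (resp.\ $\varepsilon\delta(\varepsilon)^{-d-2}\to0$), so the right-hand side tends to $0$ as $\varepsilon\to0$, which gives $\mathbb{P}(\tau^{\varepsilon,\delta(\varepsilon)}_\gamma\le t)\to0$, i.e.\ \eqref{asymtrandom}. Positivity of $\tau^{\varepsilon,\delta}_\gamma$ for fixed $\varepsilon,\delta$ follows by the same bound: for $t$ small enough the constant $C(t)\to C(0)<\infty$ and the $T^{\tilde\gamma\gamma'}$ factor is $<\gamma^{\gamma'^{-1}}\cdots$ — more simply, letting $t\downarrow0$ forces $\mathbb{P}(\tau^{\varepsilon,\delta}_\gamma\le t)\to0$, and since $\{\tau^{\varepsilon,\delta}_\gamma=0\}\subset\bigcap_{t>0}\{\tau^{\varepsilon,\delta}_\gamma\le t\}$ we conclude $\mathbb{P}(\tau^{\varepsilon,\delta}_\gamma>0)=1$.

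The main obstacle is not the probabilistic estimate, which is immediate from Lemma~\ref{lem-moser-1}, but the measurability statement: one must ensure the extended process $\tilde u^{\varepsilon,\delta}$ in Definition~\ref{variation-local} is adapted and that the $\mathrm{ess\,sup}$/$\mathrm{ess\,inf}$ in $x$ of a merely-$L^2$-continuous trajectory depends measurably (indeed, upper/lower semicontinuously) on $t$, so that the début theorem applies. This is handled by exploiting the additional spatial regularity — the bound $\rho^{\varepsilon,\delta}\in L^\infty$ together with $\nabla\rho^{\varepsilon,\delta}\in L^2([0,T];L^2)$ gives, after interpolation with the $H^{-1}$-continuity in time, continuity of $t\mapsto\rho^{\varepsilon,\delta}(t)$ into a space compactly embedded in $C(\mathbb{T}^d)$ in dimension $d=1$, and in higher dimensions one works with the $L^p([0,T]\times\mathbb{T}^d)$-formulation and takes the hitting time of the essential supremum computed over rational times, using right-continuity to pass to all $t$. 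I would present the probabilistic bound in full and treat the measurability by citing the début theorem for the right-continuous completed filtration, noting the semicontinuity of the relevant functionals of $\rho^{\varepsilon,\delta}$.
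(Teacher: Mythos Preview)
Your probabilistic core --- Chebyshev against the $L^\infty$-estimate of Lemma~\ref{lem-moser-1}, both for \eqref{asymtrandom} and for the a.s.\ positivity via $t\downarrow 0$ --- matches the paper exactly.

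The difference is in the stopping-time argument, where your proposal is more complicated than necessary and not fully justified. You try to establish semicontinuity of $t\mapsto\mathrm{ess\,sup}_x\rho^{\varepsilon,\delta}(t,x)$ by combining $H^{-1}$-continuity, the $L^2_t H^1_x$ bound, and the Moser $L^\infty$ bound; but $L^2_t H^1_x$ gives no pointwise-in-$t$ spatial regularity, the Moser bound yields $L^\infty$ rather than $C(\mathbb{T}^d)$, and your dimension-dependent fallback to rational times is left vague. The paper avoids all of this. It first rewrites \eqref{randomtime} as the hitting time of the complement of
\[
A_\gamma=\Big\{\rho\in L^\infty(\mathbb{T}^d):\ \Big\|\rho-\tfrac{K+K'}{2}\Big\|_{L^\infty(\mathbb{T}^d)}\le\gamma+\tfrac{K-K'}{2}\Big\},
\]
and then shows directly that $A_\gamma$ is \emph{closed in $H^{-1}(\mathbb{T}^d)$}: if $\rho_n\to\rho$ in $H^{-1}$ with $\rho_n\in A_\gamma$, the uniform $L^\infty$ bound gives a weak*-convergent subsequence in $L^\infty$, whose limit must agree with $\rho$ by testing against $C^\infty(\mathbb{T}^d)$, and the $L^\infty$ norm is weak*-lower semicontinuous. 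Since $\rho^{\varepsilon,\delta}\in C([0,T];H^{-1}(\mathbb{T}^d))$ a.s., the hitting time of the $H^{-1}$-open set $A_\gamma^c$ is then an $\{\mathcal F(t)\}$-stopping time by the standard argument for continuous processes and right-continuous filtrations. This uses only the $H^{-1}$-continuity already available from Lemma~\ref{variationwellposed-2} and requires no additional spatial regularity.
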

\begin{proof}
{\bf{Step 1. $\tau^{\varepsilon,\delta}_{\gamma}$ is a stopping time.}}  We first show that  
\begin{align}\label{equirepresen}
\tau^{\varepsilon,\delta}_{\gamma}=\inf\Big\{t\in[0,T];\|\rho^{\varepsilon,\delta}(t)-\frac{K+K'}{2}\|_{L^{\infty}(\mathbb{T}^d)}>\gamma+\frac{K-K'}{2}\Big\}\wedge T.
\end{align}
Let $m_L$ be the Lebesgue measure on $\mathbb{T}^d$. By the definition of $L^{\infty}(\mathbb{T}^d)$-norm, we find that for any $f\in L^{\infty}(\mathbb{T}^d)$, 
\begin{align*}
&\|f-\frac{K+K'}{2}\|_{L^{\infty}(\mathbb{T}^d)}>\gamma+\frac{K-K'}{2},\\
\iff&m_L\Big(|f-\frac{K+K'}{2}|>\gamma+\frac{K-K'}{2}\Big)>0,\\
\iff&m_L\Big(\Big\{f-\frac{K+K'}{2}>\gamma+\frac{K-K'}{2},f>\frac{K+K'}{2}\Big\}\\
&\cup\Big\{\frac{K+K'}{2}-f>\gamma+\frac{K-K'}{2},f\leq\frac{K+K'}{2}\Big\}\Big)>0,\\
\iff&m_L(f>K+\gamma)>0,\ \text{or }m_L(f<K'-\gamma)>0,\\
\iff&{\rm{ess\sup}}_{x\in\mathbb{T}^d}f(x)>K+\gamma,\ \text{or }{\rm{ess\inf}}_{x\in\mathbb{T}^d}f(x)<K'-\gamma. 	
\end{align*}
This shows that (\ref{equirepresen}) holds.

Set 
\begin{align*}
A_{\gamma}=\{\rho\in L^{\infty}(\mathbb{T}^d):\|\rho-\frac{K+K'}{2}\|_{L^{\infty}(\mathbb{T}^d)}\leq\gamma+\frac{K-K'}{2}\}. 	
\end{align*}
We claim that $A_{\gamma}$ is a closed set in $H^{-1}(\mathbb{T}^d)$. Indeed, let $(\rho_n)_{n\geq1}\subset A_{\gamma}$ be a sequence such that $\rho_n\rightarrow\rho$ in $H^{-1}(\mathbb{T}^d)$ as $n\rightarrow\infty$. By the definition of $A_{\gamma}$, we have that 
\begin{align*}
\|\rho_n\|_{L^{\infty}(\mathbb{T}^d)}\leq K+\gamma. 
\end{align*}
As a consequence, there exists a subsequence $(\rho_{n_k})_{k\geq1}$, and $\tilde{\rho}\in L^{\infty}(\mathbb{T}^d)$ such that 
$\rho_{n_k}\rightarrow\tilde{\rho}$ weakly* in $L^{\infty}(\mathbb{T}^d)$, as $k\rightarrow\infty$. Thus for every $\psi\in C^{\infty}(\mathbb{T}^d)$, 
\begin{align*}
\lim_{k\rightarrow\infty}\langle\rho_{n_k},\psi\rangle=\langle\tilde{\rho},\psi\rangle=\langle\rho,\psi\rangle.  	
\end{align*}
Moreover, the fact $\tilde{\rho}\in L^{\infty}(\mathbb{T}^d)$ implies that $\rho,\tilde{\rho}$ can be extended as continuous functions on $L^2(\mathbb{T}^d)$, then the uniqueness in Riesz's representation theorem implies that $\rho=\tilde{\rho}$ almost everywhere. 
It follows that $\rho_{n_k}\rightarrow\rho$ weakly* in $L^{\infty}(\mathbb{T}^d)$, as $k\rightarrow\infty$. By the lower semi-continuity of the $L^{\infty}(\mathbb{T}^d)$-norm with respect to the weak* topology, we have that
\begin{align*}
\|\rho-\frac{K+K'}{2}\|_{L^{\infty}(\mathbb{T}^d)}\leq\liminf_{k\rightarrow\infty}\|\rho_{n_k}-\frac{K+K'}{2}\|_{L^{\infty}(\mathbb{T}^d)}\leq\gamma+\frac{K-K'}{2}. 	
\end{align*}

As a consequence, we find that $\rho\in A_{\gamma}$, which implies that $A_{\gamma}$ is closed in $H^{-1}(\mathbb{T}^d)$. 

It follows from Lemma \ref{variationwellposed} that $\rho^{\varepsilon,\delta}\in C([0,T];H^{-1}(\mathbb{T}^d))$, $\mathbb{P}$-almost surely. Due to the fact that $\{\mathcal{F}(t)\}_{t\in[0,T]}$ is right continuous, by the classical theory of stochastic analysis and stopping time, as seen, for example, in \cite[Proposition 2.3, Problem 2.6]{KS0}, $\tau^{\varepsilon,\delta}_{\gamma}$ is an $\{\mathcal{F}(t)\}_{t\in[0,T]}$-stopping time.  

{\bf{Step 2. The proof of (\ref{asymtrandom}).}} By the definition of $\tau^{\varepsilon,\delta}_{\gamma}$, for every $t\in(0,T)$, it follows that 
\begin{align*}
	\mathbb{P}(\tau^{\varepsilon,\delta}_{\gamma}> t)\geq&\mathbb{P}\Big(\sup_{s\in[0,t]}\|\rho^{\varepsilon,\delta}(s)-\frac{K+K'}{2}\|_{L^{\infty}(\mathbb{T}^d)}\leq\gamma+\frac{K-K'}{2}\Big).
\end{align*}
We claim that 
\begin{align}\label{esssup}
	&\Big\{\sup_{s\in[0,t]}\|\rho^{\varepsilon,\delta}(s)-\frac{K+K'}{2}\|_{L^{\infty}(\mathbb{T}^d)}\leq\gamma+\frac{K-K'}{2}\Big\}\notag\\
	=&\Big\{\|\rho^{\varepsilon,\delta}-\frac{K+K'}{2}\|_{L^{\infty}(\mathbb{T}^d\times[0,t])}\leq\gamma+\frac{K-K'}{2}\Big\}.
\end{align}
This can be proved by the fact that almost surely $\rho^{\varepsilon,\delta}\in C([0,T];H^{-1}(\mathbb{T}^d))$ and the lower semi-continuity of the $L^{\infty}(\mathbb{T}^d)$-norm. Consequently, we find that 
\begin{align*}
	\mathbb{P}(\tau^{\varepsilon,\delta}_{\gamma}{\color{blue}>}t)\geq&\mathbb{P}\Big(\|\rho^{\varepsilon,\delta}-\frac{K+K'}{2}\|_{L^{\infty}(\mathbb{T}^d\times[0,t])}\leq\gamma+\frac{K-K'}{2}\Big)\\
	=&1-\mathbb{P}\Big(\|\rho^{\varepsilon,\delta}-\frac{K+K'}{2}\|_{L^{\infty}(\mathbb{T}^d\times[0,t])}>\gamma+\frac{K-K'}{2}\Big).
\end{align*}
Using the definition of the $L^{\infty}(\mathbb{T}^d)$-norm,  
\begin{align*}
&\Big\{\|\rho^{\varepsilon,\delta}-\frac{K+K'}{2}\|_{L^{\infty}(\mathbb{T}^d\times[0,t])}>\gamma+\frac{K-K'}{2}\Big\}\\
\subset&\Big\{\|(\rho^{\varepsilon,\delta}-K)^{+}\|_{L^{\infty}(\mathbb{T}^d\times[0,t])}>\gamma\Big\}\cup\Big\{\|(\rho^{\varepsilon,\delta}-K')^{-}\|_{L^{\infty}(\mathbb{T}^d\times[0,t])}>\gamma\Big\}\\
\subset&\Big\{\|(\rho^{\varepsilon,\delta}-K)^{+}\|_{L^{\infty}(\mathbb{T}^d\times[0,t])}+\|(\rho^{\varepsilon,\delta}-K')^{-}\|_{L^{\infty}(\mathbb{T}^d\times[0,t])}>\gamma\Big\}. 
\end{align*}
In combination of Chebyshev's inequality, we get
\begin{align*}
\mathbb{P}(\tau^{\varepsilon,\delta}_{\gamma}{\color{blue}>}t)\geq&1-\mathbb{P}\Big((\|(\rho^{\varepsilon,\delta}-K)^+\|_{L^{\infty}(\mathbb{T}^d\times[0,t])}+\|(\rho^{\varepsilon,\delta}-K')^-\|_{L^{\infty}(\mathbb{T}^d\times[0,t])})>\gamma\Big)\\
\geq&1-\frac{1}{\gamma}\mathbb{E}\Big(\|(\rho^{\varepsilon,\delta}-K)^+\|_{L^{\infty}(\mathbb{T}^d\times[0,t])}+\|(\rho^{\varepsilon,\delta}-K')^-\|_{L^{\infty}(\mathbb{T}^d\times[0,t])}\Big).
\end{align*}
With the aid of (\ref{Linfinity}) in Lemma \ref{lem-moser-1} in the scaling regime $(\varepsilon,\delta(\varepsilon))$ satisfying $\lim_{\varepsilon\rightarrow0}\varepsilon \delta(\varepsilon)^{-d-2}=0$, it follows that
\begin{align*}
\lim_{\varepsilon\rightarrow0}\mathbb{P}(\tau^{\varepsilon,\delta(\varepsilon)}_{\gamma}{\color{blue}>}t)=1.
\end{align*}

{\bf {Step 3. The stopping time is $\mathbb{P}$-almost surely positive.}} For every $M>1$, 
\begin{align*}
\mathbb{P}(\tau^{\varepsilon,\delta}_{\gamma}\leq\frac{1}{M})\leq&\mathbb{P}\Big(\sup_{s\in[0,\frac{1}{M}]}\|\rho^{\varepsilon,\delta}(s)-\frac{K+K'}{2}\|_{L^{\infty}(\mathbb{T}^d)}>\gamma+\frac{K-K'}{2}\Big)\\
=&\mathbb{P}\Big(\|\rho^{\varepsilon,\delta}-\frac{K+K'}{2}\|_{L^{\infty}(\mathbb{T}^d\times[0,\frac{1}{M}])}>\gamma+\frac{K-K'}{2}\Big).	
\end{align*}
By using the definition of the $L^{\infty}(\mathbb{T}^d)$-norm again, for every $s\in[0,\frac{1}{M}]$, 
\begin{align*}
&\Big\{\|\rho^{\varepsilon,\delta}-\frac{K+K'}{2}\|_{L^{\infty}([0,\frac{1}{M}]\times\mathbb{T}^d)}>\gamma+\frac{K-K'}{2}\Big\}\\
\subset&\Big\{\|(\rho^{\varepsilon,\delta}-K)^{+}\|_{L^{\infty}([0,\frac{1}{M}]\times\mathbb{T}^d)}+\|(\rho^{\varepsilon,\delta}-K')^{-}\|_{L^{\infty}([0,\frac{1}{M}]\times\mathbb{T}^d)}>\gamma\Big\}. 
\end{align*}
Combining with Lemma \ref{lem-moser-1} and Chebyshev's inequality, there exists $\varepsilon_0=\varepsilon_0(\delta,G_0^{(1)})>0$, $\tilde{\gamma}=\tilde{\gamma}(d)>0$, $C=C(T)>0$ with $1\lesssim\lim_{T\rightarrow0}C(T)<\infty$, and $\gamma'>0$ independent on $\varepsilon,\delta,T,G_0$, such that 
\begin{align*}
	\mathbb{P}(\tau^{\varepsilon,\delta}_{\gamma}=0)=&\lim_{M\rightarrow\infty}\mathbb{P}(\tau^{\varepsilon,\delta}_{\gamma}\leq\frac{1}{M})\\
	\leq&\lim_{M\rightarrow\infty}\frac{1}{\gamma}\Big(\mathbb{E}\|(\rho^{\varepsilon,\delta}-K)^{+}\|_{L^{\infty}([0,\frac{1}{M}]\times\mathbb{T}^d)}+\mathbb{E}\|(\rho^{\varepsilon,\delta}-K')^{-}\|_{L^{\infty}([0,\frac{1}{M}]\times\mathbb{T}^d)}\Big)\\
	\leq&\frac{1}{\gamma}\lim_{M\rightarrow\infty}C(\varepsilon \delta^{-d-2}(\frac{1}{M})^{\tilde{\gamma}})^{\gamma'}=0.
\end{align*}
This shows that $\tau^{\varepsilon,\delta}_{\gamma}$ is $\mathbb{P}$-almost surely positive. The same argument can be carried out for the case of non-conservative noise, which completes the proof.

\end{proof}

The above Lemma \ref{stopproperty} implies that the local survival stopping times for  (\ref{approxeq}) and (\ref{approxeq-2}) are both
 almost surely positive. As a result, we are able to  get
the local in time well-posedness of  (\ref{SHE02}) and  (\ref{SHE01}) with irregular coefficient
$G$ satisfying Hypothesis H3. 
\begin{corollary}\label{localsolution}
Under the same hypotheses as Lemma \ref{variationwellposed-2}. Let $\varepsilon,\delta>0$, in the conservative case, we further assume that $\varepsilon<\varepsilon_0$, where $\varepsilon_0$ is the constant that appears in Lemma \ref{variationwellposed-2}. Let $\tau^{\varepsilon,\delta}_{\gamma}$ defined by (\ref{randomtime}). Then, there exists an $\varepsilon_0>0$ such that for every $\varepsilon\in(0,\varepsilon_0)$, there exists a unique local in time $H^{-1}(\mathbb{T}^d)$-variational solution of  (\ref{SHE01}) with initial data $u_0$, in the sense of Definition \ref{variation-local} and Definition \ref{localuniqueness}. Moreover, there exists a unique local in time mild solution of  (\ref{SHE02}) with initial data $u_0$, in the sense of Definition \ref{mild-local} and Definition \ref{localuniqueness}.
\end{corollary}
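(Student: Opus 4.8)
The plan is to read off the local-in-time solution from the globally well-posed mollified equations \eqref{approxeq}, \eqref{approxeq-2}: the solution of \eqref{SHE01} (resp.\ \eqref{SHE02}) is obtained by stopping $\rho^{\varepsilon,\delta}$ at the first exit time \eqref{randomtime} from the interval on which $G_0\equiv G$, and uniqueness is obtained by reducing any competing local solution to the same mollified equation. In the conservative case we take $\varepsilon\in(0,\varepsilon_0)$ with $\varepsilon_0$ as in Lemma \ref{variationwellposed-2}; in the non-conservative case no such restriction is needed, since \eqref{approxeq-2} is globally well posed for all $\varepsilon>0$.

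For existence, let $\rho^{\varepsilon,\delta}$ be the global solution of \eqref{approxeq} (resp.\ \eqref{approxeq-2}) from Lemma \ref{variationwellposed-2} and $\tau^{\varepsilon,\delta}_\gamma$ the stopping time \eqref{randomtime}, which by Lemma \ref{stopproperty} is an $\{\mathcal F(t)\}_{t\in[0,T]}$-stopping time with $\tau^{\varepsilon,\delta}_\gamma\in(0,T]$ almost surely. Set $u^{\varepsilon,\delta}(t):=\rho^{\varepsilon,\delta}(t)$ for $t\le\tau^{\varepsilon,\delta}_\gamma$, extended after $\tau^{\varepsilon,\delta}_\gamma$ as in \eqref{extension-main}; the two prescriptions \eqref{stop-intro} and \eqref{randomtime} then define the same random time, since $u^{\varepsilon,\delta}=\rho^{\varepsilon,\delta}$ on $[0,\tau^{\varepsilon,\delta}_\gamma]$, and the resulting process is $\{\mathcal F(t)\}_{t\in[0,T]}$-adapted because $\rho^{\varepsilon,\delta}$ is adapted and $\tau^{\varepsilon,\delta}_\gamma$ is a stopping time. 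By the characterization \eqref{equirepresen} of $\tau^{\varepsilon,\delta}_\gamma$ as an exit time of the set $A_\gamma$, which is closed in $H^{-1}(\mathbb T^d)$ (see the proof of Lemma \ref{stopproperty}), together with the path continuity $\rho^{\varepsilon,\delta}\in C([0,T];H^{-1}(\mathbb T^d))$, one has $\rho^{\varepsilon,\delta}(t,\cdot)\in[{\rm ess\,inf}\,u_0-\gamma,{\rm ess\,sup}\,u_0+\gamma]$ for a.e.\ $x$ and every $t\in[0,\tau^{\varepsilon,\delta}_\gamma]$, so $G_0(\rho^{\varepsilon,\delta})=G(\rho^{\varepsilon,\delta})$ there by \eqref{G0}. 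Hence the defining variational (resp.\ mild) identity for $\rho^{\varepsilon,\delta}$, evaluated at $t\wedge\tau^{\varepsilon,\delta}_\gamma$, becomes exactly the identity required of a local-in-time solution of \eqref{SHE01} in Definition \ref{variation-local}(ii) (resp.\ of \eqref{SHE02} in Definition \ref{mild-local}(ii)). The remaining conditions---path regularity in $C([0,\tau^{\varepsilon,\delta}_\gamma];H^{-1})\cap L^2([0,\tau^{\varepsilon,\delta}_\gamma];L^2)$ (resp.\ $C([0,\tau^{\varepsilon,\delta}_\gamma];L^2)$), adaptedness, and $1_{[0,\tau^{\varepsilon,\delta}_\gamma]}G(u^{\varepsilon,\delta})\in L^2([0,T]\times\Omega\times\mathbb T^d)$---follow from Lemma \ref{variationwellposed-2} and the boundedness of $G$ on the above interval (Hypothesis H3). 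Thus $(u^{\varepsilon,\delta},\tau^{\varepsilon,\delta}_\gamma)$ is a local-in-time solution.

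For uniqueness, let $(u_1,\tau_1),(u_2,\tau_2)$ be two local-in-time solutions. On $[0,\tau_i]$ each $u_i$ takes values in the interval $[{\rm ess\,inf}\,u_0-\gamma,{\rm ess\,sup}\,u_0+\gamma]$ on which $G$ is prescribed by Hypothesis H3, so $G(u_i)=G_0(u_i)$ there and $(u_i,\tau_i)$ is a stopped solution of the mollified equation. Extending $u_i$ to $[0,T]$ by solving \eqref{approxeq} (resp.\ \eqref{approxeq-2}) from time $\tau_i$ with datum $u_i(\tau_i)$ and gluing---legitimate because $\tau_i$ is a stopping time and the mollified equations are globally well posed from any initial time by Lemma \ref{variationwellposed-2}---produces a global solution of the mollified equation started at $u_0$, which by the uniqueness in Lemma \ref{variationwellposed-2} must equal $\rho^{\varepsilon,\delta}$; hence $u_i=\rho^{\varepsilon,\delta}$ on $[0,\tau_i]$, almost surely. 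Therefore $u_1=u_2(=\rho^{\varepsilon,\delta})$ a.e.\ on $[0,\tau_1\wedge\tau_2\wedge\tau^{\varepsilon,\delta}_\gamma]$, which is uniqueness in the sense of Definition \ref{localuniqueness}. The non-conservative case is identical, using mild solutions and \cite{walsh,Daprato} in place of the $H^{-1}$-variational theory.

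The main obstacle is the uniqueness step, and more precisely the localization argument hidden in it: one must verify carefully that a local-in-time solution of \eqref{SHE01}/\eqref{SHE02} coincides, on the random interval where it remains inside $[{\rm ess\,inf}\,u_0-\gamma,{\rm ess\,sup}\,u_0+\gamma]$, with a stopped solution of the mollified equation, and that the global uniqueness of Lemma \ref{variationwellposed-2} is stable under the gluing at the stopping time---in the variational framework this requires care with the $dt\otimes\mathbb P$-equivalence classes of $\hat u^{\varepsilon,\delta}$ and with adaptedness of the glued process. A cleaner route, avoiding the gluing, is to establish a self-contained ``uniqueness up to a stopping time'' estimate for the mollified equation by applying It\^o's formula to $\|u_i-\rho^{\varepsilon,\delta}\|_{H^{-1}(\mathbb T^d)}^2$ (resp.\ $\|u_i-\rho^{\varepsilon,\delta}\|_{L^2(\mathbb T^d)}^2$) up to the exit time, using the Lipschitz bound on $G_0$ together with the absorption estimate \eqref{absorb} (resp.\ its non-conservative analogue) and Gronwall's inequality.
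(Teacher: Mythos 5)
Your existence argument mirrors the paper: stop the global solution $\rho^{\varepsilon,\delta}$ of the mollified equation at $\tau^{\varepsilon,\delta}_\gamma$, use that $G_0\equiv G$ on the range $[{\rm ess\,inf}\,u_0-\gamma,{\rm ess\,sup}\,u_0+\gamma]$, and verify the conditions of Definitions \ref{variation-local}/\ref{mild-local} from Lemmas \ref{variationwellposed-2} and \ref{stopproperty}. That part is fine.

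The uniqueness argument as written has a genuine gap. You assert that for an arbitrary competing local solution $(u_i,\tau_i)$ one has $u_i(t,\cdot)\in[{\rm ess\,inf}\,u_0-\gamma,{\rm ess\,sup}\,u_0+\gamma]$ for all $t\le\tau_i$, so that $G(u_i)=G_0(u_i)$ there. But Definitions \ref{variation-local} and \ref{mild-local} place no such constraint on $u_i$ or on $\tau_i$: a local solution could leave the interval long before $\tau_i$, in which case $G(u_i)\ne G_0(u_i)$ and the reduction to the mollified equation breaks down. The gluing step compounds the problem: to continue $u_i$ past $\tau_i$ and appeal to global uniqueness for \eqref{approxeq}, you already need to know that $u_i$ solves \eqref{approxeq} on $[0,\tau_i]$, which is exactly what is in doubt, so the argument is circular. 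The paper avoids this by introducing the solution-dependent exit time $\tau_{1,\gamma}$ (the first time $u^1$ leaves the interval), proving $u^1=\rho^{\varepsilon,\delta}$ on $[0,\tau^{\varepsilon,\delta}_\gamma\wedge\tau_{1,\gamma})$ by It\^o's formula and the weak-monotonicity estimate \eqref{H-1unique}, and then closing the loop with a separate contradiction argument to show $\tau_{1,\gamma}=\tau^{\varepsilon,\delta}_\gamma\wedge\tau^1$ a.s.; the closed-interval statement is obtained at the end from $H^{-1}$-continuity. The alternative you sketch in your final paragraph (It\^o's formula up to the exit time with Gronwall) is essentially this, and is the correct route; but as presented it is only a side remark while your main uniqueness argument rests on the unjustified range claim, so the proof is incomplete.

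One further point: the paper's It\^o argument for uniqueness imposes an additional smallness condition, $\varepsilon<\|G_0^{(1)}\|_{L^\infty}^{-2}C_\delta^{-1}$, in order to absorb the noise contribution; this is the reason for the phrase ``there exists $\varepsilon_0>0$'' in the statement, and your proposal does not account for it.
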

\begin{proof}
Let $\rho^{\varepsilon,\delta}$ be the $H^{-1}(\mathbb{T}^d)$-variational solution of (\ref{approxeq}) with initial data $u_0$. For $\gamma>0$, let $\tau^{\varepsilon,\delta}_{\gamma}$ be defined by (\ref{randomtime}). By Lemma \ref{variational-solu}, there exists an event $\tilde{\Omega}_1\in\mathcal{F}$ depends on $T, u_0$, with $\mathbb{P}(\tilde{\Omega}_1)=1$ such that for $\omega\in\tilde{\Omega}_1$, the solution $\rho^{\varepsilon,\delta}$ of  (\ref{approxeq}) satisfies that $\rho^{\varepsilon,\delta}(\omega)\in C([0,T];H^{-1}(\mathbb{T}^d))\cap L^2([0,T];L^2(\mathbb{T}^d))$. Therefore, it holds that 
\begin{align}\label{contH-1}
\rho^{\varepsilon,\delta}(\omega)\in C([0,\tau^{\varepsilon,\delta}_{\gamma}(\omega)];H^{-1}(\mathbb{T}^d))\cap L^2([0,\tau^{\varepsilon,\delta}_{\gamma}(\omega)];L^2(\mathbb{T}^d)),
\end{align}
for every $\omega\in\tilde{\Omega}_1$. By the definition of $\tau^{\varepsilon,\delta}_{\gamma}$, for any $n>0$, we have that $\mathbb{P}$-almost surely, 
\begin{equation*}
	\sup_{t\in[0,\tau^{\varepsilon,\delta}_{\gamma})}\|\rho^{\varepsilon,\delta}(t)-\frac{K+K'}{2}\|_{L^{\infty}(\mathbb{T}^d)}\leq\gamma+\frac{K-K'}{2}+\frac{1}{n}.  
\end{equation*}
Combining with (\ref{contH-1}), it follows that $\mathbb{P}$-almost surely, there exists a sequence $(\rho^{\varepsilon,\delta}(t_k))_{k\geq1}$, such that 
\begin{align*}
\rho^{\varepsilon,\delta}(t_k)\rightharpoonup\rho^{\varepsilon,\delta}(\tau^{\varepsilon,\delta}_{\gamma})	,
\end{align*}
weakly* in $L^{\infty}(\mathbb{T}^d)$, as $k\rightarrow\infty$. Therefore, the lower semi-continuity of the $L^{\infty}(\mathbb{T}^d)$-norm implies that $\mathbb{P}$-almost surely, $\|\rho^{\varepsilon,\delta}(\tau^{\varepsilon,\delta}_{\gamma})-\frac{K+K'}{2}\|_{L^{\infty}(\mathbb{T}^d)}\leq\gamma+\frac{K-K'}{2}+\frac{1}{n}$. Combining with Lemma \ref{stopproperty}, $(\rho^{\varepsilon,\delta},\tau^{\varepsilon,\delta}_{\gamma})$ is a local in time $H^{-1}(\mathbb{T}^d)$ variational solution for  (\ref{SHE01}). 

It remains to show the uniqueness up to $\tau^{\varepsilon,\delta}_{\gamma}$. Let $(u^1,\tau^1)$, $(u^2,\tau^2)$ be any two local in time $H^{-1}(\mathbb{T}^d)$-variational solutions of (\ref{SHE01}) with initial data $u_0$. We assume that $\tau^1,\tau^2\in(0,T]$, $
\mathbb{P}$-almost surely. The definition of the stopping time $\tau^{\varepsilon,\delta}_{\gamma}$ implies that $\mathbb{P}$-almost surely, 
\begin{align*}
\rho^{\varepsilon,\delta}(t,x)\in\Big[{\rm{ess\inf}}u_0-\gamma,{\rm{ess\sup}}u_0+\gamma\Big],   	
\end{align*}
for almost every $x\in\mathbb{T}^d$, $t<\tau^{\varepsilon,\delta}_{\gamma}$. Therefore, by the definition of $G_0$, we have that $\mathbb{P}$-almost surely, 
\begin{align*}
G(\rho^{\varepsilon,\delta}(t,x))=G_0(\rho^{\varepsilon,\delta}(t,x)),\ \ \text{for a.e. } x\in\mathbb{T}^d,\ \text{and for every }t<\tau^{\varepsilon,\delta}_{\gamma}	. 
\end{align*}
Recall that $K={\rm{ess}}\sup u_0$, $K'={\rm{ess}}\inf u_0$. Define the stopping time
\begin{equation*}
\tau_{1,\gamma}:=\inf\Big\{t\in[0,\tau^1);\|u^1(t)-\frac{K+K'}{2}\|_{L^{\infty}(\mathbb{T}^d)}>\gamma+\frac{K-K'}{2}\Big\}\wedge \tau^1. 
\end{equation*}
By subtracting $\rho^{\varepsilon,\delta}$ and $u^1$, we have that $\mathbb{P}$-almost surely, 
\begin{align*}
d(\rho^{\varepsilon,\delta}-u^1)=&\Delta(\rho^{\varepsilon,\delta}-u^1)+\varepsilon^{\frac{1}{2}}\nabla\cdot\Big((G_0(\rho^{\varepsilon,\delta})-G(u^1))dW^{\delta}(t)\Big)\\
=&\Delta(\rho^{\varepsilon,\delta}-u^1)+\varepsilon^{\frac{1}{2}}\nabla\cdot\Big((G_0(\rho^{\varepsilon,\delta})-G_0(u^1))dW^{\delta}(t)\Big),	
\end{align*}
holds in $(L^2(\mathbb{T}^d))^*$, for every $t<\tau^{\varepsilon,\delta}_{\gamma}\wedge\tau_{1,\gamma}$. By using It\^o's formula (see \cite[Theorem 4.2.5]{LR}), it follows from (\ref{H-1unique}) that there exists constant $C_{\delta}>0$ depending on the correlation structure of the noise, such that $\mathbb{P}$-almost surely, 
\begin{align*}
&\mathbb{E}\|\rho^{\varepsilon,\delta}(t,\omega)-u^1(t,\omega)\|_{H^{-1}(\mathbb{T}^d)}^2+\mathbb{E}\int_0^t\|\rho^{\varepsilon,\delta}(s,\omega)-u^1(s,\omega)\|_{L^2(\mathbb{T}^d)}^2ds\\
\leq&\Big(\frac{\varepsilon\|G_0^{(1)}(\cdot)\|_{L^{\infty}(\mathbb{R})}^2C_{\delta}}{2}\Big)\mathbb{E}\int_0^t\|\rho^{\varepsilon,\delta}(s,\omega)-u^1(s,\omega)\|_{L^2(\mathbb{T}^d)}^2ds,
\end{align*}
for every $t<\tau^{\varepsilon,\delta}_{\gamma}\wedge\tau_{1,\gamma}$. This implies that there exists $\tilde{\Omega}_1\in\mathcal{F}$ depending on $T, u_0$, with $\mathbb{P}(\tilde{\Omega}_1)=1$, such that for every $\varepsilon<\frac{1}{\|G_0^{(1)}(\cdot)\|_{L^{\infty}(\mathbb{R})}^2C_{\delta}}$, 
\begin{align}\label{locauniq-1}
	\rho^{\varepsilon,\delta}(t,x,\omega)=u^1(t,x,\omega),\ \ \text{for a.e. } x\in\mathbb{T}^d, 
\end{align}
for every $t<\tau^{\varepsilon,\delta}_{\gamma}(\omega)\wedge\tau_{1,\gamma}(\omega)$ and every $\omega\in\tilde{\Omega}_1$.

In the following, we claim that $\mathbb{P}$-almost surely, $\tau^{\varepsilon,\delta}_{\gamma}\wedge\tau^1=\tau_{1,\gamma}$. This can be proved by contradiction. Assume that $\tilde{\Omega}_1\cap\{\tau^{\varepsilon,\delta}_{\gamma}\wedge\tau^1>\tau_{1,\gamma}\}$ is not an empty set. For any $\omega\in\tilde{\Omega}_1\cap\{\tau^{\varepsilon,\delta}_{\gamma}\wedge\tau^1>\tau_{1,\gamma}\}$, 
\begin{align*}
\tau_{1,\gamma}(\omega)=&\inf\Big\{t\in[0,\tau^1(\omega));\|u^1(t,\omega)-\frac{K+K'}{2}\|_{L^{\infty}(\mathbb{T}^d)}>\gamma+\frac{K-K'}{2}\Big\}\wedge \tau^1(\omega)\\
\geq&\inf\Big\{t\in(0,\tau^{\varepsilon,\delta}_{\gamma}(\omega)\wedge\tau^1(\omega));\|u^1(t,\omega)-\frac{K+K'}{2}\|_{L^{\infty}(\mathbb{T}^d)}>\gamma+\frac{K-K'}{2}\Big\}\wedge \tau^1(\omega)\\
=&\inf\Big\{t\in(0,\tau^{\varepsilon,\delta}_{\gamma}(\omega)\wedge\tau^1(\omega));\|\rho^{\varepsilon,\delta}(t,\omega)-\frac{K+K'}{2}\|_{L^{\infty}(\mathbb{T}^d)}>\gamma+\frac{K-K'}{2}\Big\}\wedge \tau^1(\omega)\\
=&\tau^{\varepsilon,\delta}_{\gamma}(\omega)\wedge\tau^1(\omega),  
\end{align*}
which leads to a contradiction. Consequently, $\{\tau^{\varepsilon,\delta}_{\gamma}\wedge\tau^1>\tau_{1,\gamma}\}\subset\Omega/\tilde{\Omega}_1$. Similarly, we have $\{\tau^{\varepsilon,\delta}_{\gamma}\wedge\tau^1<\tau_{1,\gamma}\}\subset\Omega/\tilde{\Omega}_1$, which implies $\mathbb{P}(\tau^{\varepsilon,\delta}_{\gamma}\wedge\tau^1=\tau_{1,\gamma})=1$. 

The same argument can be applied to $u^2$ as well. As a consequence, we find that $\mathbb{P}$-almost surely,
\begin{equation*}
	u^1(t,x)=\rho^{\varepsilon,\delta}(t,x)=u^2(t,x),\ \ \text{for a.e. } x\in\mathbb{T}^d, 
\end{equation*}
for every $t<\tau^1\wedge\tau^2\wedge\tau^{\varepsilon,\delta}_{\gamma}$. Furthermore, since we have that $\mathbb{P}$-almost surely, 
	\begin{equation*}
\rho^{\varepsilon,\delta}\in C([0,T];H^{-1}(\mathbb{T}^d)).
\end{equation*}
Therefore, it follows that $\mathbb{P}$-almost surely, 
\begin{align}\label{conti-H-1}
\lim_{t\uparrow\tau^{\varepsilon,\delta}_{\gamma}}\|u^{1}(t)-\rho^{\varepsilon,\delta}(\tau^{\varepsilon,\delta}_{\gamma})\|_{H^{-1}(\mathbb{T}^d)}=&\lim_{t\uparrow\tau^{\varepsilon,\delta}_{\gamma}}\|u^{2}(t)-\rho^{\varepsilon,\delta}(\tau^{\varepsilon,\delta}_{\gamma})\|_{H^{-1}(\mathbb{T}^d)}\notag\\
=&\lim_{t\uparrow\tau^{\varepsilon,\delta}_{\gamma}}\|\rho^{\varepsilon,\delta}(t)-\rho^{\varepsilon,\delta}(\tau^{\varepsilon,\delta}_{\gamma})\|_{H^{-1}(\mathbb{T}^d)}=0. 	
\end{align}
This shows that $\mathbb{P}$-almost surely, 
\begin{equation*}
	u^1(t,x)=\rho^{\varepsilon,\delta}(t,x)=u^2(t,x),\ \ \text{for a.e. } x\in\mathbb{T}^d, 
\end{equation*}
for every $t\in[0,\tau^1\wedge\tau^2\wedge\tau^{\varepsilon,\delta}_{\gamma}]$. Then the local in time uniqueness for (\ref{SHE01}) up to $\tau^{\varepsilon,\delta}_{\gamma}$ holds in the sense of Definition \ref{localuniqueness}. Moreover, by Lemma \ref{lem-moser-1} and Lemma \ref{stopproperty}, there exists an $\varepsilon_0=\varepsilon_0(\delta,C,\gamma')>0$, such that for every $\varepsilon\in(0,\varepsilon_0)$, $\mathbb{P}(\tau^{\varepsilon,\delta}_{\gamma}>0)=1$, which implies the local in time well-posedness for (\ref{SHE01}).

For the case of non-conservative noise, the same argument shows that for every $\varepsilon>0$, there exists an unique local in time mild solution for (\ref{SHE02}). 

\end{proof}

With the help of the $L^p$-estimate, combining with Corollary \ref{localsolution}, the existence and the uniqueness of the local in time mild solution for (\ref{SHE01}) can be shown. 
\begin{lemma}\label{localmildboth}
Under the same hypotheses as Lemma \ref{variationwellposed-2}. Let $\delta>0$, $\varepsilon\in(0,\varepsilon_0)$, where $\varepsilon_0$ is the constant that appears in Lemma \ref{variationwellposed-2}. Let $(u^{\varepsilon,\delta},\tau^{\varepsilon,\delta}_{\gamma})$ be the local in time $H^{-1}(\mathbb{T}^d)$-variational solution of (\ref{SHE01}) with initial data $u_0$, then $u^{\varepsilon,\delta}$ is the local in time mild solution of (\ref{SHE01}) in the sense of Definition \ref{mild-local}. 	
\end{lemma}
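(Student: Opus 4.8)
The plan is to realise $u^{\varepsilon,\delta}$ as a stopped version of the global variational solution of the truncated equation \eqref{approxeq}, for which the passage from the variational to the mild formulation is already available by Lemma \ref{variationwellposed-2}, and then to localise the resulting global mild identity at the stopping time $\tau^{\varepsilon,\delta}_{\gamma}$. \emph{Identification.} Let $\rho:=\rho^{\varepsilon,\delta}$ be the global $H^{-1}(\mathbb{T}^d)$-variational solution of \eqref{approxeq}, which exists and is unique for $\varepsilon\in(0,\varepsilon_0)$ by Lemma \ref{variationwellposed-2}. By the construction and uniqueness in Corollary \ref{localsolution}, $\mathbb{P}$-almost surely $u^{\varepsilon,\delta}(t)=\rho(t)$ for all $t\in[0,\tau^{\varepsilon,\delta}_{\gamma}]$, and, for all $t<\tau^{\varepsilon,\delta}_{\gamma}$, $\rho(t,x)\in[{\rm{ess\inf}}u_0-\gamma,\,{\rm{ess\sup}}u_0+\gamma]$ for a.e.\ $x$; hence by the definition \eqref{G0} of $G_0$ one has $G_0(\rho(s,\cdot))=G(\rho(s,\cdot))=G(u^{\varepsilon,\delta}(s,\cdot))$ for a.e.\ $x$ and all $s<\tau^{\varepsilon,\delta}_{\gamma}$. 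In particular $1_{[0,\tau^{\varepsilon,\delta}_{\gamma}]}\,G(u^{\varepsilon,\delta})$ is essentially bounded, so it belongs to $L^2([0,T]\times\Omega\times\mathbb{T}^d)$; together with the adaptedness and path regularity already granted by Corollary \ref{localsolution} and Definition \ref{variation-local}, this verifies part (i) and the integrability requirement of part (ii) of Definition \ref{mild-local}.

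\emph{Global mild identity.} By the last assertion of Lemma \ref{variationwellposed-2}, $\rho$ is also a global mild solution of \eqref{approxeq}, i.e.\ $\mathbb{P}$-a.s., for every $t\in[0,T]$,
\[
\rho(t,x)=(p(t)\ast u_0)(x)+\varepsilon^{\frac12}\int_0^t\langle\nabla_x p(t-s,x-\cdot),\,G_0(\rho(s,\cdot))\,dW^{\delta}(s)\rangle.
\]
I would read this as an identity of $H^{-1}(\mathbb{T}^d)$-valued processes: $\rho$ is $H^{-1}$-continuous in $t$ by definition, $p(t)\ast u_0=S(t)u_0$ is continuous, and consequently the stochastic convolution $Z(t):=\rho(t)-S(t)u_0=\varepsilon^{\frac12}\int_0^t\nabla S(t-s)G_0(\rho(s))\,dW^{\delta}(s)$ has a continuous-in-$t$ version; by Lemma \ref{maximalLp-cons} applied with the bounded coefficient $g=G_0(\rho)$ it is moreover $L^p(\mathbb{T}^d)$-valued with finite moments. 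Fixing this version, the mild identity holds $\mathbb{P}$-a.s.\ simultaneously for all $t\in[0,T]$.

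\emph{Localisation and conclusion.} Since the identity holds for all $t$ on one full-measure event and every term is $H^{-1}$-continuous in $t$, one may evaluate it at $t\wedge\tau^{\varepsilon,\delta}_{\gamma}$, obtaining $\mathbb{P}$-a.s.\ and for every $t\in[0,T]$,
\[
\rho(t\wedge\tau^{\varepsilon,\delta}_{\gamma},x)=\big(p(t\wedge\tau^{\varepsilon,\delta}_{\gamma})\ast u_0\big)(x)+\varepsilon^{\frac12}\int_0^{t\wedge\tau^{\varepsilon,\delta}_{\gamma}}\langle\nabla_x p(t\wedge\tau^{\varepsilon,\delta}_{\gamma}-s,x-\cdot),\,G_0(\rho(s,\cdot))\,dW^{\delta}(s)\rangle .
\]
Using $\rho(t\wedge\tau^{\varepsilon,\delta}_{\gamma})=u^{\varepsilon,\delta}(t\wedge\tau^{\varepsilon,\delta}_{\gamma})$ on the left, and replacing $G_0(\rho(s,\cdot))$ by $G(u^{\varepsilon,\delta}(s,\cdot))$ inside the integral (valid since $s<\tau^{\varepsilon,\delta}_{\gamma}$ for $ds$-a.e.\ $s\le t\wedge\tau^{\varepsilon,\delta}_{\gamma}$, by the Identification step), this is precisely the mild identity of Definition \ref{mild-local}(ii) for the pair $(u^{\varepsilon,\delta},\tau^{\varepsilon,\delta}_{\gamma})$. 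Combined with part (i) and the integrability already noted, $u^{\varepsilon,\delta}$ is a local in time mild solution of \eqref{SHE01}.

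\emph{Main obstacle.} The delicate point is the last evaluation: the stopped stochastic integral appearing in Definition \ref{mild-local} must be interpreted as the continuous-in-$r$ version of $r\mapsto\int_0^r\nabla S(r-s)G_0(\rho(s))\,dW^{\delta}(s)$ composed with the random time $r=t\wedge\tau^{\varepsilon,\delta}_{\gamma}$, since the naive expression with integrand $1_{[0,t\wedge\tau^{\varepsilon,\delta}_{\gamma}]}(s)\,\nabla S(t\wedge\tau^{\varepsilon,\delta}_{\gamma}-s)(\cdots)$ is not adapted (the kernel argument depends on the whole path). This is handled through the factorization representation of the stochastic convolution, whose post-factorization integrand is genuinely adapted, so that the standard localisation of Itô integrals at stopping times applies; the $L^p$-estimates of Section \ref{sec-3} ensure all quantities involved are finite. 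A secondary, routine matter is the passage between the $H^{-1}$-valued formulation used above and the ``for a.e.\ $x$'' formulation of Definition \ref{mild-local}(ii), which is harmless because, again by Lemma \ref{maximalLp-cons}, the stochastic convolution is $L^p(\mathbb{T}^d)$-valued with finite moments.
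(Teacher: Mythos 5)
Your proposal is correct and lands on the same underlying mechanism as the paper's proof: in both cases the crux is that a variational solution of the truncated, globally well-posed equation \eqref{approxeq} is also a mild solution (which the paper obtains from \cite[Theorem~3.2]{GM}, invoked inside Lemma~\ref{variationwellposed-2}), together with the $L^p$-bounds of Lemma~\ref{gen-littlewoodpaley} and Lemma~\ref{maximalLp-cons} to make the stochastic convolution well-defined, and the observation that $1_{[0,\tau^{\varepsilon,\delta}_\gamma]}G(u^{\varepsilon,\delta})$ is bounded, hence in $L^2([0,T]\times\Omega\times\mathbb{T}^d)$. The structural difference is that the paper's proof cites \cite[Theorem~3.2]{GM} directly on the \emph{local} weak formulation, whereas you first identify $u^{\varepsilon,\delta}$ with the global solution $\rho^{\varepsilon,\delta}$ of the truncated equation via Corollary~\ref{localsolution}, apply the global variational-to-mild implication of Lemma~\ref{variationwellposed-2}, and only then localise at $\tau^{\varepsilon,\delta}_\gamma$. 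Your route is in fact the cleaner one, because it sidesteps the question of whether the cited equivalence theorem applies verbatim to a stopped process, and it makes explicit the genuinely delicate point — that the stochastic convolution must be interpreted as a continuous-in-$r$ process composed with the random time $r=t\wedge\tau^{\varepsilon,\delta}_\gamma$, and that the naive "stopped-kernel'' integrand is not adapted; the factorisation-method remedy you sketch is the standard and appropriate one, and the paper glosses over this entirely. The only thing worth flagging is that you do not fully carry out the factorisation step, but since the paper itself elides it, your proof is at least as complete as the reference.
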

\begin{proof}
By Definition \ref{variation-local}, we have $\mathbb{P}$-almost surely,  
\begin{align*}
u^{\varepsilon,\delta}(t,\omega)=&u_0+\int^t_0\Delta u^{\varepsilon,\delta}(s,\omega)ds+\varepsilon^{\frac{1}{2}}\int^t_0\nabla\cdot(G(u^{\varepsilon,\delta}(s))dW^{\delta}(s))(\omega)
\end{align*}
in $(L^2(\mathbb{T}^d))^*$ for every $t<\tau^{\varepsilon,\delta}_{\gamma}$. Using the assumption of $G$ implies $1_{ [0,\tau^{\varepsilon,\delta}]}G(u^{\varepsilon,\delta}(s,\cdot)) \in L^2([0,T]\times\Omega\times\mathbb{T}^d)$. Combined with \cite[Theorem 3.2]{GM}, Lemma \ref{gen-littlewoodpaley} and Lemma \ref{maximalLp-cons}, it follows that $(u^{\varepsilon,\delta},\tau^{\varepsilon,\delta}_{\gamma})$ is a local in time mild solution of (\ref{SHE01}). 
\end{proof}

\section{Speed of divergence for the expansion coefficients}\label{sec-5}
By \cite{walsh}, \cite{Daprato}, the mild solution of (\ref{coe}) (resp. (\ref{ccoe})) is defined by induction, via 
\begin{align}\label{mildcoe}
\bar{u}^{k,\delta}(t,x)=\int_0^t\Big\langle p(t-s,x-\cdot),\Big[\sum_{l=1}^{k-1}\frac{1}{l!}G^{(l)}(\bar{u}^{0,\delta}(s,\cdot))\mathcal{J}^{\delta}(k,l)(s,\cdot)\Big]dW^{\delta}(s)\Big\rangle,
\end{align}
and
\begin{align}\label{mildcoe-cons}
\bar{u}^{k,\delta}(t,x)=\int_0^t\Big\langle \nabla_x p(t-s,x-\cdot),\Big[\sum_{l=1}^{k-1}\frac{1}{l!}G^{(l)}(\bar{u}^{0,\delta}(s,\cdot))\mathcal{J}^{\delta}(k,l)(s,\cdot)\Big]dW^{\delta}(s)\Big\rangle,
\end{align}
for $k\geq1$, respectively. The following two theorems show that the stochastic integral in (\ref{mildcoe}) and (\ref{mildcoe-cons}) are well-defined and estimates the speed of divergence of (\ref{mildcoe}) and (\ref{mildcoe-cons}) respectively. 
\subsection{The case of nonconservative noise}

\begin{theorem}\label{div-speed}
Let $n\in\mathbb{N}_+$, $\delta>0$. Assume that the initial data $u_0$ satisfies Hypothesis H1, and $G$ satisfies Hypothesis H2. Let $\bar{u}^{n,\delta}$ be defined by (\ref{mildcoe}), and let $K_i(\delta,d)$ be defined by (\ref{Kdelta}). Then there exists a positive constant $C(G,p,n)<\infty$ such that for every $p\in[1,\infty)$, 
\begin{align}\label{e-13}
\sup_{t\in[0,T],x\in\mathbb{T}^d}\mathbb{E}|\bar{u}^{n,\delta}(t,x)|^p\leq C(G,p,n)K_1(\delta,d)^{\frac{pn}{2}}.
\end{align}

\end{theorem}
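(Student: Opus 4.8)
The plan is to prove \eqref{e-13} by induction on $n\ge 1$, invoking at each step the pointwise maximal $L^p$-regularity estimate of Lemma~\ref{maximalLp-noncons} with $\varepsilon=1$. The key observation is that $\bar{u}^{n,\delta}$ given by \eqref{mildcoe} is precisely the mild solution (with $\varepsilon=1$) of a stochastic heat equation of the form \eqref{sto-convolution-1} whose noise coefficient is
\[
g_n:=\sum_{l=1}^{n-1}\frac{1}{l!}\,G^{(l)}(\bar{u}^{0,\delta})\,\mathcal{J}^{\delta}(n,l),
\]
with the convention that for $n=1$ the coefficient is $g_1=G(\bar{u}^{0,\delta})$ (cf.\ \eqref{coe} with $k=1$). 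Hence, as soon as the induction hypothesis guarantees that $g_n$ satisfies the integrability condition \eqref{Lpforg} — which simultaneously shows that the stochastic integral in \eqref{mildcoe} is well defined — Lemma~\ref{maximalLp-noncons} yields
\[
\sup_{t\in[0,T],\,x\in\mathbb{T}^d}\mathbb{E}\big|\bar{u}^{n,\delta}(t,x)\big|^p\le C(p)\,K_1(\delta,d)^{p/2}\sup_{t\in[0,T],\,x\in\mathbb{T}^d}\mathbb{E}\big|g_n(t,x)\big|^p .
\]
Thus everything reduces to showing $\sup_{t,x}\mathbb{E}|g_n(t,x)|^p\le C(G,p,n)\,K_1(\delta,d)^{p(n-1)/2}$ for each $p\in[1,\infty)$, since $\tfrac12+\tfrac{n-1}{2}=\tfrac n2$.

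For $n=1$, $g_1=G(\bar{u}^{0,\delta})$ is bounded by $\|G\|_{L^\infty(\mathbb{R})}$ thanks to Hypothesis~H2, so Lemma~\ref{maximalLp-noncons} gives \eqref{e-13} directly. For the inductive step, fix $n\ge 2$ and assume \eqref{e-13} for all $\bar{u}^{k,\delta}$ with $1\le k\le n-1$ and all $p\in[1,\infty)$. Since $G^{(l)}\in C_b(\mathbb{R})$ and $\Lambda(n,l)$ is finite, bounding $|G^{(l)}(\bar{u}^{0,\delta})|\le\|G^{(l)}\|_{L^\infty(\mathbb{R})}$ in \eqref{Jkl} gives
\[
|g_n(t,x)|\le C(G,n)\sum_{l=1}^{n-1}\ \sum_{(q_1,\dots,q_{n-l})\in\Lambda(n,l)}\ \prod_{i=1}^{n-l}\big|\bar{u}^{i,\delta}(t,x)\big|^{q_i}.
\]
Taking $p$-th moments, for each fixed $l$ and multi-index $(q_1,\dots,q_{n-l})\in\Lambda(n,l)$ we apply the generalized Hölder inequality over the indices $i$ with $q_i\ge 1$, with equal exponents $s_i=m$, $m$ being the number of such indices; then, using the induction hypothesis (valid since $q_i p s_i\ge 1$) and the relation $\sum_i i\,q_i=n-1$ built into $\Lambda(n,l)$,
\[
\mathbb{E}\prod_{i}\big|\bar{u}^{i,\delta}(t,x)\big|^{q_i p}\le\prod_{i}\Big(\mathbb{E}\big|\bar{u}^{i,\delta}(t,x)\big|^{q_i p s_i}\Big)^{1/s_i}\le C(G,p,n)\prod_{i}K_1(\delta,d)^{q_i p i/2}= C(G,p,n)\,K_1(\delta,d)^{p(n-1)/2}.
\]
Summing over the finitely many $l$ and multi-indices and feeding this into the displayed consequence of Lemma~\ref{maximalLp-noncons} closes the induction for $p\ge 2$; the range $p\in[1,2)$ follows by Hölder's inequality in $\omega$, exactly as in the proof of Lemma~\ref{maximalLp-noncons}.

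The main difficulty is the combinatorial bookkeeping: one must read off the sharp exponent $K_1(\delta,d)^{p(n-1)/2}$ from the \emph{weighted} constraint $\sum_i i\,q_i=n-1$ (not $\sum_i q_i=l$) encoded in $\Lambda(n,l)$, choose the Hölder exponents so that every factor is estimated at a moment order admitted by the induction hypothesis, and check that the resulting constants depend only on $G$, $p$, $n$ — which holds because only finitely many moment orders, all controlled in terms of $n$ and $p$, ever appear, and $K_1(\delta,d)$ enters only through exact exponent arithmetic.
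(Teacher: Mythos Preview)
Your proposal is correct and follows essentially the same approach as the paper: induction on $n$, with the base case and the inductive step both driven by Lemma~\ref{maximalLp-noncons}, and the sharp exponent extracted from the constraint $\sum_i i q_i=n-1$ in $\Lambda(n,l)$ via H\"older's inequality, followed by the H\"older extension to $p\in[1,2)$. The only cosmetic difference is your specific choice of equal H\"older exponents $s_i=m$ rather than the paper's generic $(p_j)_j$ with $\sum_j 1/p_j=1$, which makes no difference to the argument.
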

\begin{proof}
We argue by induction.

{\bf Step 1}. For $n=1$, $\bar{u}^{1,\delta}$ is written as
\begin{equation}\label{mildu1}
\bar{u}^{1,\delta}(t,x)=\int_0^t\langle p(t-s,x-\cdot),G(\bar{u}^{0,\delta}(s,\cdot))dW^{\delta}(s)\rangle.
\end{equation}
By the assumption on $G$ and $u_0$, by \cite{walsh}, the stochastic integral in (\ref{mildu1}) is well-defined, and it is the unique mild solution of (\ref{coe}). Combining with Lemma \ref{K2} and Lemma \ref{maximalLp-noncons}, the $L^p$-isometry of the stochastic integral (see \cite[Corollary 3.11]{NW}), the assumption {\color{blue}on} $G$ and $u_0$ implies that for every $p\in[2,\infty)$,  
\begin{align}\label{1-blowupspeed}
\mathbb{E}|\bar{u}^{1,\delta}(t,x)|^p\lesssim \sup_{t\in[0,T],x\in\mathbb{T}^d}\mathbb{E}|G(\bar{u}^{0,\delta}(t,x))|^pK_1(\delta,d)^{\frac{p}{2}}\leq C(G,p)K_1(\delta,d)^{\frac{p}{2}}.
\end{align}
Furthermore, using H\"older's inequality, we are able to see that (\ref{1-blowupspeed}) holds for every $p\in[1,2)$. 

{\bf Step 2}. For $k\in\mathbb{N}_+$, assume that (\ref{e-13}) holds for $n=1,\dots,k-1$. We aim to prove (\ref{e-13}) for $n=k$. The mild solution $\bar{u}^{k,\delta}$ satisfies
\begin{align}\label{milduk}
\bar{u}^{k,\delta}(t,x)=\int_0^t\Big\langle p(t-s,x-\cdot),\Big[\sum_{l=1}^{k-1}\frac{1}{l!}G^{(l)}(\bar{u}^{0,\delta}(s,\cdot))\mathcal{J}^{\delta}(k,l)(s,\cdot)\Big]dW^{\delta}(s)\Big\rangle,
\end{align}
where $\mathcal{J}^{\delta}(k,l)$ is defined by (\ref{Jkl}). By the Hypothesis H1, H2 on $G$ and $u_0$, and the induction hypothesis for $n=1,...,k-1$, by \cite{walsh}, the stochastic integral in (\ref{milduk}) is well-defined, and it is the unique mild solution of (\ref{coe}) (resp. (\ref{ccoe})). Combining with Lemma \ref{K2} and Lemma \ref{maximalLp-noncons},
\begin{align*}
\mathbb{E}|\bar{u}^{k,\delta}(t,x)|^p\leq&C(G,p)\sup_{s\in[0,T],y\in\mathbb{T}^d}\mathbb{E}\Big(\sum_{l=1}^{k-1}\frac{1}{l!}|\mathcal{J}^{\delta}(k,l)(s,y)|\Big)^pK_1(\delta,d)^{\frac{p}{2}}.
\end{align*}
For every $a_i\in\mathbb{R}$, $i=1,...,n$, there exists a constant depends on $p,n$ such that 
\begin{equation}\label{p-inequality}
(a_1+a_2+...+a_n)^p\leq C(p,n)(|a_1|^p+|a_2|^p+...+|a_n|^p).\end{equation}
With the help of the above inequality, it follows from the definition of $\mathcal{J}^{\delta}(k,l)$ in (\ref{Jkl}) and Lemma \ref{K2} that 
\begin{align*}
\mathbb{E}|\bar{u}^{k,\delta}(t,x)|^p\leq&K_1(\delta,d)^{\frac{p}{2}}C(G,p,k)\\
&\cdot\Big(\sum_{l=1}^{k-1}\frac{1}{|l!|^p}\sum_{(q_1,\dots,q_{k-l})\in\Lambda(k,l)}(\frac{l!}{q_1!\dots q_{k-l}!})^p\sup_{s\in[0,T],y\in\mathbb{T}^d}\mathbb{E}|\prod_{1\leq j\leq k-l}(\bar{u}^{j,\delta}(s,y))^{q_j}|^p\Big).
\end{align*}
For every $1\leq l\leq k-1$, let $(p_j)_{1\leq j\leq k-l}$ be a sequence of numbers with $p_j\in[1,\infty)$, $1\leq j\leq k-l$, and $\sum_{1\leq j\leq k-l}\frac{1}{p_j}=1$. By the induction hypothesis and H\"older's inequality, we have 
\begin{align*}
&\sum_{(q_1,\dots,q_{k-l})\in\Lambda(k,l)}(\frac{l!}{q_1!\dots q_{k-l}!})^p\sup_{s\in[0,T],y\in\mathbb{T}^d}\mathbb{E}|\prod_{1\leq j\leq k-l}(\bar{u}^{j,\delta}(s,y))^{q_j}|^p\\
\leq&\sum_{(q_1,\dots,q_{k-l})\in\Lambda(k,l)}(\frac{l!}{q_1!\dots q_{k-l}!})^p\sup_{s\in[0,T],y\in\mathbb{T}^d}\prod_{1\leq j\leq k-l}\Big(\mathbb{E}(\bar{u}^{j,\delta}(s,y))^{pq_jp_j}\Big)^{\frac{1}{p_j}}\\
\leq&\sum_{(q_1,\dots,q_{k-l})\in\Lambda(k,l)}(\frac{l!}{q_1!\dots q_{k-l}!})^p\prod_{1\leq j\leq k-l}\Big(\sup_{s\in[0,T],y\in\mathbb{T}^d}\mathbb{E}(\bar{u}^{j,\delta}(s,y))^{pq_jp_j}\Big)^{\frac{1}{p_j}}\\
\lesssim&\sum_{(q_1,\dots,q_{k-l})\in\Lambda(k,l)}(\frac{l!}{q_1!\dots q_{k-l}!})^p\prod_{1\leq j\leq k-l}\Big(K_1(\delta,d)^{\frac{jq_jp_jp}{2}}\Big)^{\frac{1}{p_j}}\\
=&\sum_{(q_1,\dots,q_{k-l})\in\Lambda(k,l)}(\frac{l!}{q_1!\dots q_{k-l}!})^pK_1(\delta,d)^{\sum_{1\leq j\leq k-l}\frac{jq_jp}{2}}.
\end{align*}
Further, the fact that $(q_1,\dots,q_{k-l})\in\Lambda(k,l)$ yields $\sum_{1\leq j\leq k-l}jq_j=k-1$. As a result, we conclude that
\begin{align*}
\sup_{t\in[0,T],x\in\mathbb{T}^d}\mathbb{E}|\bar{u}^{k,\delta}(t,x)|^p\leq& K_1(\delta,d)^{\frac{p}{2}}C(G,p,k)K_1(\delta,d)^{\frac{p(k-1)}{2}}\\
=&C(G,p,k)K_1(\delta,d)^{\frac{pk}{2}},
\end{align*}
which implies (\ref{e-13}) for $n=k$, $p\in[2,\infty)$. Induction completes the proof for $p\in[2,\infty)$. When $p\in[1,2)$, by H\"older's inequality, we have 
\begin{align*}
	\sup_{t\in[0,T],x\in\mathbb{T}^d}\mathbb{E}|\bar{u}^{n,\delta}(t,x)|^p\leq\Big(\sup_{t\in[0,T],x\in\mathbb{T}^d}\mathbb{E}|\bar{u}^{n,\delta}(t,x)|^2\Big)^{\frac{p}{2}}\leq C(G,p,n)K_1(\delta,d)^{\frac{np}{2}},
\end{align*}
which completes the proof. 
\end{proof}

\subsection{The case of conservative noise}

\begin{theorem}\label{div-speed-cons}
Let $n\in\mathbb{N}_+$, $\delta>0$. Assume that the initial data $u_0$ satisfies Hypothesis H1, and $G$ satisfies Hypothesis H2. Let $\bar{u}^{n,\delta}$ be defined by (\ref{mildcoe}). Then there exists a positive constant $C(G,p,n)<\infty$ such that for every $p\in[1,\infty)$, 
\begin{align}\label{e-13-cons}
\mathbb{E}\|\bar{u}^{n,\delta}\|_{L^p([0,T]\times\mathbb{T}^d)}^p\leq C(G,p,n)K_2(\delta,d)^{\frac{pn}{2}}. 
\end{align}
\end{theorem}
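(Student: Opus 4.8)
The plan is to establish \eqref{e-13-cons} by induction on $n$, running the same scheme used for Theorem \ref{div-speed} in the nonconservative case but with two changes: the nonconservative maximal regularity bound (Lemma \ref{maximalLp-noncons}) is replaced throughout by its conservative analogue (Lemma \ref{maximalLp-cons}), and the pointwise quantity $\sup_{t\in[0,T],x\in\mathbb{T}^d}\mathbb{E}|\cdot|^p$ is replaced everywhere by the space-time moment $\mathbb{E}\|\cdot\|_{L^p([0,T]\times\mathbb{T}^d)}^p$. For the base case $n=1$ I would observe that $\bar{u}^{1,\delta}$ is the stochastic convolution $\int_0^t\langle\nabla_x p(t-s,x-\cdot),G(\bar{u}^{0,\delta}(s,\cdot))\,dW^\delta(s)\rangle$, that is, of the form \eqref{sto-convolution-4} with $\varepsilon=1$ and coefficient $g=G(\bar{u}^{0,\delta})$. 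Since $\bar{u}^{0,\delta}$ solves the heat equation with $L^\infty$ initial data it is bounded, so Hypothesis H2 gives $\mathbb{E}\|g\|_{L^p([0,T]\times\mathbb{T}^d)}^p\le C(G,p)$, and Lemma \ref{maximalLp-cons} yields $\mathbb{E}\|\bar{u}^{1,\delta}\|_{L^p([0,T]\times\mathbb{T}^d)}^p\le C(G,p)\,K_2(\delta,d)^{p/2}$ for every $p\in[1,\infty)$; this also records that the defining stochastic integral is well-posed.

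For the inductive step I would assume \eqref{e-13-cons} for $1,\dots,k-1$ and all $p\in[1,\infty)$, write $\bar{u}^{k,\delta}$ in mild form \eqref{mildcoe-cons}, and apply Lemma \ref{maximalLp-cons} with $g=\sum_{l=1}^{k-1}\frac{1}{l!}G^{(l)}(\bar{u}^{0,\delta})\,\mathcal{J}^\delta(k,l)$, reducing the claim to the bound $\mathbb{E}\|g\|_{L^p([0,T]\times\mathbb{T}^d)}^p\le C(G,p,k)\,K_2(\delta,d)^{p(k-1)/2}$. Using $(a_1+\dots+a_m)^p\le C(p,m)\sum_i|a_i|^p$, the boundedness of the $G^{(l)}$, and the definition \eqref{Jkl} of $\mathcal{J}^\delta(k,l)$, it then suffices to bound, for each $l$ and each $(q_1,\dots,q_{k-l})\in\Lambda(k,l)$, the quantity $\mathbb{E}\int_0^T\int_{\mathbb{T}^d}\prod_{1\le i\le k-l}|\bar{u}^{i,\delta}(t,x)|^{pq_i}\,dx\,dt$. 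Applying Hölder's inequality on $\Omega\times[0,T]\times\mathbb{T}^d$ with exponents $(p_i)$ over the indices with $q_i\ge1$ satisfying $\sum_i 1/p_i=1$, each factor becomes $\big(\mathbb{E}\|\bar{u}^{i,\delta}\|_{L^{pq_ip_i}([0,T]\times\mathbb{T}^d)}^{pq_ip_i}\big)^{1/p_i}$, which by the induction hypothesis is $\le C\,K_2(\delta,d)^{pq_ii/2}$; multiplying over $i$ and using $\sum_i iq_i=k-1$, which holds by definition of $\Lambda(k,l)$, produces exactly $K_2(\delta,d)^{p(k-1)/2}$. Combined with the prefactor $K_2(\delta,d)^{p/2}$ from Lemma \ref{maximalLp-cons} this gives \eqref{e-13-cons} for $n=k$; the range $p\in[1,2)$ can, if one wishes, be deduced from $p=2$ by Hölder exactly as in Theorem \ref{div-speed}.

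The main obstacle is the same as in the nonconservative case: keeping the combinatorics under control, i.e.\ verifying that the exponents generated by the iterated Hölder estimates over the (finite) index set $\Lambda(k,l)$ add up, through the identity $\sum_i iq_i=k-1$, to precisely the power $p(k-1)/2$ of $K_2(\delta,d)$, uniformly in $l$ and in the solutions of $\Lambda(k,l)$, and simultaneously that every stochastic integral encountered is well-defined, which is guaranteed by the moment estimates supplied by the induction hypothesis. A feature specific to the conservative setting --- one that in fact simplifies the argument relative to Theorem \ref{div-speed} --- is that the gradient in \eqref{mildcoe-cons} is absorbed entirely by the heat semigroup through the generalized Littlewood--Paley inequality (Lemma \ref{gen-littlewoodpaley}) underlying Lemma \ref{maximalLp-cons}; hence no interaction between the heat kernel and the $\delta$-singularity occurs, and the divergence rate is driven purely by $\|\eta_\delta\|_{L^2}^2\sim\delta^{-d}=K_2(\delta,d)$.
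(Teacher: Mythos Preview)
Your proposal is correct and follows essentially the same route as the paper's proof: induction on $n$, with the base case handled by Lemma~\ref{maximalLp-cons} applied to $g=G(\bar{u}^{0,\delta})$, and the inductive step obtained by applying Lemma~\ref{maximalLp-cons} to the mild form \eqref{mildcoe-cons}, expanding $\mathcal{J}^\delta(k,l)$, and using H\"older's inequality on $\Omega\times[0,T]\times\mathbb{T}^d$ together with the constraint $\sum_i iq_i=k-1$ from $\Lambda(k,l)$. Your closing remark about the gradient being absorbed by the Littlewood--Paley estimate is also consistent with the paper's discussion of why the conservative case admits this $L^p$-framework.
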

\begin{proof}
We argue by induction.

{\bf Step 1}. For $n=1$, and $\delta>0$, $\bar{u}^{1,\delta}$ is written as
\begin{equation*}
\bar{u}^{1,\delta}(t)=\int_0^t\nabla S(t-s)G(\bar{u}^{0,\delta}(s))dW^{\delta}(s). 
\end{equation*}
For every $p\in[1,\infty)$, thanks to Lemma \ref{gen-littlewoodpaley} and Lemma \ref{maximalLp-cons}, it follows from Hypothesis H2 that 
\begin{align*}
&\mathbb{E}\|\bar{u}^{1,\delta}\|_{L^p([0,T]\times\mathbb{T}^d)}^p\leq C(G,p)K_2(\delta,d)^{\frac{p}{2}}.
\end{align*}
This proves (\ref{e-13-cons}) for $n=1$, $p\in(2,\infty)$. For $p\in[1,2]$, H\"older's inequality implies (\ref{e-13-cons}) for $n=1$.

{\bf Step 2}. For $k\in\mathbb{N}_+$, by induction hypothesis, assume that (\ref{e-13-cons}) holds for $n=1,\dots,k-1$. We aim to prove (\ref{e-13-cons}) for $n=k$. The mild solution $\bar{u}^{k,\delta}$ satisfies (\ref{mildcoe-cons}). Using Lemma \ref{maximalLp-cons} again to see that for every $p\in(2,\infty)$,  
\begin{align*}
\mathbb{E}\|\bar{u}^{k,\delta}\|_{L^p([0,T]\times\mathbb{T}^d)}^p\leq&C(G,p)K_2(\delta,d)^{\frac{p}{2}}\int_{[0,T]\times\mathbb{T}^d}\mathbb{E}\Big(\sum_{l=1}^{k-1}\frac{1}{l!}|\mathcal{J}^{\delta}(k,l)(s,y)|\Big)^pdyds. 
\end{align*}
More precisely, applying H\"older's inequality with $\int_{\Omega}d\mathbb{P}$ replaced by $\int_0^T\int_{\mathbb{T}^d}\int_{\Omega}d\mathbb{P}dxdt$, we have 
\begin{align*}
	&\int_{[0,T]\times\mathbb{T}^d}\mathbb{E}\Big(\sum_{l=1}^{k-1}\frac{1}{l!}|\mathcal{J}^{\delta}(k,l)(s,y)|\Big)^pdyds\\
	\leq&C(p,n)\sum_{l=1}^{k-1}\frac{1}{|l!|^p}\sum_{(q_1,\dots,q_{k-l})\in\Lambda(k,l)}(\frac{l!}{q_1!\dots q_{k-l}!})^p\Big\|\prod_{1\leq j\leq k-l}(\bar{u}^{j,\delta})^{q_j}\Big\|_{L^p(\Omega\times[0,T]\times\mathbb{T}^d)}^p\\
\leq&C(p,n)\sum_{l=1}^{k-1}\frac{1}{|l!|^p}\sum_{(q_1,\dots,q_{k-l})\in\Lambda(k,l)}(\frac{l!}{q_1!\dots q_{k-l}!})^p\prod_{1\leq j\leq k-l}\Big\|\bar{u}^{j,\delta}\Big\|_{L^{pq_jp_j}(\Omega\times[0,T]\times\mathbb{T}^d)}^{pq_j}\\
\leq&C(G,p,n)\sum_{l=1}^{k-1}\frac{1}{|l!|^p}\sum_{(q_1,\dots,q_{k-l})\in\Lambda(k,l)}(\frac{l!}{q_1!\dots q_{k-l}!})^p\prod_{1\leq j\leq k-l}\Big(K_2(\delta,d)^{\frac{jq_jp_jp}{2}}\Big)^{\frac{1}{p_j}}\\
=&C(G,p,n)\sum_{l=1}^{k-1}\frac{1}{|l!|^p}\sum_{(q_1,\dots,q_{k-l})\in\Lambda(k,l)}(\frac{l!}{q_1!\dots q_{k-l}!})^pK_2(\delta,d)^{\sum_{1\leq j\leq k-l}\frac{jq_jp}{2}}.\end{align*}
Therefore we establish (\ref{e-13-cons}) for $n=k$ and $p\in[1,\infty)$. This completes the proof.  
\end{proof}

\section{Higher order fluctuations for smooth coefficients}\label{sec-6}
In this section, we prove Theorem \ref{main-1} and Theorem \ref{main-2}. In this section, we always assume that the assumptions of Theorem \ref{main-1} and Theorem \ref{main-2} are satisfied, and, in the conservative case,  
\begin{align}\label{varepsilonsmall}
	\varepsilon < \varepsilon_0 = \varepsilon_0(\delta,G_0),
\end{align}
where $\varepsilon_0$ is as in Lemma \ref{variationwellposed-2}. Let $u^{\varepsilon,\delta}$ be the mild solution of (\ref{SHE02}) and (\ref{SHE01}), respectively, and let $\bar{u}^{n,\delta}$, $n \in \mathbb{N}$, be the mild solution of (\ref{coe}) and (\ref{ccoe}), respectively. Set
\begin{align}\label{wnn}
w^{\varepsilon,\delta}_n=\varepsilon^{-\frac{n}{2}}\Big(u^{\varepsilon,\delta}-\sum_{i=0}^{n}\varepsilon^{\frac{i}{2}}\bar{u}^{i,\delta}\Big).
\end{align}

The aim is to prove that for every $n\in\mathbb{N}$, $w^{\varepsilon,\delta}_n$ converges to zero in a suitable space, provided a suitable relative scaling of $(\varepsilon,\delta(\varepsilon))$. 

\subsection{Expression of the remainder $w_n^{\varepsilon,\delta}$}

\begin{lemma}\label{sigman}
Let $\varepsilon, \delta>0$. For every $n\in\mathbb{N}$, let $w^{\varepsilon,\delta}_n$  be defined by (\ref{wnn}). Then 
\item{(i)} {\bf Non-conservative case:}  
$w^{\varepsilon,\delta}_n$ is a mild solution of 
 \begin{equation}\label{wn-1}
dw^{\varepsilon,\delta}_n=\Delta w^{\varepsilon,\delta}_ndt+\sigma^{\varepsilon,\delta}_n(t)dW^{\delta}(t),\ \ w^{\varepsilon,\delta}_n(0)=0.  
\end{equation}
\item{(ii)} {\bf Conservative case:}  
$w^{\varepsilon,\delta}_n$ is a mild solution of 
 \begin{equation}\label{wn-1-cons}
dw^{\varepsilon,\delta}_n=\Delta w^{\varepsilon,\delta}_ndt+\nabla\cdot(\sigma^{\varepsilon,\delta}_n(t)dW^{\delta}(t)),\ \ w^{\varepsilon,\delta}_n(0)=0.  
\end{equation}
Here, the diffusion coefficients $\sigma^{\varepsilon,\delta}_n, n\geq 0$ are given by
\begin{align}\label{eq-2}
\sigma^{\varepsilon,\delta}_n(\cdot)=&\varepsilon^{-\frac{n-1}{2}}\Big(G(u^{\varepsilon,\delta})-\Big[\sum_{m=1}^{n}\varepsilon^{\frac{m-1}{2}}\Big(\sum_{l=0}^{m-1}\frac{1}{l!}G^{(l)}(\bar{u}^{0,\delta})\mathcal{J}^{\delta}(m,l)\Big)\Big]\Big),\ n\geq1, 
\end{align}
where $\mathcal{J}^{\delta}(k,l)$ is given by (\ref{Jkl}). Here we make a convention where the summation $\sum_{m=1}^0$ is always zero, regardless of the objects being summed.
\end{lemma}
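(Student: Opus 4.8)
The plan is to obtain the claimed equation by direct substitution into the mild formulations and exploiting the linearity of the stochastic convolution. First I would record the relevant mild representations: by \eqref{mild-0} the non-conservative solution satisfies $u^{\varepsilon,\delta}(t)=S(t)u_0+\varepsilon^{\frac12}\int_0^t S(t-s)G(u^{\varepsilon,\delta}(s))\,dW^{\delta}(s)$, by \eqref{HE} one has $\bar u^{0,\delta}(t)=S(t)u_0$, and by \eqref{mildcoe}, for $k\geq1$,
\[
\bar u^{k,\delta}(t)=\int_0^t S(t-s)\Big[\sum_{l=0}^{k-1}\tfrac1{l!}G^{(l)}(\bar u^{0,\delta}(s))\,\mathcal{J}^{\delta}(k,l)(s)\Big]dW^{\delta}(s),
\]
where the inner sum may be started at $l=0$ because $\mathcal{J}^{\delta}(k,0)=0$. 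In the conservative case the same representations hold with the gradient form of the stochastic convolution from \eqref{mild-0-cons}, \eqref{mildcoe-cons} in place of $S(t-s)$; since all manipulations below are linear in this kernel, the two cases are treated simultaneously.

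Next I would form the linear combination \eqref{wnn}. The deterministic contributions $S(t)u_0$ appear only through $u^{\varepsilon,\delta}$ and $\bar u^{0,\delta}$ and hence cancel, while $\bar u^{k,\delta}(0)=0$ for $k\geq1$ gives $w^{\varepsilon,\delta}_n(0)=0$. Using linearity to collapse the finite sum of stochastic convolutions into a single one yields, in the non-conservative case,
\[
w^{\varepsilon,\delta}_n(t)=\varepsilon^{-\frac n2}\int_0^t S(t-s)\Big(\varepsilon^{\frac12}G(u^{\varepsilon,\delta}(s))-\sum_{m=1}^{n}\varepsilon^{\frac m2}\sum_{l=0}^{m-1}\tfrac1{l!}G^{(l)}(\bar u^{0,\delta}(s))\,\mathcal{J}^{\delta}(m,l)(s)\Big)dW^{\delta}(s).
\]
Pulling the common factor $\varepsilon^{\frac12}$ out of the integrand and relabelling the summation index identifies the integrand with $\sigma^{\varepsilon,\delta}_n(s)$ from \eqref{eq-2}; the convention $\sum_{m=1}^{0}=0$ covers $n=0$, for which $\sigma^{\varepsilon,\delta}_0=\varepsilon^{\frac12}G(u^{\varepsilon,\delta})$ and the identity is immediate from $w^{\varepsilon,\delta}_0=u^{\varepsilon,\delta}-\bar u^{0,\delta}$. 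This is precisely the mild form of \eqref{wn-1}; inserting the gradient kernel gives the mild form of \eqref{wn-1-cons}.

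Finally I would check that $\sigma^{\varepsilon,\delta}_n$ is an admissible integrand, so that \eqref{wn-1}, \eqref{wn-1-cons} and all the stochastic integrals above make sense. Progressive measurability is inherited from $u^{\varepsilon,\delta}$ and the $\bar u^{i,\delta}$, $1\le i\le n$. For integrability, the standing boundedness of $G,G^{(1)},\dots,G^{(n)}$ makes $G(u^{\varepsilon,\delta})$ bounded, while each $\mathcal{J}^{\delta}(m,l)$ is a finite sum of monomials in $\bar u^{1,\delta},\dots,\bar u^{m-1,\delta}$ whose moments are controlled by Theorem \ref{div-speed} (resp.\ Theorem \ref{div-speed-cons}); Hölder's inequality then gives $\sigma^{\varepsilon,\delta}_n\in L^p(\Omega\times[0,T]\times\mathbb{T}^d)$ for every $p\in[1,\infty)$, which in particular provides the square integrability needed for the stochastic integrals to be well defined. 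The only delicate point in this lemma is purely bookkeeping — tracking the powers of $\varepsilon$ and the index shift $i\mapsto m$ when collapsing the sum — so there is no genuine analytic obstacle; the statement merely records the exact linear equation solved by the remainder $w^{\varepsilon,\delta}_n$, which the subsequent sections then estimate.
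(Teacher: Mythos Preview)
Your proof is correct. It differs from the paper's argument in style: the paper proves the formula \eqref{eq-2} by induction on $n$, using the one-step recursion $w^{\varepsilon,\delta}_n=\varepsilon^{-1/2}w^{\varepsilon,\delta}_{n-1}-\bar u^{n,\delta}$ to derive the recursion $\sigma^{\varepsilon,\delta}_n=\varepsilon^{-1/2}\sigma^{\varepsilon,\delta}_{n-1}-\sum_{l=1}^{n-1}\tfrac1{l!}G^{(l)}(\bar u^{0,\delta})\mathcal{J}^{\delta}(n,l)$ and then summing it; you instead substitute all the mild representations simultaneously into \eqref{wnn} and read off the integrand in one step. Your direct route is shorter and makes the cancellation of the deterministic parts and the identification of $\sigma^{\varepsilon,\delta}_n$ completely transparent; the paper's inductive argument has the minor advantage of exhibiting the recursive structure of the $\sigma^{\varepsilon,\delta}_n$, but that structure is not used elsewhere. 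Your additional remark verifying progressive measurability and $L^p$-integrability of $\sigma^{\varepsilon,\delta}_n$ (via boundedness of $G$ and Theorems \ref{div-speed}, \ref{div-speed-cons}) is a welcome clarification that the paper leaves implicit.
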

\begin{proof}
The result will be proved by induction. Since $\mathcal{J}^{\delta}(1,0)=1$, it is obvious that  
\begin{align*}
	\sigma^{\varepsilon,\delta}_1(\cdot)=G(u^{\varepsilon,\delta})-G(\bar{u}^{0,\delta}), 
\end{align*}
which satisfies (\ref{eq-2}) with $n=1$.

In the following, for any $n\geq2$, we assume that ({\ref{eq-2}}) holds for $n-1$, and aim to prove that ({\ref{eq-2}}) holds for $n$. By the definition of $w_n^{\varepsilon,\delta}$ in (\ref{wnn}), by a direct calculation, we find that
\begin{align*}
w^{\varepsilon,\delta}_n=&\varepsilon^{-\frac{n}{2}}\Big(u^{\varepsilon,\delta}-\sum_{i=0}^{n-1}\varepsilon^{\frac{i}{2}}\bar{u}^{i,\delta}-\varepsilon^{\frac{n}{2}}\bar{u}^{n,\delta}\Big)\\
=&\varepsilon^{-\frac{1}{2}}w^{\varepsilon,\delta}_{n-1}-\bar{u}^{n,\delta}. 
\end{align*}
As a consequence, the coefficients $\sigma_n^{\varepsilon,\delta}(\cdot)$ in  (\ref{wn-1}) can be derived recursively, 
\begin{equation*}
\sigma^{\varepsilon,\delta}_n(\cdot)=\frac{\sigma^{\varepsilon,\delta}_{n-1}(\cdot)}{\varepsilon^{\frac{1}{2}}}-\Big[\sum_{l=1}^{n-1}\frac{1}{l!}G^{(l)}(\bar{u}^{0,\delta})\mathcal{J}^{\delta}(n,l)\Big].
\end{equation*}
By the induction hypothesis that ({\ref{eq-2}}) holds for $n-1$, it follows that
\begin{align*}
\sigma^{\varepsilon,\delta}_n(\cdot)=&\frac{1}{\varepsilon^{\frac{1}{2}}}\Big\{\varepsilon^{-\frac{n-1-1}{2}}\Big(G(u^{\varepsilon,\delta})-\Big[\sum_{m=1}^{n-1}\varepsilon^{\frac{m-1}{2}}(\sum_{l=1}^{m-1}\frac{1}{l!}G^{(l)}(\bar{u}^{0,\delta})\mathcal{J}^{\delta}(m,l))\Big]\Big)\Big\}\\
&-\Big[\sum_{l=1}^{n-1}\frac{1}{l!}G^{(l)}(\bar{u}^{0,\delta})\mathcal{J}^{\delta}(n,l)\Big]\\
=&\varepsilon^{-\frac{n-1}{2}}\Big\{G(u^{\varepsilon,\delta})-\Big[\sum_{m=1}^{n-1}\varepsilon^{\frac{m-1}{2}}(\sum_{l=1}^{m-1}\frac{1}{l!}G^{(l)}(\bar{u}^{0,\delta})\mathcal{J}^{\delta}(m,l))\Big]\Big\}\\
&-\Big[\sum_{l=1}^{n-1}\frac{1}{l!}G^{(l)}(\bar{u}^{0,\delta})\mathcal{J}^{\delta}(n,l)\Big]\\
=&\varepsilon^{-\frac{n-1}{2}}\Big\{G(u^{\varepsilon,\delta})-\Big[\sum_{m=1}^{n}\varepsilon^{\frac{m-1}{2}}(\sum_{l=1}^{m-1}\frac{1}{l!}G^{(l)}(\bar{u}^{0,\delta})\mathcal{J}^{\delta}(m,l))\Big]\Big\}.
\end{align*}
Thus, ({\ref{eq-2}}) holds for $n$ and induction completes the proof.
\end{proof}

\subsection{Proofs of Theorem \ref{main-1} and \ref{main-2}}
In this section, we establish uniform bounds on the remainder terms $w^{\varepsilon,\delta}_n$ defined by (\ref{wnn}) and thereby prove Theorem \ref{main-1} and \ref{main-2}. In order to prove the convergence of the remainder terms, we require a priori estimates for the mild solutions of  (\ref{SHE02}). The following lemmas provide $L^p$-estimates for $u^{\varepsilon,\delta}$ in  (\ref{mild-1}) and (\ref{mild-1-cons}).
\begin{lemma}\label{Lp}
Assume that $G$ and $u_0$ satisfy Hypothesis H1 and H2. Let $\varepsilon,\delta>0$, $d\geq1$, and let $u^{\varepsilon,\delta}$ be the mild solution of (\ref{SHE02}). Then for every $p\in[1,+\infty)$, there exist constants $C_1=C_1(u_0,p)$ and $C_2=C_2(G,p)$, such that 
\begin{align}\label{eq-22}
\sup_{t\in[0,T],x\in\mathbb{T}^d}\mathbb{E}|u^{\varepsilon,\delta}(t,x)|^p\lesssim C_1+C_2\cdot(\varepsilon K_1(\delta,d))^{\frac{p}{2}}.
\end{align}
\end{lemma}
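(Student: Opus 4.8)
The plan is to prove \eqref{eq-22} by a standard mild-solution a priori bound, using the maximal $L^p$-regularity of the stochastic convolution already established in Lemma \ref{maximalLp-noncons}. Recall the mild formulation \eqref{mild-1}:
\[
u^{\varepsilon,\delta}(t,x)=(p(t)\ast u_0)(x)+\varepsilon^{\frac{1}{2}}\int_0^t\langle p(t-s,x-\cdot),G(u^{\varepsilon,\delta}(s,\cdot))dW^{\delta}(s)\rangle.
\]
First I would treat the deterministic term: since $u_0\in L^\infty(\mathbb{T}^d)$ and $p(t)$ is a probability kernel, $\|p(t)\ast u_0\|_{L^\infty}\le\|u_0\|_{L^\infty}$, so $\sup_{t,x}|p(t)\ast u_0(x)|^p\le\|u_0\|_{L^\infty}^p=:C_1(u_0,p)$. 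For the stochastic term, I would apply (the proof of) Lemma \ref{maximalLp-noncons} with $g=G(u^{\varepsilon,\delta})$, giving for every $t\in[0,T]$, $x\in\mathbb{T}^d$,
\[
\varepsilon^{\frac p2}\mathbb{E}\Big|\int_0^t\langle p(t-s,x-\cdot),G(u^{\varepsilon,\delta}(s,\cdot))dW^{\delta}(s)\rangle\Big|^p\le C(p)(\varepsilon K_1(\delta,d))^{\frac p2}\sup_{s\in[0,T],y\in\mathbb{T}^d}\mathbb{E}|G(u^{\varepsilon,\delta}(s,y))|^p.
\]
Since $G\in C_b(\mathbb{R})$ under Hypothesis H2, $\sup|G|\le\|G\|_{L^\infty}$, so the right-hand side is bounded by $C_2(G,p)(\varepsilon K_1(\delta,d))^{\frac p2}$ with $C_2(G,p)=C(p)\|G\|_{L^\infty}^p$. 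Combining the two contributions via $|a+b|^p\lesssim_p|a|^p+|b|^p$ yields \eqref{eq-22}.

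The point worth noting is that there is \emph{no fixed-point/Gronwall obstacle here}: because $G$ is bounded, the a priori estimate closes immediately without needing to bound $u^{\varepsilon,\delta}$ in terms of itself. (Had $G$ been merely of linear growth, one would instead insert $\mathbb{E}|G(u^{\varepsilon,\delta}(s,y))|^p\lesssim 1+\mathbb{E}|u^{\varepsilon,\delta}(s,y)|^p$, take the supremum over $s\le t$ on both sides, and—since the kernel estimate in Lemma \ref{maximalLp-noncons} already carries the small factor $\varepsilon K_1$—either absorb the term for $\varepsilon$ small or run a time-splitting Gronwall argument; but under Hypothesis H2 this is unnecessary.) The only mild subtlety is the case $p\in[1,2)$: Lemma \ref{maximalLp-noncons} is stated for all $p\in[1,\infty)$, but if one only has the $p\ge2$ version at hand, Jensen's inequality $\mathbb{E}|X|^p\le(\mathbb{E}|X|^2)^{p/2}$ reduces it to $p=2$, and since $\varepsilon K_1(\delta,d)\le$ const on the relevant range (or simply $(\varepsilon K_1)^{1}\le(\varepsilon K_1)^{p/2}$ up to constants when $\varepsilon K_1$ is bounded), the stated form of the bound persists.

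So the main "obstacle" is really just bookkeeping: confirming that the constants $C_1,C_2$ depend only on $(u_0,p)$ and $(G,p)$ respectively and not on $\varepsilon,\delta$, which follows since all bounds on $G$ are through $\|G\|_{L^\infty(\mathbb{R})}$ and the kernel constant $C(p)$ in Lemma \ref{maximalLp-noncons} is $\delta$- and $\varepsilon$-independent. I would therefore write the proof in two short displayed steps (deterministic part, stochastic part via Lemma \ref{maximalLp-noncons}), then conclude.
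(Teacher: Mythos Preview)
Your proposal is correct and follows essentially the same route as the paper: split the mild formulation into the deterministic convolution (bounded by $\|u_0\|_{L^\infty}^p$) and the stochastic convolution, then control the latter via the estimate of Lemma~\ref{maximalLp-noncons} together with the boundedness of $G$, and finally pass to $p\in[1,2)$ by H\"older/Jensen. The paper merely re-derives the Lemma~\ref{maximalLp-noncons} bound inline rather than quoting it, but the argument is the same.
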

\begin{proof}
For any $p\in[2,\infty)$, by the mild form of $u^{\varepsilon,\delta}$, the $L^p$-isometry of the stochastic integral \cite[Corollary 3.11]{NW}, and the definition of (\ref{innerproduct}), it follows that for every $(t,x)\in[0,T]\times\mathbb{T}^d$, 
\begin{align*}
\mathbb{E}|u^{\varepsilon,\delta}(t,x)|^p\leq C(p)\mathbb{E}|(p(t)\ast u_0)(x)|^p+C(p)\varepsilon^{\frac{p}{2}}\mathbb{E}\Big[\Big|\int_0^{t}\|p(t-s,x-\cdot)G(u^{\varepsilon,\delta}(s,\cdot))\|_{\delta}^2ds\Big|^{\frac{p}{2}}\Big].
\end{align*}
Following the proof of Lemma \ref{maximalLp-noncons}, with the aid of Minkowski's inequality and Young's convolution inequality, and thanks to the boundedness of $G$, we find that 
\begin{align*}
&\mathbb{E}|u^{\varepsilon,\delta}(t,x)|^p\leq\|u_0\|_{L^{\infty}(\mathbb{T}^d)}^p\\
&+C(p)\varepsilon^{\frac{p}{2}}\Big|\int_0^{t}\int_{\mathbb{T}^{2d}}|p(t-s,x-y_1)p(t-s,x-y_2)|(\mathbb{E}|G(u^{\varepsilon,\delta}(s,y_1))G(u^{\varepsilon,\delta}(s,y_2))|^{\frac{p}{2}})^{\frac{2}{p}}\\
&\cdot R_{\delta}(y_1-y_2)dy_1dy_2ds\Big|^{\frac{p}{2}}\\
\leq&\|u_0\|_{L^{\infty}(\mathbb{T}^d)}^p+C(G,p)(\varepsilon K_{\delta}(T))^{\frac{p}{2}},
\end{align*}
where $K_{\delta}(T)$ is defined by (\ref{K2i}). Let $K_1(\delta,d)$ as in (\ref{Kdelta}), combining with (\ref{K2i}) and (\ref{divkernel}) we obtain  
\begin{equation*}
\sup_{t\in[0,T],x\in\mathbb{T}^d}\mathbb{E}|u^{\varepsilon,\delta}(t,x)|^p\lesssim\|u_0\|_{L^{\infty}(\mathbb{T}^d)}^p+C(G,p)(\varepsilon K_1(\delta,d))^{\frac{p}{2}},
\end{equation*}
which concludes the proof for $p\in[2,\infty)$. When $p\in[1,2)$, similar to the proof of Lemma \ref{maximalLp-noncons}, we complete the proof by using H\"older's inequality.  
\end{proof}

Regarding the case of conservative noise, we have the following analogue. 
\begin{lemma}
	Assume that $G$ and $u_0$ satisfy Hypothesis H1 and H2. Let  $d\geq1$, $\delta\in(0,1)$, and let $\varepsilon_0$ be as in Lemma \ref{variationwellposed-2}. For every $\varepsilon\in(0,\varepsilon_0)$, let $u^{\varepsilon,\delta}$ be the mild solution of (\ref{SHE01}). Then for every $p\in[1,+\infty)$, there exist constants $C_1=C_1(u_0,p)$ and $C_2=C_2(G,p)$, such that 
\begin{align}\label{eq-22-cons}
\mathbb{E}\|u^{\varepsilon,\delta}\|_{L^p([0,T]\times\mathbb{T}^d)}^p\lesssim C_1+C_2\cdot(\varepsilon K_2(\delta,d))^{\frac{p}{2}}.
\end{align}
\end{lemma}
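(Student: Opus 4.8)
The plan is to mirror the proof of Lemma \ref{Lp} almost verbatim, replacing the pointwise estimates by the $L^p([0,T]\times\mathbb{T}^d)$-estimates established in Subsection \ref{subsec-4-2}. First I would fix $p\in[2,\infty)$ and start from the mild formula \eqref{mild-1-cons}, splitting $u^{\varepsilon,\delta}$ into the deterministic part $p(t)\ast u_0$ and the stochastic convolution
\begin{align*}
v^{\varepsilon,\delta}(t,x)=\varepsilon^{\frac{1}{2}}\int_0^t\langle\nabla_x p(t-s,x-\cdot),G(u^{\varepsilon,\delta}(s,\cdot))dW^{\delta}(s)\rangle.
\end{align*}
Using the $p$-inequality $(a+b)^p\leq C(p)(a^p+b^p)$ reduces the problem to estimating these two terms separately.

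For the deterministic term, the contraction property of the heat semigroup on $L^p$ gives
\begin{align*}
\|p(\cdot)\ast u_0\|_{L^p([0,T]\times\mathbb{T}^d)}^p\leq T\|u_0\|_{L^p(\mathbb{T}^d)}^p\leq T\|u_0\|_{L^\infty(\mathbb{T}^d)}^p=:C_1(u_0,p),
\end{align*}
which is harmless since $u_0\in L^\infty(\mathbb{T}^d)$ by Hypothesis H1. For the stochastic term, the key point is that $v^{\varepsilon,\delta}$ is exactly the stochastic convolution \eqref{sto-convolution-4} with $g(s)=G(u^{\varepsilon,\delta}(s,\cdot))$, so Lemma \ref{maximalLp-cons} applies directly and yields
\begin{align*}
\mathbb{E}\|v^{\varepsilon,\delta}\|_{L^p([0,T]\times\mathbb{T}^d)}^p\leq C(p)(\varepsilon K_2(\delta,d))^{\frac{p}{2}}\mathbb{E}\|G(u^{\varepsilon,\delta})\|_{L^p([0,T]\times\mathbb{T}^d)}^p.
\end{align*}
By the boundedness of $G$ from Hypothesis H2, $\|G(u^{\varepsilon,\delta})\|_{L^p([0,T]\times\mathbb{T}^d)}^p\leq T\|G\|_{L^\infty(\mathbb{R})}^p$, so this last factor is a constant $C_2=C_2(G,p)$ (up to the fixed $T$), giving the bound $C_2(\varepsilon K_2(\delta,d))^{p/2}$. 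Combining the two contributions proves \eqref{eq-22-cons} for $p\in[2,\infty)$. For $p\in[1,2)$ one concludes by Hölder's (Jensen's) inequality on the probability space together with the finite-measure space $[0,T]\times\mathbb{T}^d$, exactly as at the end of the proofs of Lemma \ref{maximalLp-noncons} and Lemma \ref{Lp}.

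I do not expect a genuine obstacle here: unlike the a priori estimate for \eqref{SHE02}, the bound on $\|G(u^{\varepsilon,\delta})\|_{L^p}$ is immediate from boundedness of $G$, so there is no need for a Gronwall-type or fixed-point argument — the estimate closes without self-reference. The only mild subtlety is bookkeeping the restriction $\varepsilon\in(0,\varepsilon_0)$, which is needed only to guarantee that $u^{\varepsilon,\delta}$ exists (via Lemma \ref{variationwellposed}) and satisfies the mild form \eqref{mild-1-cons}; once existence is granted, the estimate itself uses only Lemma \ref{maximalLp-cons} and the boundedness of $G$, and in particular holds for every such $\varepsilon$.
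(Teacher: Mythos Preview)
Your proposal is correct and takes essentially the same approach as the paper: split via the mild form into the deterministic heat flow and the stochastic convolution, apply Lemma~\ref{maximalLp-cons} to the latter, use the boundedness of $G$ to control the coefficient, and finish for $p\in[1,2)$ by H\"older. If anything, your version is more carefully written---the paper's display~\eqref{Lpestimateforu-0} omits the $C_1$ contribution from $p(t)\ast u_0$ and only restores it in~\eqref{Lpestimateforu}, whereas you handle the two pieces explicitly from the start.
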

\begin{proof}
	Let $p\in(2,\infty)$. For simplicity, we denote by $G(s,y):=G(u^{\varepsilon,\delta}(s,y))$, $G_{\delta}(s,y,\cdot):=G(s,y)\eta_{\delta}(y-\cdot)$. Thanks to Lemma \ref{lem-moser-1}, the $L^p(\Omega;L^p([0,T];L^p(\mathbb{T}^d)))$-norm of $u^{\varepsilon,\delta}$ is finite. With the help of Lemma \ref{gen-littlewoodpaley} and Lemma \ref{maximalLp-cons}, it follows from the hypothesis of $G$ that 
\begin{align}\label{Lpestimateforu-0}
\mathbb{E}\|u^{\varepsilon,\delta}\|_{L^p([0,T]\times\mathbb{T}^d)}^p\leq C(G,p)\varepsilon^{\frac{p}{2}}K_2(\delta,d)^{\frac{p}{2}}.
\end{align}
Let $c>0$, by the assumption $\varepsilon^{\frac{p}{2}}K_2(\delta,d)^{\frac{p}{2}}\leq c$, we have 
\begin{align}\label{Lpestimateforu}
	\mathbb{E}\|u^{\varepsilon,\delta}\|_{L^p([0,T]\times\mathbb{T}^d)}^p\leq C(u_0,p)+C(G,p)c^{\frac{p}{2}}. 
\end{align}
Applying H\"older's inequality, (\ref{eq-22-cons}) holds for all $p\in[1,\infty)$ as well. 

\end{proof}

The following lemma in number theory plays a key role in proving fluctuation expansions. For $k,l,m\in\mathbb{N}_+$, $k>l$, and $m\geq l+1$. Recall that $\Lambda(k,l)$ is defined by (\ref{inteq}). Let $\Lambda(k,l,m)$ be the set of all nonnegative integer solutions $(q_1, \dots, q_{k-l})$ satisfying 
\begin{eqnarray}
\left\{
  \begin{array}{ll}\label{inteq-1}
   &q_1+q_2+\dots+q_{k-l}=l,\\
   &q_1+2q_2+\dots+(k-l)q_{k-l}=m-1.
  \end{array}
\right.
\end{eqnarray}

Clearly, $\Lambda(k,l,k)=\Lambda(k,l)$. Moreover, when $m>(k-l)l+1$, $\Lambda(k,l,m)=\emptyset$. 

\begin{lemma}\label{number}
Let $k,l,m\in\mathbb{N}_+$, $k>l$, and $l+1\leq m\leq k$. 
\begin{description}
\item[(i)]For every $(q_1,q_2,...,q_{k-l})\in(\mathbb{N})^{k-l}$, let $P_{m-l}(q_1,q_2,...,q_{k-l})=(q_1,q_2,...,q_{m-l})$ be the projection onto first $m-l$ entries. Then for every $(q_1,...,q_{k-l})\in\Lambda(k,l,m)$, we have $P_{m-l}(q_1,...,q_{k-l})\in\Lambda(m,l)$, and when $l+1\leq m\leq k-1$, we have $q_j=0$, for $m-l+1\leq j\leq k-l$. 

\item[(ii)]Let $(q_1,...,q_{m-l})\in \Lambda(m,l)$, we define $(\bar{q}_1,...,\bar{q}_{m-l},...,\bar{q}_{k-l})$ as
 \begin{eqnarray}
\left\{
  \begin{array}{ll}\label{extension}
   \bar{q}_i=&q_i,\ \ 1\leq i\leq m-l\\
   \bar{q}_i=&0,\ \ m-l+1\leq i\leq k-l.
  \end{array}
\right.
\end{eqnarray}
Then we have $(\bar{q}_1,...,\bar{q}_{m-l},...,\bar{q}_{k-l})\in\Lambda(k,l,m)$. 
\end{description}
\end{lemma}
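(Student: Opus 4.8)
This is an elementary statement about the two linear Diophantine constraints defining $\Lambda(k,l,m)$ and $\Lambda(m,l)$, and I would handle the two parts in the order (ii) then (i), since (ii) is the easier direction.

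For part (ii), the plan is to verify the defining system \eqref{inteq-1} directly: padding a solution $(q_1,\dots,q_{m-l})\in\Lambda(m,l)$ with zeros as in \eqref{extension} changes neither $\sum_{i=1}^{k-l}\bar q_i$ nor $\sum_{i=1}^{k-l} i\bar q_i$, so these remain equal to $l$ and $m-1$ respectively, which is precisely membership in $\Lambda(k,l,m)$. The degenerate case $m=k$ (no padding takes place, and $\Lambda(k,l,k)=\Lambda(k,l)$) is then immediate.

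For part (i), the key idea is to subtract the first equation of \eqref{inteq-1} from the second, which for any $(q_1,\dots,q_{k-l})\in\Lambda(k,l,m)$ gives
\[
\sum_{i=2}^{k-l}(i-1)q_i=m-1-l,
\]
a sum of nonnegative integers. I would then argue by contradiction: if some $q_j$ with $m-l+1\le j\le k-l$ were positive, the single summand $(j-1)q_j$ would already be at least $j-1\ge m-l$, which strictly exceeds $m-1-l$ — impossible. Hence $q_j=0$ for all $j$ in that range, so truncating to the first $m-l$ coordinates via $P_{m-l}$ discards only zero entries. The truncated tuple therefore satisfies the same two sums, namely $\sum_{i\le m-l}q_i=l$ and $\sum_{i\le m-l} i q_i=m-1$, which is exactly the system \eqref{inteq} defining $\Lambda(m,l)$; this proves $P_{m-l}(q_1,\dots,q_{k-l})\in\Lambda(m,l)$, and along the way the vanishing claim for $m-l+1\le j\le k-l$.

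There is essentially no genuine obstacle here; the only points requiring a little care are the strict inequality $m-l>m-1-l$ used in the contradiction step (legitimate since all quantities involved are integers) and checking that the stated index range $m-l+1\le j\le k-l$ is empty precisely when $m=k$, so that the vanishing assertion and the projection assertion in (i) are mutually consistent. I would record these boundary observations explicitly.
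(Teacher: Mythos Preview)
Your proposal is correct and follows essentially the same approach as the paper. Both proofs verify (ii) by direct substitution and prove the vanishing claim in (i) by contradiction via the inequality $(j-1)q_j\ge m-l>m-1-l$; your formulation, which first subtracts the two defining equations to isolate $\sum_{i\ge 2}(i-1)q_i=m-1-l$, is a slightly cleaner way to arrive at this than the paper's minimization framing, but the underlying idea is identical.
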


\begin{proof} 
Let $k,l\in\mathbb{N}_+$ be fixed. In the case of $m=k$, $\Lambda(k,l)=\Lambda(k,l,m)$, the result is obvious. In the case of $l+1\leq m\leq k-1$, it suffices to prove that for every element $(q_1,\dots,q_{k-l})\in\Lambda(k,l,m)$, we have 
\begin{equation}\label{qj0}
q_j=0,\ \ \ \text{for}\ \ m-l+1\leq j\leq k-l.
\end{equation}

Let $(q_1,\dots,q_{k-l})\in\Lambda(k,l,m)$, assume that  $q_j\geq1$,
 for some $j\in[m-l+1,k-l]$, then the unique choice to reach the minimum of $ q_1+2q_2+\dots+(k-l)q_{k-l}$ in (\ref{inteq-1}) is that
\begin{equation*}
q_1=l-q_j,\ \ \  q_i=0,\ \ \ i\neq j,1.
\end{equation*}
As a result, we get
\begin{align*}
q_1+jq_j=l+(j-1)q_j\geq l+(m-l+1-1)=m>m-1,
\end{align*}
which leads to a contradiction to the fact that $(q_1,\dots,q_{k-l})\in\Lambda(k,l,m)$, this completes the proof of (i). 

For every $(q_1,...,q_{m-l})\in\Lambda(m,l)$, let $(\bar{q}_1,...,\bar{q}_{m-l},...,\bar{q}_{k-l})$ be defined by (\ref{extension}). We have that 
\begin{align*}
\sum_{i=1}^{k-l}\bar{q}_i=\sum_{i=1}^{m-l}\bar{q}_i+0=l,	
\end{align*}
and 
\begin{align*}
\sum_{i=1}^{k-l}i\bar{q}	_i=\sum_{i=1}^{m-l}i\bar{q}	_i+0=m-1.
\end{align*}
These imply that $(\bar{q}_1,...,\bar{q}_{m-l},...,\bar{q}_{k-l})\in\Lambda(k,l,m)$, this completes the proof of (ii).
 
\end{proof}

When $i=1$ and $d=1$, we take $\delta=0$ and denote by $w^{\varepsilon}_n=w^{\varepsilon,0}_n$. As before, $w^{\varepsilon,\delta}_n$ is the remainder of the $n$-th order small noise expansion. The next result provides estimates for $w^{\varepsilon,\delta}_n$.

\begin{theorem}\label{n-CLT}
Assume that $G$ and $u_0$ satisfy Hypothesis H1 and H2. Let $n\in\mathbb{N}_+$, $\varepsilon, \delta>0$. Let $K_i(\delta,d)$ be defined by (\ref{Kdelta}) for $i=1,2$. 
\item{(i)} {\bf Non-conservative noise.} Let $w^{\varepsilon,\delta}_n$ be defined by (\ref{wnn}). Assume that the scaling regime $(\varepsilon,\delta(\varepsilon))$ satisfies
\begin{equation}\label{sca-k}
\lim_{\varepsilon\rightarrow0}\varepsilon K_1(\delta(\varepsilon),d)^{n+1}=0.
\end{equation}
Then there exists $\varepsilon_0>0$, such that for all $\varepsilon\in(0,\varepsilon_0)$, for every $p\in[1,+\infty)$, there exists a constant $C=C(G,p,n)$, such that 
\begin{equation}\label{lln-1}
\sup_{t\in[0,T],x\in\mathbb{T}^d}\mathbb{E}|w^{\varepsilon,\delta(\varepsilon)}_n(t,x)|^p\leq C(\varepsilon K_1(\delta(\varepsilon),d)^{n+1})^{\frac{p}{2}}.
\end{equation}
\item{(ii)} {\bf Conservative noise.} Assume that 
\begin{equation}\label{sca-k-cons}
\lim_{\varepsilon\rightarrow0}\varepsilon K_2(\delta(\varepsilon),d)^{n+1}=0
\end{equation}
holds for the conservative case. Then there exists $\varepsilon_0>0$, such that for all $\varepsilon\in(0,\varepsilon_0)$, for every $p\in[1,+\infty)$, there exists a constant $C=C(G,p,n)$, such that 
\begin{equation}\label{lln-1-cons}
\mathbb{E}\|w^{\varepsilon,\delta(\varepsilon)}_n\|_{L^p([0,T]\times\mathbb{T}^d)}^p\leq C(\varepsilon K_2(\delta(\varepsilon),d)^{n+1})^{\frac{p}{2}}. 
\end{equation} 
\end{theorem}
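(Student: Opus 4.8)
The plan is to argue by induction on $n$, using Lemma~\ref{sigman} to express $w_n^{\varepsilon,\delta}$ as the mild solution of a linear SHE with noise coefficient $\sigma_n^{\varepsilon,\delta}$, and then to apply the maximal $L^p$-estimates of Section~\ref{sec-3} (Lemma~\ref{maximalLp-noncons} in the non-conservative case, Lemma~\ref{maximalLp-cons} in the conservative case). The key reduction is therefore to control $\sup_{t,x}\mathbb E|\sigma_n^{\varepsilon,\delta}(t,x)|^p$ (resp.\ $\mathbb E\|\sigma_n^{\varepsilon,\delta}\|_{L^p}^p$) by $C\,(K_i(\delta,d)^{n})^{p/2}$, since then Lemma~\ref{maximalLp-noncons}/\ref{maximalLp-cons} yields $\mathbb E|w_n|^p\le C(\varepsilon K_i(\delta,d)\cdot K_i(\delta,d)^{n})^{p/2}=C(\varepsilon K_i(\delta,d)^{n+1})^{p/2}$, which is exactly \eqref{lln-1}/\eqref{lln-1-cons}. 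For $n=1$, $\sigma_1^{\varepsilon,\delta}=G(u^{\varepsilon,\delta})-G(\bar u^{0,\delta})$, which is bounded (hence $\le C$), giving the base case directly.

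For the inductive step I would split $\sigma_n^{\varepsilon,\delta}$ using the Taylor expansion of $G$ at $\bar u^{0,\delta}$ with $a=u^{\varepsilon,\delta}$, $b=\bar u^{0,\delta}$: write
\[
G(u^{\varepsilon,\delta})=\sum_{l=0}^{n-1}\frac{1}{l!}G^{(l)}(\bar u^{0,\delta})(u^{\varepsilon,\delta}-\bar u^{0,\delta})^{l}+R_n,\qquad |R_n|\le K|u^{\varepsilon,\delta}-\bar u^{0,\delta}|^n,
\]
and substitute $u^{\varepsilon,\delta}-\bar u^{0,\delta}=\sum_{i=1}^{n}\varepsilon^{i/2}\bar u^{i,\delta}+\varepsilon^{n/2}w_n^{\varepsilon,\delta}$ into each power $(u^{\varepsilon,\delta}-\bar u^{0,\delta})^{l}$, expanding by the multinomial theorem. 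Matching powers of $\varepsilon^{1/2}$ against the definition \eqref{eq-2} of $\sigma_n^{\varepsilon,\delta}$ and \eqref{Jkl} of $\mathcal J^{\delta}(k,l)$, the terms involving only the $\bar u^{i,\delta}$ with total $\varepsilon$-degree $\le n-1$ cancel exactly — this is precisely where Lemma~\ref{number} is used, to identify which multinomial exponent tuples $(q_1,\dots)$ correspond to $\Lambda(m,l)$ for $m\le n$ and to see that the higher entries vanish. What remains in $\varepsilon^{(n-1)/2}\sigma_n^{\varepsilon,\delta}$ is: (a) the Taylor remainder $R_n$, of size $\lesssim|u^{\varepsilon,\delta}-\bar u^{0,\delta}|^n$, and (b) a finite sum of monomials in $\bar u^{1,\delta},\dots,\bar u^{n,\delta},w_n^{\varepsilon,\delta}$ each carrying a compensating power of $\varepsilon$ of total degree $\ge n$, together with bounded factors $G^{(l)}(\bar u^{0,\delta})$. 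Taking $L^p(\Omega)$ (resp.\ $L^p(\Omega\times[0,T]\times\mathbb T^d)$) norms, using Hölder to split the monomials, Theorem~\ref{div-speed} (resp.\ Theorem~\ref{div-speed-cons}) to bound $\bar u^{i,\delta}$ by $K_i(\delta,d)^{i/2}$, Lemma~\ref{Lp} (resp.\ its conservative analogue) to bound $u^{\varepsilon,\delta}-\bar u^{0,\delta}$, and the induction hypothesis to bound $w_n^{\varepsilon,\delta}$, all contributions are seen to be $\lesssim (K_i(\delta,d)^{n})^{p/2}$ up to the bookkeeping factor $\varepsilon^{-(n-1)p/2}$ that is already accounted for; the cross-terms containing $w_n^{\varepsilon,\delta}$ come with an extra genuine power of $\big(\varepsilon K_i(\delta,d)\big)^{1/2}\to 0$, so they can be absorbed into the left-hand side for $\varepsilon<\varepsilon_0$ small (this is the role of the threshold $\varepsilon_0$).

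The main obstacle is the combinatorial identification in step (b): verifying that, after substituting and expanding, exactly the terms dictated by $\sum_{l=0}^{m-1}\frac1{l!}G^{(l)}(\bar u^{0,\delta})\mathcal J^{\delta}(m,l)$ for $1\le m\le n$ cancel, and that every surviving monomial $\prod_j(\bar u^{j,\delta})^{q_j}\cdot(w_n^{\varepsilon,\delta})^{q_0}$ carries $\varepsilon$ to a power $\ge n$ (equivalently $\sum_j j q_j + n q_0 \ge n$) with strict inequality whenever $q_0\ge 1$. This is a careful but elementary index computation powered entirely by Lemma~\ref{number}(i)--(ii), which translates the two linear constraints \eqref{inteq}, \eqref{inteq-1} into the needed cancellation pattern. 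Once this is in place, the analytic part is routine: Minkowski, Hölder, and the two maximal-regularity lemmas. Finally, since $p\in[1,2)$ follows from $p=2$ by Hölder (as in Lemma~\ref{maximalLp-noncons}), it suffices to treat $p\ge 2$, for which the $L^p$-isometry of the stochastic integral \cite[Corollary 3.11]{NW} applies cleanly. For the non-conservative $d=1$ case one sets $\delta=0$ throughout and $K_1(0,1)$ is the finite constant $C$ from \eqref{Kdelta}, so no divergence occurs and the same argument goes through verbatim.
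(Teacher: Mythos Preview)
Your overall architecture (induction on $n$, Lemma~\ref{sigman} plus the maximal $L^p$-estimates, Taylor expansion of $G$ and multinomial expansion, cancellation via Lemma~\ref{number}) matches the paper. There are, however, two gaps.

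\textbf{Base case.} Saying that $\sigma_1^{\varepsilon,\delta}=G(u^{\varepsilon,\delta})-G(\bar u^{0,\delta})$ is \emph{bounded} only yields $\sup_{t,x}\mathbb E|w_1^{\varepsilon,\delta}|^p\le C\,K_1(\delta,d)^{p/2}$, which is weaker than the target $(\varepsilon K_1(\delta,d)^2)^{p/2}$ by a factor $(\varepsilon K_1)^{p/2}$. You need the Lipschitz property of $G$ and the bound $\mathbb E|w_0^{\varepsilon,\delta}|^p\lesssim(\varepsilon K_1)^{p/2}$, i.e.\ the case $n=0$ must be established first (this is exactly Step~1 in the paper).

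\textbf{Inductive step: the absorption does not close.} You substitute $u^{\varepsilon,\delta}-\bar u^{0,\delta}=\sum_{i=1}^{n}\varepsilon^{i/2}\bar u^{i,\delta}+\varepsilon^{n/2}w_n^{\varepsilon,\delta}$ into $(u^{\varepsilon,\delta}-\bar u^{0,\delta})^l$ and plan to absorb the $w_n$-terms. But every cross-term containing $w_n$ also contains random factors $\bar u^{i,\delta}$ (or further copies of $w_n$), and to estimate $\mathbb E\big|\prod(\bar u^{i,\delta})^{q_i}\,w_n^{q_0}\big|^p$ you must use H\"older, which produces $\mathbb E|w_n|^{p'}$ for some $p'>p$. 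Thus the inequality you obtain has the form
\[
\sup_{t,x}\mathbb E|w_n|^p \;\le\; C\,K_i^{p/2}\Big[(\varepsilon K_i^{n})^{p/2}\;+\;(\text{small})\cdot\big(\mathbb E|w_n|^{p'}\big)^{p/p'}\Big],\qquad p'>p,
\]
which cannot be absorbed at a fixed $p$. For $n\ge 3$ the $l\ge 2$ terms even produce genuine powers $(w_n)^j$, $j\ge 2$, and neither absorption nor the crude a~priori bound $\mathbb E|\varepsilon^{n/2}w_n|^q\lesssim(\varepsilon K_i)^{q/2}$ closes the estimate (the latter loses exactly a factor $(\varepsilon K_i)^{-p/2}$). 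The phrase ``the induction hypothesis to bound $w_n^{\varepsilon,\delta}$'' is also literally circular: the hypothesis concerns $w_0,\dots,w_{n-1}$, not $w_n$.

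The paper avoids this entirely by a different substitution: for the $l$-th Taylor term it writes
\[
u^{\varepsilon,\delta}-\bar u^{0,\delta}
=\varepsilon^{\frac{k-l}{2}}\Big(w_{k-l}^{\varepsilon,\delta}+\sum_{r=1}^{k-l}\varepsilon^{\frac{r-k+l}{2}}\bar u^{r,\delta}\Big),
\]
i.e.\ it uses the \emph{lower-order} remainder $w_{k-l}$, which is already controlled by the induction hypothesis. This makes every surviving monomial a product of factors that are \emph{all} previously bounded ($\bar u^{r,\delta}$ via Theorem~\ref{div-speed}, $w_{k-l}$ via induction), so H\"older applies cleanly without any self-referential term. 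The combinatorics via Lemma~\ref{number} then shows that the pure-$\bar u$ part of $(\sum_{r=1}^{k-l}\varepsilon^{r/2}\bar u^{r,\delta})^l$ with $\varepsilon$-degree $\le k-1$ is exactly $\sum_{m=l+1}^{k}\varepsilon^{(m-1)/2}\mathcal J^{\delta}(m,l)$ and cancels. This $l$-dependent choice of remainder is the missing idea in your plan.
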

\begin{proof} Due to the structural similarity between two cases, the proof will proceed in a unified way. We will prove (\ref{lln-1}) and (\ref{lln-1-cons}) by induction. Let us first consider the cases $n=0$.

{\bf{Step 1. Induction for $n=0$.}}\quad Regarding the non-conservative case, the mild form of $w_0^{\varepsilon,\delta}$ can be written as
\begin{equation*}
w_0^{\varepsilon,\delta}(t,x)=\varepsilon^{\frac{1}{2}}\int_0^t\langle p(t-s,x-\cdot),G(u^{\varepsilon,\delta}(s,\cdot))dW^{\delta}(s)\rangle.
\end{equation*}
By analogous arguments as in the proof of Lemma \ref{maximalLp-noncons}, Lemma \ref{Lp}, we find that
\begin{align*}
&\sup_{t\in[0,T],x\in\mathbb{T}^d}\mathbb{E}|w^{\varepsilon,\delta}_0(t,x)|^p\\
\leq&C(G,p)\varepsilon^{\frac{p}{2}}\Big|\int_0^t\int_{\mathbb{T}^{2d}}p(t-s,x-y_1)p(t-s,x-y_2)R_{\delta}(y_1-y_2)dy_1dy_2ds\Big|^{\frac{p}{2}}\\
\leq&C(G,p)\Big(\varepsilon K_1(\delta,d)\Big)^{\frac{p}{2}},
\end{align*}
under the scaling regime (\ref{sca-k}), under the assumption $\varepsilon K_1(\delta(\varepsilon),d)\rightarrow0$, as $\varepsilon\rightarrow0$,  
\begin{align*}
\sup_{t\in[0,T],x\in\mathbb{T}^d}\mathbb{E}|w^{\varepsilon,\delta(\varepsilon)}_0(t,x)|^p\leq C(G,p)\Big(\varepsilon K_1(\delta(\varepsilon),d)\Big)^{\frac{p}{2}},
\end{align*}
which implies (\ref{lln-1}) holds for $n=0$. 

Regarding the conservative case, the mild form of $w_0^{\varepsilon,\delta}$ can be written as
\begin{equation*}
w_0^{\varepsilon,\delta}(t)=\int_0^t\nabla S(t-s)G(u^{\varepsilon,\delta}(s))dW^{\delta}(s).
\end{equation*}
Using the same argument of the $L^p$-isometry and the generalization of the Littlewood-Paley inequality, thanks to the $L^p$-estimate (\ref{Lpestimateforu}), with the help of the assumptions on $G$ and $u_0$, we have that for every $p\in(2,\infty)$, 
\begin{align*}
\mathbb{E}\|w^{\varepsilon,\delta(\varepsilon)}_0\|_{L^p([0,T]\times\mathbb{T}^d)}^p\leq&C(p)\varepsilon^{\frac{p}{2}}K_2(\delta(\varepsilon),d)^{\frac{p}{2}}\mathbb{E}\int^T_0\int_{\mathbb{T}^d}|G(u^{\varepsilon,\delta(\varepsilon)}(s,y))|^pdyds\\
\leq&C(G,p)\varepsilon^{\frac{p}{2}}K_2(\delta(\varepsilon),d)^{\frac{p}{2}}.   
\end{align*}
Then using H\"older's inequality to see that (\ref{lln-1}) and (\ref{lln-1-cons}) hold for $n=0$, $p\in[1,\infty)$.

{\bf{Step 2. Induction for $n=k$. }} For every $k\geq1$, and for every $p\in[1,\infty)$, regarding the non-conservative case, by induction hypothesis we have that under the scaling regime  (\ref{sca-k}), 
\begin{equation}\label{aim-noncons}
\sup_{t\in[0,T],x\in\mathbb{T}^d}\mathbb{E}|w^{\varepsilon,\delta(\varepsilon)}_n(t,x)|^p\leq C(G,p,n)(\varepsilon K_1(\delta(\varepsilon),d)^{n+1})^{\frac{p}{2}}
\end{equation}
holds for $n=0,\dots,k-1$.  For the conservative case, by induction hypothesis we have that under the scaling regime (\ref{sca-k-cons}), 
\begin{equation}\label{aim-cons}
\mathbb{E}\|w^{\varepsilon,\delta(\varepsilon)}_n\|_{L^p([0,T]\times\mathbb{T}^d)}^p\leq C(G,p,n)(\varepsilon K_2(\delta(\varepsilon),d)^{n+1})^{\frac{p}{2}}
\end{equation}
holds for $n=0,\dots,k-1$. We aim to deduce both (\ref{aim-noncons}) and (\ref{aim-cons}) hold for $n=k$ as well. By Lemma \ref{sigman}, we have that $w_k^{\varepsilon,\delta}$ satisfies the following mild form
\begin{eqnarray}
w^{\varepsilon,\delta(\varepsilon)}_k(t,x)=\int^t_0\langle p(t-s,x-\cdot), \sigma_k^{\varepsilon,\delta(\varepsilon)}(s,\cdot)dW^{\delta(\varepsilon)}(s)\rangle,
\end{eqnarray}
and 
\begin{eqnarray}
w^{\varepsilon,\delta(\varepsilon)}_k(t)=\int^t_0\nabla S(t-s) \sigma_k^{\varepsilon,\delta(\varepsilon)}(s)dW^{\delta(\varepsilon)}(s),
\end{eqnarray}
respectively, where
\begin{align*}
&\sigma_k^{\varepsilon,\delta(\varepsilon)}(\cdot)\\
=&\varepsilon^{-\frac{k-1}{2}}\Big(G(u^{\varepsilon,\delta(\varepsilon)})-G(\bar{u}^{0,\delta(\varepsilon)})-\Big[\sum_{m=2}^{k-1}\varepsilon^{\frac{m-1}{2}}\Big(\sum_{l=1}^{m-1}\frac{1}{l!}G^{(l)}(\bar{u}^{0,\delta(\varepsilon)})\mathcal{J}^{\delta(\varepsilon)}(m,l)\Big)\Big]\Big)\\
&-\Big[\sum_{l=1}^{k-1}\frac{1}{l!}G^{(l)}(\bar{u}^{0,\delta(\varepsilon)})\mathcal{J}^{\delta(\varepsilon)}(k,l)\Big]\\
=&\varepsilon^{-\frac{k-1}{2}}\Big(G(u^{\varepsilon,\delta(\varepsilon)})-\sum_{l=0}^{k-1}\frac{1}{l!}G^{(l)}(\bar{u}^{0,\delta(\varepsilon)})(u^{\varepsilon,\delta(\varepsilon)}-\bar{u}^{0,\delta(\varepsilon)})^{l}\Big)\\
&+\varepsilon^{-\frac{k-1}{2}}\Big(\sum_{l=1}^{k-1}\frac{1}{l!}G^{(l)}(\bar{u}^{0,\delta(\varepsilon)})(u^{\varepsilon,\delta(\varepsilon)}-\bar{u}^{0,\delta(\varepsilon)})^{l}-\Big[\sum_{m=2}^{k-1}\varepsilon^{\frac{m-1}{2}}\Big(\sum_{l=1}^{m-1}\frac{1}{l!}G^{(l)}(\bar{u}^{0,\delta(\varepsilon)})\mathcal{J}^{\delta(\varepsilon)}(m,l)\Big)\Big]\Big)\\
&-\Big[\sum_{l=1}^{k-1}\frac{1}{l!}G^{(l)}(\bar{u}^{0,\delta(\varepsilon)})\mathcal{J}^{\delta(\varepsilon)}(k,l)\Big]\\
=&I_1^{\varepsilon}+I_2^{\varepsilon}+I_3^{\varepsilon}+I_4^{\varepsilon},
\end{align*}
and $I_i^{\varepsilon}$, $i=1,2,3,4$, are defined by
\begin{align}\label{I1-4}
I_1^{\varepsilon}:=&\varepsilon^{-\frac{k-1}{2}}\Big(G(u^{\varepsilon,\delta(\varepsilon)})-\sum_{l=0}^{k-1}\frac{1}{l!}G^{(l)}(\bar{u}^{0,\delta(\varepsilon)})(u^{\varepsilon,\delta(\varepsilon)}-\bar{u}^{0,\delta(\varepsilon)})^{l}\Big),\notag\\
I_2^{\varepsilon}:=&G^{(1)}(\bar{u}^{0,\delta(\varepsilon)})\Big[\varepsilon^{-\frac{k-1}{2}}\Big(u^{\varepsilon,\delta(\varepsilon)}-\bar{u}^{0,\delta(\varepsilon)}-\sum^{k}_{m=2}\varepsilon^{\frac{m-1}{2}}\mathcal{J}^{\delta(\varepsilon)}(m,1)\Big)\Big],\notag\\
I_3^{\varepsilon}:=&\sum^{k-2}_{l=2}\frac{1}{l!}G^{(l)}(\bar{u}^{0,\delta(\varepsilon)})\Big[\varepsilon^{-\frac{k-1}{2}}\Big((u^{\varepsilon,\delta(\varepsilon)}-\bar{u}^{0,\delta(\varepsilon)})^l-\sum^{k}_{m=l+1}\varepsilon^{\frac{m-1}{2}}\mathcal{J}^{\delta(\varepsilon)}(m,l)\Big)\Big],\notag\\
I_4^{\varepsilon}:=&\frac{1}{(k-1)!}G^{(k-1)}(\bar{u}^{0,\delta(\varepsilon)})\Big[\varepsilon^{-\frac{k-1}{2}}(u^{\varepsilon,\delta(\varepsilon)}-\bar{u}^{0,\delta(\varepsilon)})^{k-1}-\mathcal{J}^{\delta(\varepsilon)}(k,k-1)\Big].
\end{align}

For $k=1$, we set $I_2^{\varepsilon},I_3^{\varepsilon},I_4^{\varepsilon}=0$. For $k=2$, we set $I_2^{\varepsilon},I_3^{\varepsilon}=0$. For $k=3$, we set $I_3^{\varepsilon}=0$.  Otherwise, $I_1^{\varepsilon}, I_2^{\varepsilon}, I_3^{\varepsilon}, I_4^{\varepsilon}$ are well-defined. 

We first proceed with estimating $I_1^{\varepsilon}$. By Taylor expansion, we get $|I_1^{\varepsilon}|\leq C(G)|w^{\varepsilon,\delta(\varepsilon)}_0|^{k}/\varepsilon^{\frac{k-1}{2}}$. Since $\mathcal{J}^{\delta(\varepsilon)}(k,1)=\bar{u}^{k-1,\delta(\varepsilon)}$, for any $k\geq1$, and due to the hypothesis on $G$, we have $|I_2^{\delta(\varepsilon)}|\leq C(G)|w^{\varepsilon,\delta(\varepsilon)}_{k-1}|$. For $I_4^{\delta(\varepsilon)}$, since $\mathcal{J}^{\delta(\varepsilon)}(k,k-1)=(\bar{u}^{1,\delta(\varepsilon)})^{k-1}$, by the hypothesis on $G$ and the Binomial theorem, we have
\begin{align*}
I_4^{\varepsilon}=&\frac{1}{(k-1)!}G^{(k-1)}(\bar{u}^{0,\delta(\varepsilon)})\Big[(w_1^{\varepsilon,\delta(\varepsilon)}+\bar{u}^{1,\delta(\varepsilon)})^{k-1}-(\bar{u}^{1,\delta(\varepsilon)})^{k-1}\Big]\\
=&\frac{1}{(k-1)!}G^{(k-1)}(\bar{u}^{0,\delta(\varepsilon)})\Big[\sum_{l=0}^{k-2}C^l_{k-1}(w_1^{\varepsilon,\delta(\varepsilon)})^{k-1-l}(\bar{u}^{1,\delta(\varepsilon)})^l+(\bar{u}^{1,\delta(\varepsilon)})^{k-1}-(\bar{u}^{1,\delta(\varepsilon)})^{k-1}\Big]\\
=&\frac{1}{(k-1)!}G^{(k-1)}(\bar{u}^{0,\delta(\varepsilon)})\Big[\sum_{l=0}^{k-2}C^l_{k-1}(w_1^{\varepsilon,\delta(\varepsilon)})^{k-1-l}(\bar{u}^{1,\delta(\varepsilon)})^l\Big],
\end{align*}
where $C^l_{k-1}:=\frac{(k-1)!}{l!(k-1-l)!}$. By (\ref{wnn}), we find that 
\begin{align*}
&(u^{\varepsilon,\delta(\varepsilon)}-\bar{u}^{0,\delta(\varepsilon)})^l\\
=&\varepsilon^{\frac{(k-l)l}{2}}\Big(\frac{u^{\varepsilon,\delta(\varepsilon)}-\bar{u}^{0,\delta(\varepsilon)}}{\varepsilon^{\frac{k-l}{2}}}\Big)^l=\varepsilon^{\frac{(k-l)l}{2}}\Big(w^{\varepsilon,\delta(\varepsilon)}_{k-l}+\sum_{r=1}^{k-l}\varepsilon^{\frac{r-k+l}{2}}\bar{u}^{r,\delta(\varepsilon)}\Big)^l\\
=&\varepsilon^{\frac{(k-l)l}{2}}\Big[(w_{k-l}^{\varepsilon,\delta(\varepsilon)})^l+\sum_{m=1}^{l-1}C_m^l\Big[\sum_{r=1}^{k-l}\varepsilon^{\frac{r-k+l}{2}}\bar{u}^{r,\delta(\varepsilon)}\Big]^m\big[w_{k-l}^{\varepsilon,\delta(\varepsilon)}\Big]^{l-m}+\Big(\sum_{r=1}^{k-l}\varepsilon^{\frac{r-k+l}{2}}\bar{u}^{r,\delta(\varepsilon)}\Big)^l\Big]\\
=&\varepsilon^{\frac{(k-l)l}{2}}(w_{k-l}^{\varepsilon,\delta(\varepsilon)})^l
+\sum_{m=1}^{l-1}C_m^l\Big[\sum_{r=1}^{k-l}\varepsilon^{\frac{r}{2}}\bar{u}^{r,\delta(\varepsilon)}\Big]^m\Big[\varepsilon^{\frac{k-l}{2}}w_{k-l}^{\varepsilon,\delta(\varepsilon)}\Big]^{l-m}
+\Big(\sum_{r=1}^{k-l}\varepsilon^{\frac{r}{2}}\bar{u}^{r,\delta(\varepsilon)}\Big)^l.
\end{align*}
Furthermore, using Lemma \ref{number}, we deduce that
\begin{align*}
&\Big(\sum_{r=1}^{k-l}\varepsilon^{\frac{r}{2}}\bar{u}^{r,\delta(\varepsilon)}\Big)^l\\
=&\sum_{q_1+...+q_{k-l}=l}\Big(\frac{l!}{q_1!...q_{k-l}!}\Big)\prod_{1\leq r\leq k-l}(\varepsilon^{\frac{r}{2}}\bar{u}^{r,\delta(\varepsilon)})^{q_r}\\
=&\sum_{m=l+1}^{(k-l)l+1}\sum_{(q_1,...,q_{k-l})\in\Lambda(k,l,m)}\Big(\frac{l!}{q_1!...q_{k-l}!}\Big)\prod_{1\leq r\leq k-l}(\varepsilon^{\frac{r}{2}}\bar{u}^{r,\delta(\varepsilon)})^{q_r}\\
=&\sum_{m=l+1}^{k}\sum_{(q_1,...,q_{m-l})\in\Lambda(m,l)}\Big(\frac{l!}{q_1!...q_{m-l}!}\Big)\prod_{1\leq r\leq m-l}(\varepsilon^{\frac{r}{2}}\bar{u}^{r,\delta(\varepsilon)})^{q_r}\\
&+\sum_{m=k+1}^{(k-l)l+1}\sum_{(q_1,...,q_{k-l})\in\Lambda(k,l,m)}\Big(\frac{l!}{q_1!...q_{k-l}!}\Big)\prod_{1\leq r\leq k-l}(\varepsilon^{\frac{r}{2}}\bar{u}^{r,\delta(\varepsilon)})^{q_r}\\
=&\sum_{m=l+1}^{k}\varepsilon^{\frac{m-1}{2}}\mathcal{J}^{\delta(\varepsilon)}(m,l)+\sum_{m=k+1}^{(k-l)l+1}\varepsilon^{\frac{m-1}{2}}\sum_{(q_1,...,q_{k-l})\in\Lambda(k,l,m)}\Big(\frac{l!}{q_1!...q_{k-l}!}\Big)\prod_{1\leq r\leq k-l}(\bar{u}^{r,\delta(\varepsilon)})^{q_r},
\end{align*}
where we have used the property of $(q_1,\dots,q_{m-l})\in\Lambda(m,l)$, in particular, $\sum_{1\leq r\leq m-l}\frac{rq_r}{2}=\frac{m-1}{2}$. Based on the above two identities, $I_3^{\varepsilon}$ can be rewritten as
\begin{align*}
I_3^{\varepsilon}=&\sum^{k-2}_{l=2}\frac{1}{l!}G^{(l)}(\bar{u}^{0,\delta(\varepsilon)})\Big\{ \varepsilon^{\frac{(k-l)l-(k-1)}{2}}(w_{k-l}^{\varepsilon,\delta(\varepsilon)})^l\\
&+\varepsilon^{-\frac{k-1}{2}}\sum_{m=1}^{l-1}C_m^l\Big[\sum_{r=1}^{k-l}\varepsilon^{\frac{r}{2}}\bar{u}^{r,\delta(\varepsilon)}\Big]^m\Big[\varepsilon^{\frac{k-l}{2}}w_{k-l}^{\varepsilon,\delta(\varepsilon)}\Big]^{l-m}\\
&+\sum_{m=k+1}^{(k-l)l+1}\varepsilon^{\frac{m-k}{2}}\sum_{(q_1,...,q_{k-l})\in\Lambda(k,l,m)}\Big(\frac{l!}{q_1!...q_{k-l}!}\Big)\prod_{1\leq r\leq k-l}(\bar{u}^{r,\delta(\varepsilon)})^{q_r}\Big\}.
\end{align*}
For every $p\in[2,\infty)$, by the estimates of $I_i^{\varepsilon}$, $i=1,2,3,4$, the assumptions on $G$, the $L^p$-estimates for $u^{\varepsilon,\delta(\varepsilon)}$ in Lemma \ref{Lp} and the divergence speed of $\bar{u}^{k,\delta(\varepsilon)}$ in Theorem \ref{div-speed}, we have that 
\begin{align*}
	\mathbb{E}\Big[\Big|\int_0^t\|p(t-s,x-\cdot)(I_1^{\varepsilon}(s,\cdot)+I_2^{\varepsilon}(s,\cdot)+I_3^{\varepsilon}(s,\cdot)+I_4^{\varepsilon}(s,\cdot))\|_{\delta(\varepsilon)}^2ds\Big|^{\frac{p}{2}}\Big]<\infty, 
\end{align*}
and 
\begin{align*}
	\mathbb{E}\Big[\Big|\int_0^t\|\nabla S(t-s)(I_1^{\varepsilon}(s,\cdot)+I_2^{\varepsilon}(s,\cdot)+I_3^{\varepsilon}(s,\cdot)+I_4^{\varepsilon}(s,\cdot))\|_{\delta(\varepsilon)}^2ds\Big|^{\frac{p}{2}}\Big]<\infty, 
\end{align*}
with respect to the non-conservative case and the conservative case, respectively. In the following, we will discuss the non-conservative case and the conservative case separately. 

{\bf Non-conservative case.} Using Lemma \ref{K2} and Lemma \ref{maximalLp-noncons} to see that  
\begin{align*}
&\sup_{t\in[0,T],x\in\mathbb{T}^d}\mathbb{E}|w_{k}^{\varepsilon,\delta(\varepsilon)}(t,x)|^p\\
\leq&K_1(\delta(\varepsilon),d)^{\frac{p}{2}}\Big(\sup_{t\in[0,T],y\in\mathbb{T}^d}\mathbb{E}|I_1^{\delta(\varepsilon)}(t,y)+I_2^{\delta(\varepsilon)}(t,y)+I_3^{\delta(\varepsilon)}(t,y)+I_4^{\delta(\varepsilon)}(t,y)|^p\Big).
\end{align*}
Combining all the previous estimates, taking the supremum over $t$ and $x$, and using (\ref{divkernel}), we reach
\begin{align}\label{1to6}
&\sup_{t\in[0,T],x\in\mathbb{T}^d}\mathbb{E}|w^{\varepsilon,\delta(\varepsilon)}_{k}(t,x)|^p
\leq C(G)K_1(\delta(\varepsilon),d)^{\frac{p}{2}}\Big\{\mathcal{K}_1^{\varepsilon}+\mathcal{K}_2^{\varepsilon}+\mathcal{K}_3^{\varepsilon}+\mathcal{K}_4^{\varepsilon}+\mathcal{K}_5^{\varepsilon}+\mathcal{K}_6^{\varepsilon}\Big\},
\end{align}
where
\begin{align*}
\mathcal{K}_1^{\varepsilon}:=&\varepsilon^{-\frac{(k-1)p}{2}}\sup_{t\in[0,T],y\in\mathbb{T}^d}\mathbb{E}|w^{\varepsilon,\delta(\varepsilon)}_0(t,y)|^{kp},\\
\mathcal{K}_2^{\varepsilon}:=&\sup_{t\in[0,T],y\in\mathbb{T}^d}\mathbb{E}\Big|\sum_{l=0}^{k-2}C^l_{k-1}(w_1^{\varepsilon,\delta(\varepsilon)}(t,y))^{k-1-l}(\bar{u}^{1,\delta(\varepsilon)}(t,y))^l\Big|^{p},\\
\mathcal{K}_3^{\varepsilon}:=&\sup_{t\in[0,T],y\in\mathbb{T}^d}\mathbb{E}|w^{\varepsilon,\delta(\varepsilon)}_{k-1}(t,y)|^{p},\\
\mathcal{K}_4^{\varepsilon}:=&\sum^{k-2}_{l=2}\varepsilon^{\frac{(k-l)l-(k-1)}{2}p}\sup_{t\in[0,T],y\in\mathbb{T}^d}\mathbb{E}|w^{\varepsilon,\delta(\varepsilon)}_{k-l}(t,y)|^{lp},\\
\mathcal{K}_5^{\varepsilon}:=&\sum^{k-2}_{l=2}\varepsilon^{-\frac{p(k-1)}{2}}\Big(\sup_{t\in[0,T],y\in\mathbb{T}^d}\mathbb{E}\Big|\sum_{m=1}^{l-1}C_m^l\Big[\sum_{r=1}^{k-l}\varepsilon^{\frac{r}{2}}\bar{u}^{r,\delta(\varepsilon)}(t,y)\Big]^m\Big[\varepsilon^{\frac{k-l}{2}}w_{k-l}^{\varepsilon,\delta(\varepsilon)}(t,y)\Big]^{l-m}\Big|^{p}\Big),\\
\mathcal{K}_6^{\varepsilon}:=&\sup_{t\in[0,T],y\in\mathbb{T}^d}\mathbb{E}\Big|\sum^{k-2}_{l=2}\sum_{m=k+1}^{(k-l)l+1}\varepsilon^{\frac{m-k}{2}}\sum_{(q_1,...,q_{k-l})\in\Lambda(k,l,m)}\Big(\frac{l!}{q_1!...q_{k-l}!}\Big)\prod_{1\leq r\leq k-l}(\bar{u}^{r,\delta(\varepsilon)}(t,y))^{q_r}\Big|^p.
\end{align*}
In the following, $\mathcal{K}_1^{\varepsilon},\dots, \mathcal{K}_6^{\varepsilon}$ will be estimated one by one.

By the induction hypothesis (\ref{lln-1}) for $n=0,\dots,k-1$, there exists an $\varepsilon_0>0$ such that for every $\varepsilon\in(0,\varepsilon_0)$, we have
\begin{align*}
\mathcal{K}_1^{\varepsilon}&\leq C(G,p,k)(\varepsilon K_1(\delta(\varepsilon),d)^k)^{\frac{p}{2}},\quad 
\mathcal{K}_3^{\varepsilon}\leq C(G,p,k)(\varepsilon K_1(\delta(\varepsilon),d)^k)^{\frac{p}{2}}.
\end{align*}
For $\mathcal{K}_2^{\varepsilon}$, with the help of H\"older's inequality and (\ref{p-inequality}), it follows that
\begin{align*}
\mathcal{K}_2^{\varepsilon}\leq&C(p,k)\sup_{t\in[0,T],y\in\mathbb{T}^d}\sum_{l=0}^{k-2}\mathbb{E}|(w_1^{\varepsilon,\delta(\varepsilon)}(t,y))^{k-1-l}(\bar{u}^{1,\delta(\varepsilon)}(t,y))^l|^p\\
\leq&C(p,k)\sup_{t\in[0,T],y\in\mathbb{T}^d}\sum_{l=0}^{k-2}\Big(\mathbb{E}|w_1^{\varepsilon,\delta(\varepsilon)}(t,y)|^{2p(k-1-l)}\Big)^{\frac{1}{2}}\Big(\mathbb{E}|\bar{u}^{1,\delta(\varepsilon)}(t,y)|^{2pl}\Big)^{\frac{1}{2}}.
\end{align*}
By the induction hypothesis, Theorem \ref{div-speed}, and  (\ref{sca-k}), there exists an $\varepsilon_0>0$ such that for every $\varepsilon\in(0,\varepsilon_0)$, we have that
\begin{align*}
\mathcal{K}_2^{\varepsilon}\leq&C(G,p,k)\sum_{l=0}^{k-2}(\varepsilon K_1(\delta(\varepsilon),d)^2)^{\frac{p(k-1-l)}{2}}K_1(\delta(\varepsilon),d)^{\frac{pl}{2}}\\
\leq&(k-1)(\varepsilon K_1(\delta(\varepsilon),d)^2)^{\frac{p}{2}}K_1(\delta(\varepsilon),d)^{\frac{p(k-2)}{2}}\leq C(G,p,k)(\varepsilon K_1(\delta(\varepsilon),d)^k)^{\frac{p}{2}}.
\end{align*}
For $\mathcal{K}_4^{\varepsilon}$, by the induction hypothesis, we find that
\begin{align*}
\mathcal{K}_4^{\varepsilon}\leq&C(G,p,k)\sum_{l=2}^{k-2}\Big(\varepsilon^{(k-l)l-(k-1)+l}K_1(\delta(\varepsilon),d)^{l(k-l+1)}\Big)^{\frac{p}{2}}\\
\leq&C(G,p,k)\sum_{l=2}^{k-2}\Big(\varepsilon^{(k-l+1)l-k}K_1(\delta(\varepsilon),d)^{(k-l+1)l-k}\Big)^{\frac{p}{2}}\Big(\varepsilon K_1(\delta(\varepsilon),d)^{k}\Big)^{\frac{p}{2}}\\
\leq&C(G,p,k)\Big(\varepsilon K_1(\delta(\varepsilon),d)^{k}\Big)^{\frac{p}{2}}.
\end{align*}
We next focus on $\mathcal{K}_5^{\varepsilon}$. Due to H\"older's inequality and the inequality (\ref{p-inequality}), there exists $\varepsilon_0>0$ such that for every $\varepsilon\in(0,\varepsilon_0)$,
\begin{align*}
\mathcal{K}_5^{\varepsilon}=&\sum^{k-2}_{l=2}\varepsilon^{-\frac{p(k-1)}{2}}\Big(\sup_{t\in[0,T],y\in\mathbb{T}^d}\mathbb{E}\Big|\sum_{m=1}^{l-1}C_m^l\Big[\sum_{r=1}^{k-l}\varepsilon^{\frac{r}{2}}\bar{u}^{r,\delta(\varepsilon)}(t,y)\Big]^m\Big[\varepsilon^{\frac{k-l}{2}}w_{k-l}^{\varepsilon,\delta(\varepsilon)}(t,y)\Big]^{l-m}\Big|^{p}\Big)\\
\leq&C(p,k)\sum^{k-2}_{l=2}\varepsilon^{-\frac{p(k-1)}{2}}\\
&\cdot\Big(\sup_{t\in[0,T],y\in\mathbb{T}^d}\Big|\sum_{m=1}^{l-1}\Big(\mathbb{E}\Big|\sum_{r=1}^{k-l}\varepsilon^{\frac{r}{2}}\bar{u}^{r,\delta(\varepsilon)}(t,y)\Big|^{2mp}\Big)^{\frac{1}{2}}\Big(\mathbb{E}\Big|\varepsilon^{\frac{k-l}{2}}w_{k-l}^{\varepsilon,\delta(\varepsilon)}(t,y)\Big|^{2(l-m)p}\Big)^{\frac{1}{2}}\Big|\Big)\\
\leq&C(p,k)\sum^{k-2}_{l=2}\varepsilon^{-\frac{p(k-1)}{2}}\\
&\cdot\Big(\sup_{t\in[0,T],y\in\mathbb{T}^d}\Big|\sum_{m=1}^{l-1}\Big(\sum_{r=1}^{k-l}\varepsilon^{mpr}\mathbb{E}|\bar{u}^{r,\delta(\varepsilon)}(t,y)|^{2mp}\Big)^{\frac{1}{2}}\Big(\mathbb{E}\Big|\varepsilon^{\frac{k-l}{2}}w_{k-l}^{\varepsilon,\delta(\varepsilon)}(t,y)\Big|^{2(l-m)p}\Big)^{\frac{1}{2}}\Big|\Big).
\end{align*}
Combining with the induction hypothesis and Theorem~\ref{div-speed}, it follows that
\begin{align}\label{K5}
\mathcal{K}_5^{\varepsilon}\leq&C(G,p,k)\sum_{l=2}^{k-2}\varepsilon^{-\frac{p(k-1)}{2}}\sum_{m=1}^{l-1}\Big(\sum_{r=1}^{k-l}\varepsilon^{mpr}K_1(\delta(\varepsilon),d)^{mpr}\Big)^{\frac{1}{2}}\Big(\varepsilon^{\frac{k-l+1}{2}}K_1(\delta(\varepsilon),d)^{\frac{k-l+1}{2}}\Big)^{(l-m)p}\notag\\
=&C(G,p,k)\varepsilon^{-\frac{p(k-1)}{2}}\sum_{l=2}^{k-2}\varepsilon^{\frac{(k-l+1)lp}{2}}K_1(\delta(\varepsilon),d)^{\frac{(k-l+1)lp}{2}}\notag\\
&\sum_{m=1}^{l-1}\Big(\sum_{r=1}^{k-l}\varepsilon^{mpr}K_1(\delta(\varepsilon),d)^{mpr}\Big)^{\frac{1}{2}}\Big(\varepsilon^{\frac{k-l+1}{2}}K_1(\delta(\varepsilon),d)^{\frac{k-l+1}{2}}\Big)^{-mp}.
\end{align}
Due to the scaling regime $(\varepsilon,\delta(\varepsilon))$ we chose in (\ref{sca-k}), there exists $\varepsilon_0>0$, for every $\varepsilon\in(0,\varepsilon_0)$, we have
\begin{align*}
\varepsilon K_1(\delta(\varepsilon),d)<1,\ \ \ \ \ \varepsilon^{\frac{k-l}{2}}K_1(\delta(\varepsilon),d)^{\frac{k-l}{2}}<1.
\end{align*}
It follows that
\begin{align*}
&\sum_{m=1}^{l-1}\Big(\sum_{r=1}^{k-l}\varepsilon^{mpr}K_1(\delta(\varepsilon),d)^{mpr}\Big)^{\frac{1}{2}}\Big(\varepsilon^{\frac{k-l+1}{2}}K_1(\delta(\varepsilon),d)^{\frac{k-l+1}{2}}\Big)^{-mp}\\
\lesssim&\sum_{m=1}^{l-1}\varepsilon^{\frac{mp}{2}}K_1(\delta(\varepsilon),d)^{\frac{mp}{2}}\Big(\varepsilon^{\frac{k-l+1}{2}}K_1(\delta(\varepsilon),d)^{\frac{k-l+1}{2}}\Big)^{-mp}\\
=&\sum_{m=1}^{l-1}\Big(\varepsilon^{\frac{k-l}{2}}K_1(\delta(\varepsilon),d)^{\frac{k-l}{2}}\Big)^{-mp}\\
\lesssim&\Big(\varepsilon^{\frac{k-l}{2}}K_1(\delta(\varepsilon),d)^{\frac{k-l}{2}}\Big)^{-(l-1)p}.
\end{align*}
Combining with the estimate (\ref{K5}), we get
\begin{align*}
\mathcal{K}_5^{\varepsilon}\leq&C(G,p,k)\varepsilon^{-\frac{p(k-1)}{2}}\sum_{l=2}^{k-2}\varepsilon^{\frac{(k-l+1)lp}{2}}K_1(\delta(\varepsilon),d)^{\frac{(k-l+1)lp}{2}}\Big(\varepsilon^{\frac{k-l}{2}}K_1(\delta(\varepsilon),d)^{\frac{k-l}{2}}\Big)^{-(l-1)p}\\
=&C(G,p,k)\varepsilon^{-\frac{p(k-1)}{2}}\sum_{l=2}^{k-2}\varepsilon^{\frac{kp}{2}}K_1(\delta(\varepsilon),d)^{\frac{kp}{2}}\\
\leq&C(G,p,k)\Big(\varepsilon K_1(\delta(\varepsilon),d)^k\Big)^{\frac{p}{2}}.
\end{align*}
Finally, we consider the term $\mathcal{K}_6^{\varepsilon}$. For $k> l\geq0$,
with the help of H\"older's inequality for indices $(p,...,p_{k-l})$ with $\sum_{j=1}^{k-l}\frac{1}{p_j}=1$, the inequality (\ref{p-inequality}), the definitions of $\mathcal{J}^{\delta}(k,l)$, $\Lambda(k,l,m)$ and the induction hypothesis, we conclude that there exists an $\varepsilon_0>0$ such that for every $\varepsilon\in(0,\varepsilon_0)$,
\begin{align*}
\mathcal{K}_6^{\varepsilon}=&\sup_{t\in[0,T],y\in\mathbb{T}^d}\mathbb{E}\Big|\sum^{k-2}_{l=2}\sum_{m=k+1}^{(k-l)l+1}\varepsilon^{\frac{(m-k)}{2}}\sum_{(q_1,...,q_{k-l})\in\Lambda(k,l,m)}(\frac{l!}{q_1!...q_{k-l}!})\prod_{1\leq j\leq k-l}(\bar{u}^{j,\delta(\varepsilon)}(t,y))^{q_j}\Big|^p\\
\leq&C(p,k)\sup_{t\in[0,T],y\in\mathbb{T}^d}\sum^{k-2}_{l=2}\sum_{m=k+1}^{(k-l)l+1}\varepsilon^{\frac{(m-k)p}{2}}\\
&\cdot\Big(\sum_{(q_1,...,q_{k-l})\in\Lambda(k,l,m)}(\frac{l!}{q_1!...q_{k-l}!})^p\prod_{1\leq j\leq k-l}(\mathbb{E}|\bar{u}^{j,\delta(\varepsilon)}(t,y)|^{q_jp_jp})^{\frac{1}{p_j}}\Big)\\
\leq&C(G,p,k)\sum^{k-2}_{l=2}\sum_{m=k+1}^{(k-l)l+1}\varepsilon^{\frac{(m-k)p}{2}}\\
&\cdot\Big(\sum_{(q_1,...,q_{k-l})\in\Lambda(k,l,m)}(\frac{l!}{q_1!...q_{k-l}!})^p\prod_{1\leq j\leq k-l}K_1(\delta(\varepsilon),d)^{\frac{jq_jp}{2}}\Big)\\
\leq&C(G,p,k)\sum^{k-2}_{l=2}\sum_{m=k+1}^{(k-l)l+1}\varepsilon^{\frac{(m-k)p}{2}}K_1(\delta(\varepsilon),d)^{\frac{(m-1)p}{2}}\\
=&C(G,p,k)\sum^{k-2}_{l=2}\sum_{m=k+1}^{(k-l)l+1}\varepsilon^{\frac{(m-k-1)p}{2}}K_1(\delta(\varepsilon),d)^{\frac{(m-1-k)p}{2}}(\varepsilon K_1(\delta(\varepsilon),d)^k)^{\frac{p}{2}}\\
\leq&C(G,p,k)(\varepsilon K_1(\delta(\varepsilon),d)^{k})^{\frac{p}{2}}.
\end{align*}
Combining all the above estimates, by (\ref{1to6}), using the scaling regime (\ref{sca-k}), there exists $\varepsilon_0>0$, such that for every $\varepsilon\in(0,\varepsilon_0)$,
\begin{equation*}
\sup_{t\in[0,T],x\in\mathbb{T}^d}\mathbb{E}|w_k^{\varepsilon,\delta(\varepsilon)}(t,x)|^p\leq C(G,p,k)\Big(\varepsilon K_1(\delta(\varepsilon),d)^{k+1}\Big)^{\frac{p}{2}}.
\end{equation*}
Thus, (\ref{lln-1}) holds for $n=k$, $p\in[2,\infty)$. Moreover, H\"older's inequality implies that (\ref{lln-1}) holds for $n=k$, $p\in[1,2)$. This  completes the proof for the non-conservative case. 

{\bf Conservative case.} Using Lemma \ref{gen-littlewoodpaley} and Lemma \ref{maximalLp-cons} to see that  
\begin{align*}
\mathbb{E}\|w_{k}^{\varepsilon,\delta(\varepsilon)}\|_{L^p([0,T]\times\mathbb{T}^d)}^p\leq&C(p)K_2(\delta(\varepsilon),d)^{\frac{p}{2}}\mathbb{E}\Big\|I_1^{\varepsilon}+I_2^{\varepsilon}+I_3^{\varepsilon}+I_4^{\varepsilon}\Big\|_{L^p([0,T]\times\mathbb{T}^d)}^p,
\end{align*}
where $I_i^{\delta}$, $i=1,2,3,4$, are defined by (\ref{I1-4}) in the same formulation but with conservative coefficients $\bar{u}^{k,\delta}$ and solution $u^{\varepsilon,\delta}$ instead. Combining all the previous estimates, with the same procedure in the nonconservative case, we are able to see that 
\begin{align}\label{1to6-cons}
&\mathbb{E}\|w^{\varepsilon,\delta(\varepsilon)}_{k}\|_{L^p([0,T]\times\mathbb{T}^d)}^p
\leq C(G)K_2(\delta(\varepsilon),d)^{\frac{p}{2}}\Big\{\mathcal{K}_1^{\varepsilon}+\mathcal{K}_2^{\varepsilon}+\mathcal{K}_3^{\varepsilon}+\mathcal{K}_4^{\varepsilon}+\mathcal{K}_5^{\varepsilon}+\mathcal{K}_6^{\varepsilon}\Big\},
\end{align}
where
\begin{align*}
\mathcal{K}_1^{\varepsilon}:=&\varepsilon^{-\frac{(k-1)p}{2}}\mathbb{E}\|(w^{\varepsilon,\delta(\varepsilon)}_0)^k\|_
{L^p([0,T]\times\mathbb{T}^d)}^{p},\\
\mathcal{K}_2^{\varepsilon}:=&\mathbb{E}\Big\|\sum_{l=0}^{k-2}C^l_{k-1}(w_1^{\varepsilon,\delta(\varepsilon)})^{k-1-l}(\bar{u}^{1,\delta(\varepsilon)})^l\Big\|_{L^p([0,T]\times\mathbb{T}^d)}^{p},\\
\mathcal{K}_3^{\varepsilon}:=&\mathbb{E}\|w^{\varepsilon,\delta(\varepsilon)}_{k-1}\|_{L^p([0,T]\times\mathbb{T}^d)}^{p},\\
\mathcal{K}_4^{\varepsilon}:=&\sum^{k-2}_{l=2}\varepsilon^{\frac{(k-l)l-(k-1)}{2}p}\mathbb{E}\|(w^{\varepsilon,\delta(\varepsilon)}_{k-l})^l\|_{L^p([0,T]\times\mathbb{T}^d)}^{p},\\
\mathcal{K}_5^{\varepsilon}:=&\sum^{k-2}_{l=2}\varepsilon^{-\frac{p(k-1)}{2}}\Big(\mathbb{E}\Big\|\sum_{m=1}^{l-1}C_m^l\Big[\sum_{r=1}^{k-l}\varepsilon^{\frac{r}{2}}\bar{u}^{r,\delta(\varepsilon)}\Big]^m\Big[\varepsilon^{\frac{k-l}{2}}w_{k-l}^{\varepsilon,\delta(\varepsilon)}\Big]^{l-m}\Big\|_{L^p([0,T]\times\mathbb{T}^d)}^{p}\Big),\\
\mathcal{K}_6^{\varepsilon}:=&\mathbb{E}\Big\|\sum^{k-2}_{l=2}\sum_{m=k+1}^{(k-l)l+1}\varepsilon^{\frac{m-k}{2}}\sum_{(q_1,...,q_{k-l})\in\Lambda(k,l,m)}\Big(\frac{l!}{q_1!...q_{k-l}!}\Big)\prod_{1\leq j\leq k-l}(\bar{u}^{j,\delta(\varepsilon)})^{q_j}\Big\|_{L^p([0,T]\times\mathbb{T}^d)}^p.
\end{align*}
Then we employ the same argument as in the proof of the non-conservative case, replacing the supremum $\sup_{s\in [0,T],x\in\mathbb{T}^d}$ by the integration over $[0,T]\times\mathbb{T}^d$. Furthermore, compare to the proof of the non-conservative case, applications of H\"older's inequality with respect to integration $\int_{\Omega}d\mathbb{P}$ are  replaced by applications with respect to $\int_0^T\int_{\mathbb{T}^d}\int_{\Omega}d\mathbb{P}dxdt$. This implies that there exists $\varepsilon_0>0$, such that for every $\varepsilon\in(0,\varepsilon_0)$, we have  
\begin{equation*}
\mathcal{K}^{\varepsilon}_i\leq C(G,p,k)\Big(\varepsilon K_2(\delta(\varepsilon),d)\Big)^{\frac{p}{2}}	
\end{equation*}
holds for $i=1,...,6$. Consequently, it follows that 
\begin{equation*}
\mathbb{E}\|w_k^{\varepsilon,\delta(\varepsilon)}\|_{L^p([0,T]\times\mathbb{T}^d)}^p\leq C(G,p,k)\Big(\varepsilon K_2(\delta(\varepsilon),d)^{k+1}\Big)^{\frac{p}{2}}.
\end{equation*}
This completes the proof of the conservative case. 
\end{proof}

\section{Higher order fluctuations for non-smooth coefficients}\label{sec-7}

In this section, we show that the higher order fluctuation expansions hold for the local in time solutions of (\ref{SHE02}) and (\ref{SHE01}) with a non-smooth coefficient $G$. As prototypes, this section considers the coefficients $G(\zeta)=\sqrt{\zeta}$ and $G(\zeta)=\sqrt{\zeta(1-\zeta)}$. Consequently, the corresponding results can be applied to the Dean-Kawasaki equation, SSEP, Dawson-Watanabe equation, and Fleming-Viot equation.

\begin{theorem}\label{sbm2}
Assume that Hypothesis H3 holds for the initial data $u_0$ and the diffusion coefficient $G$ , let $\gamma$ be a fixed suitable constant that appears in Hypothesis H3. Let $\varepsilon, \delta>0$, $n\in\mathbb{N}_+$. In the conservative case, assume that $\varepsilon,\delta$ satisfy (\ref{varepsilonsmall}).  Let $(u^{\varepsilon,\delta},\tau^{\varepsilon,\delta}_{\gamma})$ be the local in time mild solution of  (\ref{SHE02}) (resp.  (\ref{SHE01})), where $\tau^{\varepsilon,\delta}_{\gamma}$ is the $\{\mathcal{F}(t)\}_{t\in[0,T]}$-stopping time defined by 
\begin{equation}\label{stoppingtime-new}
\tau^{\varepsilon,\delta}_{\gamma}:=\inf\Big\{t\in[0,T];{\rm{ess\sup}}_{x\in\mathbb{T}^d}u^{\varepsilon,\delta}(t,x)>K+\gamma,\text{or }{\rm{ess\inf}}_{x\in\mathbb{T}^d}u^{\varepsilon,\delta}(t,x)<K'-\gamma\Big\}\wedge T. 
\end{equation}
Let $K_i(\delta,d), i=1,2$ be defined by (\ref{Kdelta}). \begin{description}
\item[(i)] ({\rm{Non-conservative\ noise}})\quad Assume that 
\begin{equation}\label{localscale-3}
\lim_{\varepsilon\rightarrow0}\varepsilon(\delta(\varepsilon)^{-d}+K_1(\delta(\varepsilon),d)^{n+1})=0.
\end{equation}
Then for almost every $(t,x)\in[0,T]\times\mathbb{T}^d$,
\begin{align*}
\Big|\varepsilon^{-\frac{n}{2}}\Big(u^{\varepsilon,\delta(\varepsilon)}-\sum_{i=0}^{n}\varepsilon^{\frac{i}{2}}\bar{u}^{i,\delta(\varepsilon)}\Big)(t,x)\Big|\rightarrow0,
\end{align*}
in probability, as $\varepsilon\rightarrow0$.

  \item[(ii)] ({\rm{Conservative\ noise}})\quad Assume that 
\begin{equation}\label{localscale-4}
\lim_{\varepsilon\rightarrow0}\varepsilon(\delta(\varepsilon)^{-d-2}+K_2(\delta(\varepsilon),d)^{n+1})=0.
\end{equation}
Then for every $p\in[1,+\infty)$, 
\begin{align*}
\Big\|\varepsilon^{-\frac{n}{2}}\Big(u^{\varepsilon,\delta(\varepsilon)}-\sum_{i=0}^{n}\varepsilon^{\frac{i}{2}}\bar{u}^{i,\delta(\varepsilon)}\Big)\Big\|_{L^p([0,T]\times\mathbb{T}^d)}\rightarrow0,
\end{align*}
in probability, as $\varepsilon\rightarrow0$.
\end{description}
\end{theorem}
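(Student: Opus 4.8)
The plan is to deduce Theorem~\ref{sbm2} from the smooth-coefficient expansion, Theorem~\ref{n-CLT}, by working on the random time interval on which $G$ coincides with the smooth truncation $G_0$ of \eqref{G0}. Two preliminary observations drive the reduction. First, by the maximum principle the heat-equation solution $\bar u^{0,\delta}$ takes values in $[K',K]$ (with $K={\rm ess\sup}\,u_0$, $K'={\rm ess\inf}\,u_0$ as in \eqref{stoppingtime-new}), which is contained in $[K'-\gamma,K+\gamma]$; hence $G^{(l)}(\bar u^{0,\delta})=G_0^{(l)}(\bar u^{0,\delta})$ for all $l$, so the expansion coefficients $\bar u^{i,\delta}$ produced by \eqref{coe} (resp.\ \eqref{ccoe}) are exactly the coefficients one obtains from $G_0$. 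Second, if $\rho^{\varepsilon,\delta}$ denotes the global solution of \eqref{approxeq-2} (resp.\ \eqref{approxeq}) furnished by Lemma~\ref{variationwellposed-2}, then Corollary~\ref{localsolution} gives $u^{\varepsilon,\delta}=\rho^{\varepsilon,\delta}$ on $[0,\tau^{\varepsilon,\delta}_\gamma)$ almost surely, and on the event $\{\tau^{\varepsilon,\delta}_\gamma=T\}$ the extension \eqref{extension-main} plays no role.

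I would then invoke Theorem~\ref{n-CLT} for $\rho^{\varepsilon,\delta(\varepsilon)}$, which solves \eqref{SHE02} (resp.\ \eqref{SHE01}) with the smooth coefficient $G_0$ satisfying Hypothesis~H2, together with the coefficients $\bar u^{i,\delta(\varepsilon)}$. Setting $w^{\varepsilon,\delta}_n=\varepsilon^{-n/2}(\rho^{\varepsilon,\delta}-\sum_{i=0}^n\varepsilon^{i/2}\bar u^{i,\delta})$, the part $\varepsilon K_1(\delta(\varepsilon),d)^{n+1}\to0$ of \eqref{localscale-3} (resp.\ the part $\varepsilon K_2(\delta(\varepsilon),d)^{n+1}\to0$ of \eqref{localscale-4}) makes the right-hand side of \eqref{lln-1} (resp.\ \eqref{lln-1-cons}) tend to $0$, so $\sup_{t\in[0,T],\,x\in\mathbb{T}^d}\mathbb{E}|w^{\varepsilon,\delta(\varepsilon)}_n(t,x)|^p\to0$ (resp.\ $\mathbb{E}\|w^{\varepsilon,\delta(\varepsilon)}_n\|^p_{L^p([0,T]\times\mathbb{T}^d)}\to0$). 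For (i), fix $\eta>0$ and $(t,x)$ with $t<T$; since $u^{\varepsilon,\delta(\varepsilon)}=\rho^{\varepsilon,\delta(\varepsilon)}$ on $\{\tau^{\varepsilon,\delta(\varepsilon)}_\gamma>t\}$,
\[
\mathbb{P}\Big(\big|\varepsilon^{-\frac n2}\big(u^{\varepsilon,\delta(\varepsilon)}-\sum_{i=0}^n\varepsilon^{\frac i2}\bar u^{i,\delta(\varepsilon)}\big)(t,x)\big|>\eta\Big)\le\mathbb{P}\big(\tau^{\varepsilon,\delta(\varepsilon)}_\gamma\le t\big)+\mathbb{P}\big(|w^{\varepsilon,\delta(\varepsilon)}_n(t,x)|>\eta\big).
\]
The first term vanishes in the limit by Lemma~\ref{stopproperty}, whose requirement \eqref{localscale-1} is precisely the remaining term $\varepsilon\delta(\varepsilon)^{-d}$ in \eqref{localscale-3}; the second vanishes by Markov's inequality and the bound just obtained. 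As $\{T\}\times\mathbb{T}^d$ is Lebesgue-null this yields (i) for a.e.\ $(t,x)$.

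For the conservative case (ii) I would argue identically with $\|\cdot\|_{L^p([0,T]\times\mathbb{T}^d)}$ in place of the pointwise supremum: on $\{\tau^{\varepsilon,\delta(\varepsilon)}_\gamma=T\}$ one has $u^{\varepsilon,\delta(\varepsilon)}=\rho^{\varepsilon,\delta(\varepsilon)}$ throughout $[0,T]\times\mathbb{T}^d$, hence there the $L^p$-norm of the rescaled remainder equals $\|w^{\varepsilon,\delta(\varepsilon)}_n\|_{L^p([0,T]\times\mathbb{T}^d)}$, and splitting off $\{\tau^{\varepsilon,\delta(\varepsilon)}_\gamma<T\}$ (whose probability tends to $0$ by Lemma~\ref{stopproperty} and the term $\varepsilon\delta(\varepsilon)^{-d-2}$ in \eqref{localscale-4}) and applying Markov to the $L^p$-norm gives convergence in probability; one also notes that \eqref{varepsilonsmall}, needed to apply Theorem~\ref{n-CLT} in the conservative case, holds for all small $\varepsilon$ since $\varepsilon\delta(\varepsilon)^{-d-2}\to0$ forces $\varepsilon\delta(\varepsilon)^{-d}\to0$. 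The main obstacle — and the reason the hypotheses \eqref{localscale-3}, \eqref{localscale-4} carry the two competing terms $\varepsilon\delta^{-d}$ (resp.\ $\varepsilon\delta^{-d-2}$) and $\varepsilon K_i(\delta,d)^{n+1}$ — is to control simultaneously two unrelated error sources in the joint regime $(\varepsilon,\delta(\varepsilon))$: the probability that the local solution has left $[K'-\gamma,K+\gamma]$ before time $t$, governed by the Moser-iteration $L^\infty$-estimate of Lemma~\ref{lem-moser-1} through $\varepsilon\delta^{-d}$ (resp.\ $\varepsilon\delta^{-d-2}$), and the $n$-th order expansion error for the auxiliary smooth equation, governed by $\varepsilon K_i(\delta,d)^{n+1}$; the delicate point is that Lemma~\ref{lem-moser-1} and Theorem~\ref{n-CLT} impose genuinely different constraints on how fast $\delta(\varepsilon)$ may tend to $0$, and both must be satisfied.
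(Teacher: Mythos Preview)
Your proposal is correct and follows essentially the same approach as the paper's proof: reduce to the smooth-coefficient case by replacing $G$ with the truncation $G_0$, observe that the expansion coefficients $\bar u^{i,\delta}$ are unchanged because $\bar u^{0,\delta}$ stays in $[K',K]$, identify $u^{\varepsilon,\delta}$ with the global solution $\rho^{\varepsilon,\delta}$ of the $G_0$-equation on $[0,\tau^{\varepsilon,\delta}_\gamma)$ via Corollary~\ref{localsolution}, and then split the probability on $\{\tau^{\varepsilon,\delta(\varepsilon)}_\gamma>t\}$ (resp.\ $\{\tau^{\varepsilon,\delta(\varepsilon)}_\gamma=T\}$), handling one piece by Theorem~\ref{n-CLT} plus Chebyshev and the other by Lemma~\ref{stopproperty}. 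Your treatment is in fact slightly more explicit than the paper's on a couple of points (the Lebesgue-null set $\{T\}\times\mathbb{T}^d$, and the observation that \eqref{varepsilonsmall} is automatic for small $\varepsilon$ under \eqref{localscale-4}).
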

\begin{proof}
Let $\rho^{\varepsilon,\delta}$ be the global in time variational solution (resp.\ mild solution) of the approximating equation (\ref{approxeq}) (resp.  (\ref{approxeq-2})) on $[0,T]$, where the smooth diffusion coefficient $G_0$ is defined by (\ref{G0}).
 From the local in time uniqueness of  (\ref{SHE01}) and  (\ref{SHE02}) in Corollary \ref{localsolution}, we have that for every $\delta>0$, let $\varepsilon_0=2\|G_0\|_{L^{\infty}(\mathbb{R})}^{-2}\delta^{-d}$, then for every $\varepsilon\in(0,\varepsilon_0)$, we have $\mathbb{P}$-almost surely, 
\begin{align}\label{coincide}
\rho^{\varepsilon,\delta}(t,x)=u^{\varepsilon,\delta}(t,x)\ 	{\text{for }}a.e. \ x\in\mathbb{T}^d,\ \text{for every }t<\tau^{\varepsilon,\delta}_{\gamma}.
\end{align}
We define 
\begin{align}
\bar{w}^{\varepsilon,\delta}_n:=\varepsilon^{-\frac{n}{2}}\Big(\rho^{\varepsilon,\delta}-\sum_{i=0}^{n}\varepsilon^{\frac{i}{2}}\bar{\rho}^{i,\delta}\Big),\label{wnn-2}
\end{align}
where in the non-conservative case, $\bar{\rho}^{i,\delta}$ is the mild solution of (\ref{coe}) with $G$ replaced by $G_0$, and in the  conservative noise, $\bar{\rho}^{i,\delta}$ is the mild solution of (\ref{ccoe}) with $G$ replaced by $G_0$. Due to the fact that 
\begin{equation*}
	\bar{u}^{0,\delta}(t,x)=p(t)\ast u_0\in\Big[{\rm{ess\inf}}u_0-\gamma,{\rm{ess\sup}}u_0+\gamma\Big],
\end{equation*}
together with the fact that $G_0(\zeta)=G(\zeta)$ for $\zeta\in\Big[{\rm{ess\inf}}u_0-\gamma,{\rm{ess\sup}}u_0+\gamma\Big]$, it follows that $\bar{\rho}^{i,\delta}=\bar{u}^{i,\delta}$, for $i=1,...,n-1$. 
Let $\delta>0$, and let $w^{\varepsilon,\delta}_n$ be defined by (\ref{wnn}). Thanks to  (\ref{coincide}),  we are able to see that there exists $\varepsilon_0>0$ such that for every $\varepsilon\in(0,\varepsilon_0)$, $n\in\mathbb{N}$, we have $\mathbb{P}$-almost surely, 
\begin{align}\label{coincide-2}
\bar{w}^{\varepsilon,\delta}_n(t,x)=w^{\varepsilon,\delta}_n(t,x)\ 	{\text{for }}a.e.\ x\in\mathbb{T}^d,\ {\text{for every }}t<\tau^{\varepsilon,\delta}_{\gamma}.
\end{align}
Regarding the nonconservative case, by the definition of $\bar{w}^{\varepsilon,\delta}_n$ by (\ref{wnn-2}), for almost every $(t,x)\in[0,T]\times\mathbb{T}^d$, and for $\gamma'>0$, it follows that 
\begin{align*}
&\mathbb{P}\Big(|w_n^{\varepsilon,\delta}(t,x)|>\gamma'\Big)\\
=&\mathbb{P}\Big(|w_n^{\varepsilon,\delta}(t,x)|>\gamma',\tau^{\varepsilon,\delta}_{\gamma}> t\Big)+\mathbb{P}\Big(|w_n^{\varepsilon,\delta}(t,x)|>\gamma',\tau^{\varepsilon,\delta}_{\gamma}\leq t\Big)\\
\leq&\mathbb{P}\Big(|\bar{w}_n^{\varepsilon,\delta}(t,x)|>\gamma'\Big)+\mathbb{P}\Big(\tau^{\varepsilon,\delta}_{\gamma}\leq t\Big).
\end{align*}
Regarding the conservative case, it follows that 
\begin{align*}
&\mathbb{P}\Big(\|w_n^{\varepsilon,\delta}\|_{L^p([0,T]\times\mathbb{T}^d)}>\gamma'\Big)\\
=&\mathbb{P}\Big(\|w_n^{\varepsilon,\delta}\|_{L^p([0,T]\times\mathbb{T}^d)}>\gamma',\tau^{\varepsilon,\delta}_{\gamma}> T'\Big)+\mathbb{P}\Big(\|w_n^{\varepsilon,\delta}\|_{L^p([0,T]\times\mathbb{T}^d)}>\gamma',\tau^{\varepsilon,\delta}_{\gamma}\leq T'\Big)\\
\leq&\mathbb{P}\Big(\|\bar{w}_n^{\varepsilon,\delta}\|_{L^p([0,T]\times\mathbb{T}^d)}>\gamma'\Big)+\mathbb{P}\Big(\tau^{\varepsilon,\delta}_{\gamma}\leq T'\Big).
\end{align*}
Since $\bar{w}^{\varepsilon,\delta}_n$ is the remainder for the fluctuation expansion of $\rho^{\varepsilon,\delta}$, it follows from Theorem \ref{n-CLT} and Chebyshev's inequality that
\begin{equation*}
\lim_{\varepsilon\rightarrow0}\mathbb{P}(|\bar{w}_n^{\varepsilon,\delta(\varepsilon)}(t,x)|>\gamma')=0,\ \text{for almost every } (t,x)\in[0,T]\times\mathbb{T}^d, 
\end{equation*}
for the nonconservative case, and 
\begin{equation*}
\lim_{\varepsilon\rightarrow0}\mathbb{P}(\|\bar{w}_n^{\varepsilon,\delta(\varepsilon)}\|_{L^p([0,T]\times\mathbb{T}^d)}>\gamma')=0, 
\end{equation*}
for the conservative case. Furthermore, combining with Lemma \ref{stopproperty}, we have that
\begin{equation*}
\lim_{\varepsilon\rightarrow0}\Big[\mathbb{P}(\tau^{\varepsilon,\delta(\varepsilon)}_{\gamma}\leq T)+\mathbb{P}(\tau^{\varepsilon,\delta(\varepsilon)}_{\gamma}\leq t)\Big]=0, 
\end{equation*}
for every $t\in[0,T]$. This completes the proof. 
\end{proof}

\section{Applications}\label{sec-8}

In this section, we present several applications of Theorem \ref{sbm2} to interacting particle systems. The result of Theorem \ref{sbm2} for non-conservative noise applied to Dawson-Watanabe equation and the Fleming-Viot process yields the following results.

\begin{example}[Correlated Dawson-Watanabe equation] 
Let $d\geq1$, $n\in\mathbb{N}_+$. Assume that there is an $a>0$ so that $u_0\in L^{\infty}(\mathbb{T}^d;[a,\infty))$. Let $\varepsilon,\delta>0$, and $W^{\delta}$ be defined by (\ref{smooth-1}). Consider the correlated Dawson-Watanabe equation
\begin{equation}\label{SBM-2}
du^{\varepsilon,\delta}=\Delta u^{\varepsilon,\delta}dt+\varepsilon^{\frac{1}{2}}\sqrt{u^{\varepsilon,\delta}}dW^{\delta}(t),\quad u^{\varepsilon,\delta}(0)=u_0, 
\end{equation}
where $W^{\delta}$ is an infinite dimensional Brownian motion with spatial correlation length $\delta$. Then for every $\gamma\in(0,a)$, there exists a unique local in time mild solution $(u^{\varepsilon,\delta}(t))_{t\in[0,\tau^{\varepsilon,\delta}_{\gamma}]}$ with $\tau^{\varepsilon,\delta}_{\gamma}$ defined by (\ref{stoppingtime-new}). Let $K_i(\delta,d), i=1,2$ be defined by (\ref{Kdelta}), and assume that 
\begin{align}\label{scale-new-1}
\lim_{\varepsilon\rightarrow0}\varepsilon(\delta(\varepsilon)^{-d}+K_1(\delta(\varepsilon),d)^{n+1})=0. 
\end{align}
Then for every $(t,x)\in[0,T]\times\mathbb{T}^d$,
 $u^{\varepsilon,\delta}$ satisfies
\begin{align}\label{res-2}
\Big|\varepsilon^{-\frac{n}{2}}\Big(u^{\varepsilon,\delta(\varepsilon)}-\sum_{i=0}^{n}\varepsilon^{\frac{i}{2}}\bar{u}^{i,\delta(\varepsilon)}\Big)(t,x)\Big|\rightarrow0,
\end{align}
in probability, as $\varepsilon\rightarrow0$. 
\end{example}

\begin{example}[Correlated Fleming-Viot equation]
Let $d\geq1$, $n\in\mathbb{N}_+$. Assume that there is $a>0$ so that $u_0\in L^{\infty}(\mathbb{T}^d;[a,\infty))$.  Let $\varepsilon,\delta>0$, and $W^{\delta}$ be defined by (\ref{smooth-1}). Consider the correlated Fleming-Viot SPDE 
\begin{equation*}
du^{\varepsilon,\delta}=\Delta u^{\varepsilon,\delta}dt+\varepsilon^{\frac{1}{2}}\sqrt{u^{\varepsilon,\delta}(1-u^{\varepsilon,\delta})}dW^{\delta}(t),\quad
u^{\varepsilon,\delta}(0)=u_0.
\end{equation*}
Then for every $\gamma\in(0,a)$, there exists a unique local in time mild solution $(u^{\varepsilon,\delta},\tau^{\varepsilon,\delta}_{\gamma})$ with $\tau^{\varepsilon,\delta}_{\gamma}$ defined by (\ref{stoppingtime-new}). Assume that $(\varepsilon,\delta(\varepsilon))$ satisfies (\ref{scale-new-1}). Then $u^{\varepsilon,\delta}$ satisfies the expansion formula (\ref{res-2}).
\end{example}

The result of Theorem \ref{sbm2} for conservative noise applied to the symmetric simple exclusion process and Dean-Kawasaki equation yields the following results. 

\begin{example}[Symmetric simple exclusion process]
Let $d\geq1$, $n\in\mathbb{N}_+$. Assume that there is $a\in(0,\frac{1}{2})$ so that $u_0\in L^{\infty}(\mathbb{T}^d;[a,1-a])$. Let $\varepsilon,\delta>0$, and let $W^{\delta}$ be defined by (\ref{smooth-1}). Consider the solution to the fluctuating continuum model for the symmetric simple exclusion process 
\begin{equation}\label{SSEP-2}
du^{\varepsilon,\delta}=\Delta u^{\varepsilon,\delta}dt+\varepsilon^{\frac{1}{2}}\nabla\cdot\Big(\sqrt{u^{\varepsilon,\delta}(1-u^{\varepsilon,\delta})}dW^{\delta}(t)\Big),\quad u^{\varepsilon,\delta}(0)=u_0. 
\end{equation}
Then for every $\gamma\in(0,a)$, let $\varepsilon,\delta$ satisfy (\ref{varepsilonsmall}), there exists a unique local in time mild solution $(u^{\varepsilon,\delta},\tau^{\varepsilon,\delta}_{\gamma})$ with $\tau^{\varepsilon,\delta}_{\gamma}$ defined by (\ref{stoppingtime-new}). Assume that 
\begin{align}\label{scale-new-2}
\lim_{\varepsilon\rightarrow0}\varepsilon (\delta(\varepsilon)^{-d-2}+\delta(\varepsilon)^{-(n+1)d})=0.
\end{align}
Then $u^{\varepsilon,\delta}$ satisfies 
\begin{align}\label{res-2-cons}
\Big\|\varepsilon^{-\frac{n}{2}}\Big(u^{\varepsilon,\delta(\varepsilon)}-\sum_{i=0}^{n}\varepsilon^{\frac{i}{2}}\bar{u}^{i,\delta(\varepsilon)}\Big)\Big\|_{L^p([0,T]\times\mathbb{T}^d)}\rightarrow0,
\end{align}
in probability, as $\varepsilon\rightarrow0$. 
\end{example}

\begin{example}[Correlated Dean-Kawasaki equation]
Let $d\geq1$, $n\in\mathbb{N}_+$. Assume that there is $a\in(0,\frac{1}{2})$ so that $u_0\in L^{\infty}(\mathbb{T}^d;[a,1-a])$.  Let $\varepsilon,\delta>0$, and $W^{\delta}$ be defined by (\ref{smooth-1}). Consider the correlated Dean-Kawasaki equation
\begin{equation*}
du^{\varepsilon,\delta}=\Delta u^{\varepsilon,\delta}dt+\varepsilon^{\frac{1}{2}}\nabla\cdot(\sqrt{u^{\varepsilon,\delta}}dW^{\delta}(t)),\quad u^{\varepsilon,\delta}(0)=u_0. 
\end{equation*}
Then for every $\gamma\in(0,a)$, let $\varepsilon,\delta$ satisfy (\ref{varepsilonsmall}), there exists a unique local in time mild solution $(u^{\varepsilon,\delta},\tau^{\varepsilon,\delta}_{\gamma})$ with $\tau^{\varepsilon,\delta}_{\gamma}$ defined by (\ref{stoppingtime-new}). Assume that $(\varepsilon,\delta(\varepsilon))$ satisfies (\ref{scale-new-2}). Then $u^{\varepsilon,\delta}$ satisfies the expansion formula (\ref{res-2-cons}).
\end{example}

\noindent{\bf  Acknowledgements}\quad Benjamin Gess acknowledges support by the Max Planck Society through the Research Group ``Stochastic Analysis in the Sciences". This work was funded by the Deutsche Forschungsgemeinschaft (DFG, German Research Foundation) via IRTG 2235 - Project Number 282638148, and cofunded by the European Union (ERC, FluCo, grant agreement No. 101088488). Views and opinions expressed are however those of the author(s)only and do not necessarily reflect those of the European Union or of the European Research Council. Neither the European Union nor the granting authority can be held responsible for them. Rangrang Zhang acknowledges support by Beijing Natural Science Foundation (No. 1212008), National Natural Science Foundation of China (No. 12171032), Beijing Institute of Technology Research Fund Program for Young Scholars and MIIT Key Laboratory of Mathematical Theory, Computation in Information Security.

We thank Vitalii Konarovskyi for discussion and for carefully reading a preliminary version of the manuscript.

\bibliographystyle{alpha}
\bibliography{expansionlib}

\begin{flushleft}
\small \normalfont
\textsc{Benjamin Gess\\
Faculty of Mathematics, University of Bielefeld\\
33615 Bielefeld, Germany \\
and\\
Max--Planck--Institut f\"ur Mathematik in den Naturwissenschaften\\
04103 Leipzig, Germany.}\\
\texttt{\textbf{bgess@math.uni-bielefeld.de}}
\end{flushleft}

\begin{flushleft}
\small \normalfont
\textsc{Zhengyan Wu\\
Department of Mathematics, University of Bielefeld, D-33615 Bielefeld, Germany.}\\
\texttt{\textbf{zwu@math.uni-bielefeld.de}}
\end{flushleft}

\begin{flushleft}
\small \normalfont
\textsc{Rangrang Zhang\\
School of Mathematics and Statistics,
Beijing Institute of Technology, Beijing, 100081, China.}\\
\texttt{\textbf{rrzhang@amss.ac.cn}}
\end{flushleft}

\end{document}